\documentclass[11pt]{article}

\usepackage[T1]{fontenc}
\usepackage[utf8]{inputenc}
\usepackage{amsmath,amssymb,amsthm}
\usepackage{geometry}
\usepackage[numbers,sort&compress]{natbib}
\usepackage[hidelinks]{hyperref}

\newtheorem{theorem}{Theorem}[section]
\newtheorem{proposition}[theorem]{Proposition}
\newtheorem{lemma}[theorem]{Lemma}

\begin{document}

\title{Information geometry of emergent symmetry quotients and their weak unfoldings}

\author{
Arnaud Coatanhay\thanks{\texttt{arnaud.coatanhay@ensta.fr}}
\and
Ang\'elique Dr\'emeau\thanks{\texttt{angelique.dremeau@ensta.fr}}
}

\date{}

\maketitle

\begin{center}
\small
Lab-STICC, UMR CNRS 6285, ENSTA, Institut Polytechnique de Paris\\
2 rue Fran\c{c}ois Verny, 29806 Brest Cedex 9, France
\end{center}

\begin{abstract}
A regular observed statistical model may converge to a limit in which a
previously identifiable signed parameter becomes identifiable only modulo a
reflection. We study the local information geometry of this transition.
For a twice differentiable Hellinger embedding with an exact limiting
reflection, the observed displacement is forced into the two-jet form
\(\varepsilon\lambda J_-+\lambda^2J_+/2\), up to higher-order terms.
The mixed jet restores the sign away from the symmetric face, whereas the even
jet is the first tangent inherited by the quotient. After nuisance elimination, a positive Gram determinant yields a
nondegenerate cross-cap two-jet.  The associated local asymptotic theory has
three regimes governed by \(\tau_n=\sqrt n\,\varepsilon_n^2\): regular
signed LAN, a critical curved Gaussian subexperiment, and a quotient regime
with the \(n^{-1/4}\) signed scale. We prove that the same parabolic critical
experiment persists for predictive likelihoods along a single stationary
dependent trajectory.
The limiting quotient has a regular Fisher metric in the invariant
coordinate, while its pullback degenerates in the signed coordinate. For a
solvable CIR--OU benchmark motivated by coherent sea-clutter observations, we
derive the quotient Fisher metric and curvature explicitly and show that the
curvature is strictly negative.  We also determine the restricted holonomy of
the full Amari family: \(\operatorname{Hol}_0(\nabla^{(a)})=SO(2)\) for
\(a=0\), whereas \(\operatorname{Hol}_0(\nabla^{(a)})=GL^+(2,\mathbb R)\)
for \(a\neq0\). The results separate the intrinsic geometry of the limiting quotient from the
transverse geometry of its weak unfolding.
\end{abstract}

\noindent\textbf{Keywords:}
information geometry; singular statistical models; symmetry quotient;
weak symmetry breaking; local asymptotic normality; Fisher--Amari geometry;
holonomy; partially observed diffusions

\section{Introduction}
\label{sec:introduction}

A statistical parameter can become non-identifiable not because the original
model was singular, but because a symmetry emerges after partial observation
in a limiting regime.  We study such a transition: a regular observed family
approaches an experiment in which a signed parameter is identifiable only
modulo reflection.  The central questions are how the regular family meets the
identifiable quotient, what local asymptotic experiment resolves the crossover,
and which geometric data belong to the quotient rather than to its unfolding.

Information loss under a statistic or observation channel is naturally measured
through Fisher contraction and sufficiency, with recent work also quantifying
approximate sufficiency \cite{AyJostLeSchwachhofer2017,YamaguchiNozawa2024}.
Here the issue is not only the amount of contraction: partial observation changes
the identifiable symmetry type itself in the limit.

Singular Fisher information and non-standard rates have substantial existing
theories.  Rank-deficient statistical models admit reduced Fisher structures
\cite{JostLeSchwachhofer2017,Le2020}; broader singular learning theory treats
non-identifiable models whose Fisher matrices lose rank \cite{Watanabe2009}.
Singular information can change likelihood asymptotics and convergence rates
\cite{RotnitzkyCoxBottaiRobins2000,Drton2009,HallinLey2014,EkvallBottai2022}.
Quotient identifiability and local Gaussian asymptotics also arise in quantum
Markov models, where identifiable parameters are group orbits and may form
stratified orbifold-like spaces \cite{GutaKiukas2017,GirottiKiukasGuta2026}.
The distinction here is that the quotient is not present from the start:
it emerges as the limit of a regular observed experiment.

A solvable realization is supplied by the partially observed CIR--OU
texture--coherence model motivated by coherent sea clutter.  Its process-level
geometry and observability are treated separately in
\cite{CoatanhayDremeau2026}; here it serves only as a nontrivial statistical
experiment for which the required jets can be computed explicitly.  With
$\varepsilon=\alpha^{-1/2}$ denoting the weak-texture scale and $\lambda$ a
signed coupling, the strict limit satisfies
\begin{equation}
  P^Y_{0,\lambda}=P^Y_{0,-\lambda}.
  \label{eq:intro-reflection}
\end{equation}
Thus the ordinary $\lambda$ score vanishes at the fixed point, whereas the
invariant coordinate
\begin{equation}
  \mu=\frac{\lambda^2}{2}
  \label{eq:intro-mu}
\end{equation}
retains strictly positive efficient Fisher information.  The limiting model is
therefore singular in the signed parametrization but regular on its identifiable
quotient.

The first main result is an observed Hellinger normal form.  If
$\Phi(\varepsilon,\lambda)=2\sqrt{p_{\varepsilon,\lambda}}$ is twice
Fr\'echet differentiable and
$\Phi(0,\lambda)=\Phi(0,-\lambda)$, then
\begin{equation}
\begin{split}
  2\sqrt{p_{\varepsilon,\lambda}}
  -2\sqrt{p_{\varepsilon,0}}
  ={}&
  \sqrt{p_0}
  \left(
    \varepsilon\lambda J_-
    +\frac{\lambda^2}{2}J_+
  \right)
  +o_{L^2}(\varepsilon|\lambda|+\lambda^2).
\end{split}
  \label{eq:intro-normal-form}
\end{equation}
The mixed jet $J_-$ restores orientation away from the symmetric face; the even
jet $J_+$ is the first tangent inherited by the quotient.  A hidden involution
with an anti-invariant first perturbation provides one sufficient latent
mechanism, but the theorem itself is stated at the observed level.
After nuisance projection, linear independence of
$J_-^\perp$ and $J_+^\perp$ produces the effective two-jet
\begin{equation}
  (\varepsilon,\lambda)
  \longmapsto
  \left(
    \varepsilon,\varepsilon\lambda,\frac{\lambda^2}{2}
  \right),
  \label{eq:intro-crosscap}
\end{equation}
which is the classical cross-cap normal form at second order
\cite{GolubitskyGuillemin1973}.  We use the terminology only as a local
organizer; related singularity-theoretic methods already occur in information
geometry \cite{KabataMatsumotoUchidaUeki2025}.

The same two jets determine the local asymptotic resolution.  In the
triangular array the relevant map is
\begin{equation}
  \sqrt n
  \left(
    \varepsilon_n\lambda_n,\frac{\lambda_n^2}{2}
  \right),
  \qquad
  \tau_n=\sqrt n\,\varepsilon_n^2.
  \label{eq:intro-resolution}
\end{equation}
Standard Hellinger/LAN theory provides the ambient Gaussian shift framework
\cite{LeCamYang2000,vanDerVaart1998}; the geometry of the resolution map then
produces three restrictions.  When $\tau_n\to\infty$, one recovers regular
signed LAN.  When $\tau_n\to0$, the regular local coordinate is $\mu$ and the
signed scale is $n^{-1/4}$.  At the critical scale
$\tau_n\to\tau\in(0,\infty)$, the limit is a curved Gaussian subexperiment
with mean curve
\begin{equation}
  h_\zeta^\perp
  =
  \zeta J_-^\perp
  +\frac{\zeta^2}{2}J_+^\perp.
  \label{eq:intro-critical-parabola}
\end{equation}
Its nonzero statistical curvature is controlled by the efficient Gram
determinant in the sense of Efron's curved-family geometry
\cite{Efron1975}.
For the CIR--OU benchmark,
$\tau_n=\sqrt n/\alpha_n$, so the critical layer is
$\alpha_n\asymp\sqrt n$.

The crossover is not restricted to independent repetitions.  Predictive
factorization turns the odd and even jets into martingale differences.  Under a
uniform remainder estimate and a predictable Gram limit, the same parabolic
Gaussian experiment survives along one stationary trajectory.  Memory changes
the metric of the critical plane, not its two-jet form or its $\sqrt n$ scale.
This sits alongside, but is distinct from, information-geometric treatments of
Markov kernels themselves \cite{WolferWatanabe2021}; the probabilistic input is
a martingale-array limit theorem \cite{HallHeyde1980}.

The second geometric layer belongs to the exact quotient.  The reflection
relation fails ordinary Godement regularity at the fixed locus, while the
coarse identifiable space is the manifold with boundary
$Q=\{(\beta,\mu):\mu\ge0\}$.  Invariant theory explains why $\mu$ is the
first normal coordinate, and the Fisher metric extends non-degenerately to the
boundary.  This quotient geometry is not determined by the cross-cap two-jet:
the first quotient tangent occurs at order $\lambda^2$, intrinsic Fisher
curvature requires the order-$\lambda^4$ jet, and the complete holonomy
certificate reaches order $\lambda^6$.

For the solvable benchmark we compute these higher even jets explicitly.  The
one-sided boundary value of the Fisher Gaussian curvature is strictly negative
and independent of the slow sampling parameter after normalization.  On the
regular interior, the full Amari family has restricted holonomy
\begin{equation}
  \operatorname{Hol}_0(\nabla^{(a)})
  =
  \begin{cases}
    SO(2),&a=0,\\
    GL^+(2,\mathbb R),&a\neq0.
  \end{cases}
  \label{eq:intro-holonomy}
\end{equation}
The algebraic certificate for the non-metric connections is reproduced in the
Supplementary Material.

The paper is organized to keep these two layers separate.  Section~2 introduces
the solvable observed experiment.  Sections~3--4 identify the reflection
quotient and its weak unfolding.  Sections~5--6 derive the independent and
predictive asymptotic crossovers.  Section~7 proves that quotient and unfolding
jets contain different information.  Section~8 develops the intrinsic
Fisher--Amari geometry.  Technical asymptotic and model-specific calculations
are kept in the appendices, while the longest algebraic and uniform-remainder
certificates are placed in the Supplementary Material.

\section{A solvable partially observed experiment}
\label{sec:benchmark}

This section isolates, in a concrete model, the statistical phenomenon
treated abstractly later.
We retain only the ingredients needed for the information-geometric analysis.
The process-level geometry and observability properties of the broader
texture--coherence class are discussed separately in
\cite{CoatanhayDremeau2026}; the present paper concerns the family of observed
probability laws.

\subsection{Texture--coherence dynamics and normalized coupling}
\label{subsec:benchmark-dynamics}

We start from Field's CIR--OU benchmark
\cite{FieldHaykin2008,Field2008} and embed it into a one-parameter family in
which the fast relaxation rate is allowed to depend on the texture.
The constant-rate model is recovered at $\lambda=0$; the state-dependent
family introduced below is the statistical deformation studied in this paper.
The positive texture process satisfies
\begin{equation}
  dx_t
  =
  A(1-x_t)\,dt
  +
  \sqrt{\frac{2A}{\alpha}x_t}\,dW^x_t,
  \qquad
  A>0,\quad \alpha>0,
  \label{eq:cir-texture}
\end{equation}
and the complex coherence variable
\(
  \gamma_t=\gamma_t^{R}+i\gamma_t^{I}
\)
has two independent real quadratures whose relaxation rate is allowed to depend
on the texture,
\begin{align}
  d\gamma_t^{R}
  &=
  -\frac{B_\lambda(r_t)}{2}\gamma_t^{R}\,dt
  +
  \sqrt{\frac{B_\lambda(r_t)}{2}}\,dW_t^{R},
  \label{eq:coupled-ou-R}
  \\
  d\gamma_t^{I}
  &=
  -\frac{B_\lambda(r_t)}{2}\gamma_t^{I}\,dt
  +
  \sqrt{\frac{B_\lambda(r_t)}{2}}\,dW_t^{I}.
  \label{eq:coupled-ou-I}
\end{align}
The Brownian drivers are independent.
The coherent observation is
\begin{equation}
  \Psi_t=\sqrt{x_t}\,\gamma_t.
  \label{eq:coherent-observation}
\end{equation}

For later use, introduce the Lamperti coordinate
\begin{equation}
  r=\sqrt{\frac{2\alpha x}{A}}
  \label{eq:lamperti-r}
\end{equation}
and denote by $\pi_\alpha$ the stationary law of $r$ induced by the CIR
stationary distribution.  Let
\begin{equation}
  z_\alpha(r)
  =
  \frac{r-\mathbb E_{\pi_\alpha}[r]}
       {\sqrt{\operatorname{Var}_{\pi_\alpha}(r)}}
  \label{eq:standardized-r}
\end{equation}
be its stationary standardized version.
We consider the normalized exponential family
\begin{equation}
  B_{\lambda}^{(\alpha)}(r)
  =
  B_0\,
  \frac{\exp\!\bigl(\lambda z_\alpha(r)\bigr)}
       {\mathbb E_{\pi_\alpha}
        [\exp(\lambda z_\alpha)]},
  \qquad B_0>0.
  \label{eq:normalized-coupling}
\end{equation}
By construction,
\begin{equation}
  \mathbb E_{\pi_\alpha}
  [B_{\lambda}^{(\alpha)}]
  =
  B_0.
  \label{eq:normalized-rate}
\end{equation}
At $\lambda=0$ one recovers the standard Field model with constant fast rate
$B_0$.

The normalization in \eqref{eq:normalized-coupling} separates the mean fast
relaxation rate from its state dependence.
For each fixed texture state, the OU generators
\eqref{eq:coupled-ou-R}--\eqref{eq:coupled-ou-I} have invariant law
\(
  N(0,1/2)
\)
for each quadrature, independently of the value of the positive rate.
Consequently, the product measure
\begin{equation}
  x\sim \Gamma(\alpha,\text{rate }\alpha),
  \qquad
  \gamma^{R},\gamma^{I}\stackrel{\mathrm{iid}}{\sim}N(0,1/2),
  \label{eq:stationary-law}
\end{equation}
is invariant for every $\lambda$; under the stationary experiment used below,
texture and coherence are therefore independent at a fixed time.
In particular, the one-time stationary law of $\Psi=\sqrt{x}\gamma$ is
independent of $\lambda$.
All information on the coupling is therefore temporal.

\subsection{The observed transition experiment}
\label{subsec:observed-transition}

Fix a sampling interval $\Delta t>0$ and write
\begin{equation}
  Y_k
  =
  \begin{pmatrix}
    \Re\Psi_{k\Delta t}\\
    \Im\Psi_{k\Delta t}
  \end{pmatrix}
  \in\mathbb R^2.
  \label{eq:sampled-observation}
\end{equation}
The basic finite-dimensional experiment used below is the stationary observed
two-time law of
\(
  (Y_0,Y_1)
\).
Equivalently, because the stationary one-time law of $Y_0$ is independent of
$(\beta,\lambda)$ in the normalized family, parameter scores may be computed
from the conditional two-time density of $Y_1$ given $Y_0$.
No Markov property of the marginal process $(Y_k)$ is assumed here.
Independent repetitions of the stationary pair experiment will be used in the
local triangular-array analysis, whereas a single long stationary sequence
\(
  (Y_k)_{k\geq0}
\)
will be considered in the predictive extension.

We use
\begin{equation}
  \beta=\log B_0
  \label{eq:beta-nuisance}
\end{equation}
as the fast-rate coordinate and treat it as a nuisance parameter when the
coupling $\lambda$ is the parameter of interest.
Two dimensionless time scales occur repeatedly:
\begin{equation}
  d=A\Delta t,
  \qquad
  c=\frac{B_0\Delta t}{2},
  \qquad
  \rho=e^{-c}.
  \label{eq:dc-rho}
\end{equation}
The parameter $d$ measures the texture evolution across one sampling interval,
while $\rho$ is the one-step correlation of a constant-rate OU quadrature.

The experiment is partially observed in two distinct senses.
First, the instantaneous map
\(
  (x,\gamma)\mapsto\Psi=\sqrt{x}\gamma
\)
is non-injective.
Second, even after a finite number of coherent samples, the texture path
between observation times remains latent.
The likelihood considered here is therefore the marginal likelihood of the
observed variables, with the texture integrated out.

\subsection{Gaussian-texture limit and emergent reflection symmetry}
\label{subsec:gaussian-limit}

Set
\begin{equation}
  \varepsilon=\alpha^{-1/2}.
  \label{eq:epsilon-alpha}
\end{equation}
At stationarity, $x\to1$ in probability as $\alpha\to\infty$.
More precisely, the centered fluctuation
\begin{equation}
  Z_t^{(\alpha)}
  =
  \sqrt{\alpha}\,(x_t-1)
  \label{eq:standardized-texture}
\end{equation}
converges to the stationary Gaussian OU process
\begin{equation}
  dZ_t=-A Z_t\,dt+\sqrt{2A}\,dW_t^x,
  \qquad
  Z_t\sim N(0,1),
  \label{eq:texture-ou-limit}
\end{equation}
and the standardized Lamperti variable $z_\alpha(r_t)$ has the same limit.
We denote this limiting process by $Z$.
Its path law is invariant under
\(
  Z\mapsto-Z
\).
In the same limit, the normalized coupling
\eqref{eq:normalized-coupling} becomes
\begin{equation}
  B_\lambda(Z)
  =
  B_0
  \exp\!\left(
    \lambda Z-\frac{\lambda^2}{2}
  \right),
  \label{eq:gaussian-coupling}
\end{equation}
and therefore satisfies the pointwise identity
\begin{equation}
  B_{-\lambda}(-Z)=B_\lambda(Z).
  \label{eq:rate-reflection}
\end{equation}

At $\varepsilon=0$ the observation amplitude no longer contains the latent
Gaussian fluctuation: $\Psi=\gamma$.
Hence changing
\(
  (Z,\lambda)
\)
into
\(
  (-Z,-\lambda)
\)
leaves the conditional fast dynamics unchanged and does not alter the observed
variable.
The sign symmetry of the latent Gaussian path therefore survives
marginalization.

\begin{proposition}[Limiting observed reflection symmetry]
\label{prop:benchmark-reflection}
In the strict Gaussian-texture limit,
the observed finite-dimensional distributions satisfy
\begin{equation}
  P^{Y}_{0,\lambda}
  =
  P^{Y}_{0,-\lambda}.
  \label{eq:benchmark-reflection}
\end{equation}
In particular, whenever the observed density is differentiable in $\lambda$,
\begin{equation}
  \left.
  \partial_\lambda
  \log p^Y_{0,\lambda}
  \right|_{\lambda=0}
  =
  0
  \qquad
  P^Y_{0,0}\text{-a.s.}
  \label{eq:lambda-score-zero}
\end{equation}
and the Fisher metric written in the signed coordinate $\lambda$ loses rank at
the fixed point.
\end{proposition}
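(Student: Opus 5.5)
The plan is to prove the reflection identity at the level of the joint law of latent and observed paths, and only then pass to marginals. In the strict limit $\varepsilon=0$, the latent texture driver is the stationary OU process $Z$ of \eqref{eq:texture-ou-limit}. Given the path $Z=(Z_t)_{t\ge0}$, the coherence quadratures solve the linear SDEs \eqref{eq:coupled-ou-R}--\eqref{eq:coupled-ou-I} with the deterministic time-dependent rate $t\mapsto B_\lambda(Z_t)$ of \eqref{eq:gaussian-coupling}. The observation is $\Psi_t=\gamma_t$.

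\textbf{Step 1: conditional law given the texture path.} Conditionally on $Z$, each quadrature is a Gaussian process whose law is a measurable functional of the rate path $B_\lambda(Z_\cdot)$ alone. The initial law is $N(0,1/2)$ independently of $Z$ by \eqref{eq:stationary-law}, and the Brownian drivers $W^R,W^I$ are independent of $W^x$. Concretely, the transition from $\gamma_s$ to $\gamma_t$ is Gaussian with mean $\gamma_s e^{-\frac12\int_s^t B}$ and variance $\frac12\bigl(1-e^{-\int_s^t B}\bigr)$. So the conditional finite-dimensional law of $(Y_0,\dots,Y_m)$ given $Z$ is a kernel $K\bigl(B_\lambda(Z_\cdot),\cdot\bigr)$ that depends on $(\lambda,Z)$ only through the path $B_\lambda(Z_\cdot)$.

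\textbf{Step 2: change of variables in the latent integral.} Write the marginal as
\[
P^Y_{0,\lambda}(\cdot)=\int K\bigl(B_\lambda(z_\cdot),\cdot\bigr)\,Q(dz),
\]
where $Q$ is the path law of $Z$. The map $z\mapsto -z$ preserves $Q$, because the OU process with $N(0,1)$ initial law is centered Gaussian. Substituting $z\mapsto -z$ and using the pointwise identity \eqref{eq:rate-reflection}, $B_{-\lambda}(-z_t)=B_\lambda(z_t)$, gives $P^Y_{0,-\lambda}=P^Y_{0,\lambda}$ for every finite set of sampling times. That is \eqref{eq:benchmark-reflection}. One technical point needs care: the kernel must be jointly measurable in the path. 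This follows from continuity of $z\mapsto\int_s^t B_\lambda(z_u)\,du$ on continuous paths.

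\textbf{Step 3: score and Fisher metric.} Let $p^Y_{0,\lambda}$ be the observed density, which is a Gaussian mixture and so is positive. Step 2 shows that $\lambda\mapsto p^Y_{0,\lambda}(y)$ is even for each $y$. Hence, wherever the density is differentiable, $\partial_\lambda p^Y_{0,\lambda}(y)|_{\lambda=0}=0$, and dividing by $p^Y_{0,0}>0$ gives \eqref{eq:lambda-score-zero}. The Fisher entry $g_{\lambda\lambda}(0)=\mathbb E[(\partial_\lambda\log p)^2]$ therefore vanishes. By Cauchy--Schwarz the cross entries $g_{\lambda\beta}(0)$ vanish as well, so the Fisher matrix in the coordinates $(\beta,\lambda)$ has rank at most one at the fixed point.

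\textbf{Main obstacle.} The delicate step is making Step 1 rigorous, namely that the conditional law really depends only on the rate path and that the invariant initial law does not depend on $Z$. This is exactly where independence of the drivers and the $\lambda$-free stationary law \eqref{eq:stationary-law} enter. Once that is established, the reflection argument is a one-line substitution.
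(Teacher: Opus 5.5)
Your proposal is correct and follows the paper's own proof. Both condition on the latent texture path, observe that the fast conditional law depends on $(Z,\lambda)$ only through $B_\lambda(Z)$, and use $B_{-\lambda}(-Z)=B_\lambda(Z)$ together with the sign-invariance of the OU path law. Both then integrate out the latent path and differentiate at $\lambda=0$. Your version adds details the paper leaves implicit: the explicit conditional Gaussian transition, the independence of $\gamma_0$ from the whole texture path, measurability of the kernel, and the vanishing of the cross Fisher entries.
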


\begin{proof}
For a latent path $Z$, the conditional law of the fast process under
parameter $\lambda$ depends on $Z$ through $B_\lambda(Z)$.
Equation \eqref{eq:rate-reflection} shows that this conditional law is
unchanged by
\(
  (Z,\lambda)\mapsto(-Z,-\lambda)
\).
Since $Z$ and $-Z$ have the same path law and the limiting observation no
longer depends on $Z$ through its amplitude, integration over the latent path
gives \eqref{eq:benchmark-reflection}.
Differentiating the identity at $\lambda=0$ yields
\eqref{eq:lambda-score-zero}.
\end{proof}

The essential point is that the reflection in
\eqref{eq:benchmark-reflection} is an \emph{observed statistical symmetry}.
For finite $\alpha$, the texture fluctuation also enters the amplitude
$\sqrt{x}$ in \eqref{eq:coherent-observation}; the transformation
$Z\mapsto-Z$ is then no longer invisible to the observer, and the sign
symmetry is weakly broken.

\subsection{The identifiable quotient coordinate}
\label{subsec:quotient-coordinate}

The symmetry \eqref{eq:benchmark-reflection} suggests replacing the signed
coordinate by the invariant coordinate
\begin{equation}
  \mu=\frac{\lambda^2}{2},
  \qquad
  \mu\geq0.
  \label{eq:mu-def}
\end{equation}
We now show that the vanishing first-order score in $\lambda$ does not imply
loss of the first invariant direction.

Let
\(
  p_{\beta,\mu}(y_1\mid y_0)
\)
denote the conditional density associated with the stationary observed
two-time law in the strict Gaussian-texture limit.
At $\lambda=0$ this is the ordinary Gaussian OU transition density; for
$\mu>0$ the notation refers only to the two-time conditional law and does not
assert that the marginal observed process is Markov.
Since the stationary density of $Y_0$ is parameter independent, its score is
also the score of the stationary pair experiment.
At $\mu=0$, write
\begin{equation}
  q
  =
  \partial_\beta\log p_{\beta,0},
  \qquad
  h
  =
  \partial_{\beta\beta}^2\log p_{\beta,0},
  \qquad
  k=h-q+q^2.
  \label{eq:qhk-def}
\end{equation}
A direct second-order marginal-likelihood calculation gives
\begin{equation}
  S_\mu
  :=
  \left.
  \partial_\mu\log p_{\beta,\mu}
  \right|_{\mu=0}
  =
  V(d)\,k,
  \qquad
  V(d)
  =
  \frac{2(d-1+e^{-d})}{d^2}.
  \label{eq:mu-score}
\end{equation}
The explicit calculation is collected in Appendix~\ref{app:cir-ou-formulas}.

For the Gaussian fast transition, define
\begin{equation}
  I(c)
  =
  \mathbb E[q^2]
  =
  \frac{
    2c^2\rho^2(1+\rho^2)
  }{
    (1-\rho^2)^2
  },
  \label{eq:Ic}
\end{equation}
and
\begin{equation}
  M_2(c)
  =
  \mathbb E[k^2]
  =
  \frac{
    2c^4\rho^2
    \bigl(
      7\rho^6+19\rho^4+5\rho^2+1
    \bigr)
  }{
    (1-\rho^2)^4
  }.
  \label{eq:M2c}
\end{equation}
The mixed moment is
\begin{equation}
  \mathbb E[qk]=-c\,I(c).
  \label{eq:qk-cross}
\end{equation}
Hence the Fisher matrix of the limiting quotient experiment at $\mu=0$ is
\begin{equation}
  g_F^{\mathrm{quot}}
  =
  \begin{pmatrix}
    I(c) & -cV(d)I(c)\\
    -cV(d)I(c) & V(d)^2M_2(c)
  \end{pmatrix}.
  \label{eq:quotient-fisher}
\end{equation}

\begin{proposition}[Non-degeneracy of the identifiable quotient direction]
\label{prop:quotient-fisher-positive}
For $d>0$ and $c>0$, the Fisher metric
\eqref{eq:quotient-fisher} is positive definite.
Equivalently, after elimination of the nuisance coordinate $\beta$,
\begin{equation}
  I_{\mu\mu}^{\mathrm{eff}}
  =
  V(d)^2
  \bigl[
    M_2(c)-c^2I(c)
  \bigr]
  >0.
  \label{eq:mu-efficient-fisher}
\end{equation}
More explicitly,
\begin{equation}
  M_2(c)-c^2I(c)
  =
  \frac{
    4c^4\rho^4
    (\rho^2+3)(3\rho^2+1)
  }{
    (1-\rho^2)^4
  }
  >0.
  \label{eq:quotient-positive-factor}
\end{equation}
\end{proposition}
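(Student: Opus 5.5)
The plan is to reduce positive definiteness of \eqref{eq:quotient-fisher} to the Schur complement. The $(\beta,\beta)$ entry is $I(c)$. For $c>0$ we have $\rho\in(0,1)$, so \eqref{eq:Ic} gives $I(c)>0$ directly. A symmetric $2\times2$ matrix with positive $(1,1)$ entry is positive definite if and only if its determinant is positive. Equivalently, the efficient information
\[
I_{\mu\mu}^{\mathrm{eff}}
=V(d)^2M_2(c)-\frac{c^2V(d)^2I(c)^2}{I(c)}
=V(d)^2\bigl[M_2(c)-c^2I(c)\bigr]
\]
must be positive. This establishes the claimed equivalence with \eqref{eq:mu-efficient-fisher}. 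It remains to show $V(d)\neq0$ and to verify \eqref{eq:quotient-positive-factor}.

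For the factor $V(d)$, I would use $e^{-d}>1-d$ for $d>0$, which is strict convexity of the exponential. This gives $d-1+e^{-d}>0$, hence $V(d)>0$.

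The main step is the algebraic identity \eqref{eq:quotient-positive-factor}. I would put both terms over the common denominator $(1-\rho^2)^4$:
\[
c^2I(c)=\frac{2c^4\rho^2(1+\rho^2)(1-\rho^2)^2}{(1-\rho^2)^4}.
\]
The numerator of the difference is then $2c^4\rho^2 P(\rho^2)$ with
\[
P(u)=7u^3+19u^2+5u+1-(1+u)(1-u)^2 .
\]
Expanding $(1+u)(1-u)^2=1-u-u^2+u^3$ gives
\[
P(u)=6u^3+20u^2+6u=2u(3u^2+10u+3)=2u(u+3)(3u+1).
\]
Substituting $u=\rho^2$ yields exactly $4c^4\rho^4(\rho^2+3)(3\rho^2+1)$. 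For $c>0$ and $\rho\in(0,1)$ this numerator is strictly positive, and the denominator is positive as well, so $M_2(c)-c^2I(c)>0$.

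Combining $V(d)>0$ with $M_2(c)-c^2I(c)>0$ gives $I^{\mathrm{eff}}_{\mu\mu}>0$. Together with $I(c)>0$, this shows the Fisher matrix is positive definite.

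The only genuine inputs are the moment formulas \eqref{eq:Ic}, \eqref{eq:M2c} and \eqref{eq:qk-cross}, which I take as given from Appendix~\ref{app:cir-ou-formulas}. The step most prone to error is the polynomial bookkeeping, in particular checking that the constant term of $P$ cancels. That cancellation is what makes $\rho^4$, rather than $\rho^2$, appear in \eqref{eq:quotient-positive-factor}.
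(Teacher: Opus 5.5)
Your proof is correct and follows the paper's own argument: $I(c)>0$ and $V(d)>0$, the Schur complement of the $\beta$ block giving $I^{\mathrm{eff}}_{\mu\mu}=V(d)^2[M_2(c)-c^2I(c)]$, and strict positivity of the factored expression. The only difference is that you carry out the reduction $7u^3+19u^2+5u+1-(1+u)(1-u)^2=2u(u+3)(3u+1)$ explicitly, whereas the paper asserts the factorization on the basis of the appendix moment formulas.
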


\begin{proof}
For $c>0$ one has $0<\rho<1$ and hence $I(c)>0$.
Moreover,
\(
d-1+e^{-d}>0
\)
for $d>0$, so $V(d)>0$.
The Schur complement of the $\beta$ block in
\eqref{eq:quotient-fisher} is exactly
\eqref{eq:mu-efficient-fisher}, and the factorization
\eqref{eq:quotient-positive-factor} is strictly positive.
\end{proof}

Thus the strict limiting experiment has two simultaneous features:
\begin{equation}
  S_\lambda=0,
  \qquad
  I_{\mu\mu}^{\mathrm{eff}}>0.
  \label{eq:fold-summary}
\end{equation}
The signed parametrization loses first-order rank, but the invariant quotient
coordinate remains statistically regular.
This distinction will become the geometric starting point of
Section~\ref{sec:emergent-quotient}.

\subsection{Finite texture as a weak unfolding}
\label{subsec:benchmark-unfolding-clue}

The exact symmetry holds only at $\varepsilon=0$.
For large but finite $\alpha$, the observation amplitude weakly reveals the
texture fluctuation and restores sensitivity to the sign of $\lambda$.
The first two efficient directions can be computed explicitly.

Let $\mathfrak r$ be defined by the leading observed score at the symmetric
point,
\begin{equation}
  \left.
  S_\lambda^Y(\varepsilon,\lambda)
  \right|_{\lambda=0}
  =
  \varepsilon u(d)\,\mathfrak r
  +
  o_{L^2}(\varepsilon),
  \qquad
  u(d)=\frac{1-e^{-d}}{d},
  \label{eq:r-score-definition}
\end{equation}
and retain $k$ from \eqref{eq:qhk-def}.
After projection on the orthogonal complement of the nuisance score $q$, set
\begin{equation}
  \mathfrak r^\perp=\mathfrak r-q,
  \qquad
  k^\perp=k+cq.
  \label{eq:efficient-rk}
\end{equation}
The projection coefficients follow from
\(
  \mathbb E[q\mathfrak r]=I(c)
\)
and \eqref{eq:qk-cross}.
The resulting efficient score has the local expansion
\begin{equation}
  S_{\lambda,\mathrm{eff}}
  =
  \varepsilon u(d)\,\mathfrak r^\perp
  +
  \lambda V(d)\,k^\perp
  +
  o_{L^2}(\varepsilon+|\lambda|).
  \label{eq:benchmark-score-unfolding}
\end{equation}
The explicit score functions are given in
Appendix~\ref{app:cir-ou-formulas}; for the present section, their Gram
geometry is the relevant fact.

\begin{proposition}[Independence of the odd and even efficient directions]
\label{prop:benchmark-gram-positive}
For every $c>0$,
\begin{equation}
  \det
  \operatorname{Gram}
  \bigl(
    \mathfrak r^\perp,
    k^\perp
  \bigr)
  =
  \frac{
    32c^6\rho^6
    (3\rho^4+2\rho^2+3)
  }{
    (1-\rho^2)^6
  }
  >0.
  \label{eq:benchmark-gram}
\end{equation}
Hence the sign-restoring and quotient-identifiable directions remain linearly
independent after nuisance elimination.
\end{proposition}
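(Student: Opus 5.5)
The Gram determinant of the two efficient directions is
\[
\det\operatorname{Gram}(\mathfrak r^\perp,k^\perp)=\mathbb E[(\mathfrak r^\perp)^2]\,\mathbb E[(k^\perp)^2]-\bigl(\mathbb E[\mathfrak r^\perp k^\perp]\bigr)^2 .
\]
The plan is to reduce each of its three entries to the Gaussian moments already available and then simplify the resulting rational function of $\rho$. The primitive quantities are $\mathbb E[q^2]=I(c)$ from \eqref{eq:Ic}, $\mathbb E[k^2]=M_2(c)$ from \eqref{eq:M2c}, $\mathbb E[qk]=-cI(c)$ from \eqref{eq:qk-cross}, and $\mathbb E[q\mathfrak r]=I(c)$. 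Two further moments are new and are needed for the score $\mathfrak r$ defined by \eqref{eq:r-score-definition}: $R:=\mathbb E[\mathfrak r^2]$ and $K:=\mathbb E[\mathfrak r k]$.

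\textbf{Step 1: reduction to $R$ and $K$.} Using $\mathfrak r^\perp=\mathfrak r-q$ and $k^\perp=k+cq$, the entries become
\[
\mathbb E[(\mathfrak r^\perp)^2]=R-I,\qquad \mathbb E[(k^\perp)^2]=M_2-c^2I,\qquad \mathbb E[\mathfrak r^\perp k^\perp]=K+cI+cI-c^2\cdot 0\ldots
\]
The last entry must be expanded carefully:
\[
\mathbb E[(\mathfrak r-q)(k+cq)]=K+c\,\mathbb E[q\mathfrak r]-\mathbb E[qk]-cI=K+cI+cI-cI=K+cI .
\]
The factor $M_2-c^2I$ is already given in closed form by \eqref{eq:quotient-positive-factor}. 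Everything therefore reduces to computing $R$ and $K$.

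\textbf{Step 2: compute $R$ and $K$ in Gaussian form.} I would use the explicit form of $\mathfrak r$ from Appendix~\ref{app:cir-ou-formulas}. The origin of $\mathfrak r$ is that at first order in $\varepsilon$ the amplitude $\sqrt{x}\approx 1+\varepsilon Z/2$ multiplies $\gamma$. Integrating out $Z$ against the linear $\lambda Z$ term in the rate then gives a product of a quadratic form in $(Y_0,Y_1)$ with the rate score. Concretely, I expect $\mathfrak r$ to be a polynomial of degree at most four in the jointly Gaussian vector $(Y_0,Y_1)$, which has correlation $\rho$ per quadrature and two independent quadratures. The functions $q$ and $k$ are also polynomials in this vector.

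All moments then follow from Isserlis/Wick formulas for the bivariate Gaussian per quadrature, combined with independence across quadratures. The cleanest route is to work with $U=Y_1-\rho Y_0$ and $Y_0$, which are independent with variances $(1-\rho^2)/2$ and $1/2$. In these variables $q$, $k$ and $\mathfrak r$ become Hermite-type polynomials, and the expectations are sums of products of Gaussian moments.

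\textbf{Step 3: assemble and factor.} I would substitute into $(R-I)(M_2-c^2I)-(K+cI)^2$ and clear the common denominator $(1-\rho^2)^6$. Then $c=-\log\rho$ is kept as an independent symbol: every term should carry exactly the power $c^6$, because $q$ scales like $c$, $k$ like $c^2$ and $\mathfrak r$ like $c$. Hence no transcendental relation between $c$ and $\rho$ is needed. The remaining numerator is a polynomial in $\rho^2$, to be factored as $32\rho^6(3\rho^4+2\rho^2+3)$.

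Positivity is then immediate. For $c>0$ we have $0<\rho<1$, and $3\rho^4+2\rho^2+3=3(\rho^2+\tfrac13)^2+\tfrac83>0$.

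\textbf{Main obstacle.} The main obstacle is Step 2, specifically the correct normalization of $\mathfrak r$ (and hence of $R$ and $K$), since $\mathfrak r$ is only defined through the leading-order expansion \eqref{eq:r-score-definition}. As a consistency check on the form taken from the appendix, I would first verify that it reproduces $\mathbb E[q\mathfrak r]=I(c)$. The polynomial factorization in Step 3 is routine and can be machine-checked.
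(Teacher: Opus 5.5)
Your plan is correct and is essentially the paper's argument: take the explicit odd direction $\mathfrak r=qH-q-t$ from \eqref{eq:appC-r}, reduce the Gram entries to $\mathbb E[\mathfrak r^2]$, $\mathbb E[\mathfrak r k]$ and the known moments of $q,k$, then factor. Your reductions $\mathbb E[(\mathfrak r^\perp)^2]=\mathbb E[\mathfrak r^2]-I$ and $\mathbb E[\mathfrak r^\perp k^\perp]=\mathbb E[\mathfrak r k]+cI$, and your $c^6$ homogeneity argument, are all right. The one practical difference is the choice of coordinates: the paper works in the eigenmode basis $W_\pm$ of Appendix~\ref{app:cir-ou-formulas}, where $q$, $k$ and $\mathfrak r$ are polynomials in two independent centered $\operatorname{Exp}(1)$ variables $U_\pm$, which makes the moment computation much lighter than Wick calculus in $(Y_0,Y_1-\rho Y_0)$.
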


\begin{proof}
The closed-form moments of
\(
(\mathfrak r^\perp,k^\perp)
\)
give \eqref{eq:benchmark-gram}.
Since $c>0$ implies $0<\rho<1$, every factor on the right-hand side is
strictly positive.
\end{proof}

As a first consequence,
\begin{equation}
  I_{\lambda\lambda}^{\mathrm{eff}}(\varepsilon,0)
  =
  \varepsilon^2u(d)^2
  \mathbb E[(\mathfrak r^\perp)^2]
  +
  o(\varepsilon^2),
  \label{eq:finite-alpha-efficient-fisher}
\end{equation}
so the signed coupling direction is regular for all sufficiently small
$\varepsilon>0$ (equivalently, sufficiently large finite $\alpha$), while its
Fisher information collapses at order
\(
\varepsilon^2=\alpha^{-1}
\)
as the symmetric limit is approached.

Equation \eqref{eq:benchmark-score-unfolding} displays the two local scales
that will organize the rest of the paper.
The odd information is restored at order $\varepsilon$, whereas the even
quotient direction enters at order $\lambda$ in the score, or equivalently at
order $\lambda^2$ in the local statistical displacement.
In the transition layer
\(
  \lambda=\varepsilon\zeta
\),
both mechanisms therefore contribute at the same order.
Section~\ref{sec:emergent-quotient} first isolates the limiting reflection
quotient and its invariant tangent; Section~\ref{sec:weak-unfolding} then
shows how the odd and even directions combine in a general weak unfolding.
The corresponding statistical resolution is developed afterwards.

\section{Emergent reflection symmetry and the identifiable quotient}
\label{sec:emergent-quotient}

Section~\ref{sec:benchmark} exhibited the central phenomenon in the CIR--OU
benchmark: a signed parameter that is locally regular at finite texture becomes
identified with its negative in the strict Gaussian-texture limit, while the
invariant coordinate $\mu=\lambda^2/2$ retains non-degenerate Fisher
information.
We now abstract this mechanism from the benchmark.
The first step is measure-theoretic and requires no differentiability; the
second concerns the geometry of the resulting orbit space near the fixed
locus.

\subsection{A hidden-involution principle}
\label{subsec:hidden-involution}

Let $(\mathsf Z,\mathcal Z)$ be a latent measurable space and
$(\mathsf Y,\mathcal Y)$ an observation space.
Consider a family of observed laws
\begin{equation}
  P_{\varepsilon,\lambda}(A)
  =
  \int_{\mathsf Z}
  K_\lambda(z,A)\,\nu_\varepsilon(dz),
  \qquad
  A\in\mathcal Y,
  \label{eq:hidden-model}
\end{equation}
where $K_\lambda$ is a Markov kernel and $\nu_\varepsilon$ is the latent law.
Additional regular parameters and nuisances are suppressed temporarily.

Assume that the latent space carries a measurable involution
\begin{equation}
  \iota^2=\mathrm{id}_{\mathsf Z},
  \label{eq:iota-involution}
\end{equation}
and that the observation mechanism is equivariant in the sense that
\begin{equation}
  K_{-\lambda}(z,A)
  =
  K_\lambda(\iota z,A)
  \qquad
  \text{for all }A\in\mathcal Y.
  \label{eq:kernel-equivariance}
\end{equation}

\begin{proposition}[Hidden-involution identity]
\label{prop:hidden-involution}
Under \eqref{eq:iota-involution}--\eqref{eq:kernel-equivariance},
\begin{equation}
  P_{\varepsilon,-\lambda}(A)
  =
  \int_{\mathsf Z}
  K_\lambda(z,A)\,
  (\iota_\#\nu_\varepsilon)(dz).
  \label{eq:hidden-involution-identity}
\end{equation}
Consequently, if the limiting latent law is invariant,
\begin{equation}
  \iota_\#\nu_0=\nu_0,
  \label{eq:latent-invariance}
\end{equation}
then the limiting observed experiment satisfies
\begin{equation}
  P_{0,\lambda}=P_{0,-\lambda}.
  \label{eq:abstract-reflection}
\end{equation}
\end{proposition}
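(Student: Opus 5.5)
The argument is a single change of variables in the latent integral, followed by specialization to $\varepsilon=0$. Fix $A\in\mathcal Y$ and start from the definition \eqref{eq:hidden-model} with $-\lambda$ in place of $\lambda$:
\[
  P_{\varepsilon,-\lambda}(A)=\int_{\mathsf Z}K_{-\lambda}(z,A)\,\nu_\varepsilon(dz).
\]
By the equivariance \eqref{eq:kernel-equivariance}, the integrand equals $K_\lambda(\iota z,A)$. This is the function $z\mapsto K_\lambda(\cdot,A)$ composed with $\iota$.

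Next apply the abstract change-of-variables formula for image measures. For any bounded measurable $f$ on $\mathsf Z$,
\[
  \int_{\mathsf Z} f(\iota z)\,\nu_\varepsilon(dz)=\int_{\mathsf Z} f(w)\,(\iota_\#\nu_\varepsilon)(dw).
\]
We use $f=K_\lambda(\cdot,A)$, which is measurable because $K_\lambda$ is a Markov kernel and bounded because it takes values in $[0,1]$. This gives \eqref{eq:hidden-involution-identity}. Measurability of $\iota$ ensures that $\iota_\#\nu_\varepsilon$ is a well-defined probability measure.

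The involution property \eqref{eq:iota-involution} is not needed for this first identity. It becomes relevant only for consistency: applying the identity twice returns $P_{\varepsilon,\lambda}$, since $(\iota\circ\iota)_\#\nu_\varepsilon=\nu_\varepsilon$.

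For the consequence, set $\varepsilon=0$ and substitute the invariance \eqref{eq:latent-invariance}, $\iota_\#\nu_0=\nu_0$, into the right-hand side of \eqref{eq:hidden-involution-identity}. This yields
\[
  \int_{\mathsf Z} K_\lambda(z,A)\,\nu_0(dz)=P_{0,\lambda}(A).
\]
Since $A\in\mathcal Y$ was arbitrary, the two measures agree on all of $\mathcal Y$, which is \eqref{eq:abstract-reflection}.

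There is no real obstacle here. The only points needing care are measure-theoretic: the joint measurability of $z\mapsto K_\lambda(z,A)$, and the fact that equivariance must hold for every $z$, or at least $\nu_\varepsilon$-almost everywhere, so that the integrands agree under the integral. Both follow from the standing hypotheses.
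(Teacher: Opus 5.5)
Your proposal is correct and follows essentially the same route as the paper: rewrite the integrand using the kernel equivariance, apply the image-measure change of variables to get the push-forward identity, then set $\varepsilon=0$ and use $\iota_\#\nu_0=\nu_0$. You are also right that the involution property itself is not needed for the first identity.
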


\begin{proof}
Using \eqref{eq:kernel-equivariance} and the definition of push-forward,
\[
  P_{\varepsilon,-\lambda}(A)
  =
  \int K_\lambda(\iota z,A)\,\nu_\varepsilon(dz)
  =
  \int K_\lambda(z,A)\,(\iota_\#\nu_\varepsilon)(dz).
\]
Equation \eqref{eq:abstract-reflection} follows immediately from
\eqref{eq:latent-invariance}.
\end{proof}

Proposition~\ref{prop:hidden-involution} is deliberately elementary.
Its role is to locate the origin of the observed symmetry.
The reflection need not be a symmetry of the full latent model for
$\varepsilon>0$.  It becomes an exact symmetry of the \emph{marginal observed
law} when the limiting latent distribution becomes invariant under $\iota$.
In the CIR--OU benchmark, $\iota$ is the sign reversal of the limiting Gaussian
texture fluctuation.
Section~\ref{sec:weak-unfolding} will formulate the symmetry-breaking jet
directly at the observed Hellinger level; departure of
$\iota_\#\nu_\varepsilon$ from $\nu_\varepsilon$ then provides one
sufficient latent mechanism for producing it.

\subsection{The reflection relation and the fixed locus}
\label{subsec:reflection-relation}

We now restore a regular parameter $\beta$ and work locally on
\begin{equation}
  M
  =
  U_\beta\times(-\delta,\delta)_\lambda,
  \label{eq:parameter-M}
\end{equation}
where $U_\beta$ is open in $\mathbb R^m$.
The limiting reflection acts by
\begin{equation}
  s(\beta,\lambda)
  =
  (\beta,-\lambda).
  \label{eq:parameter-reflection}
\end{equation}
Its fixed locus is
\begin{equation}
  F
  =
  U_\beta\times\{0\}.
  \label{eq:fixed-locus}
\end{equation}

For the remainder of this section we assume that, in the neighbourhood under
consideration, reflection is the only limiting non-identifiability:
\begin{equation}
  P_{0,\beta,\lambda}
  =
  P_{0,\tilde\beta,\tilde\lambda}
  \quad\Longleftrightarrow\quad
  \tilde\beta=\beta,
  \qquad
  \tilde\lambda=\pm\lambda.
  \label{eq:local-complete-identification}
\end{equation}
This local completeness condition is satisfied by the benchmark after
shrinking the neighbourhood of the symmetric point.
Indeed, Proposition~\ref{prop:quotient-fisher-positive} gives a
non-degenerate Fisher metric in the invariant coordinates $(\beta,\mu)$.
The corresponding differential of the quotient statistical map is therefore
injective at the base point; the constant-rank theorem then gives local
injectivity of the quotient parametrization after restricting to a sufficiently
small neighbourhood.
Exact reflection symmetry accounts for the two signed representatives
$\lambda$ and $-\lambda$.

The orbit equivalence relation is
\begin{equation}
  R
  =
  \Delta_M\cup\Gamma_s
  \subset M\times M,
  \label{eq:orbit-relation}
\end{equation}
where $\Delta_M$ is the diagonal and $\Gamma_s$ the graph of the reflection.
Away from $F$, the two branches are disjoint and the finite
$\mathbb Z_2$-action is free and proper.
At a fixed point, however, they meet.

A standard form of Godement's criterion states that an equivalence relation
defines a smooth quotient for which the canonical projection is a submersion
only if its graph is an embedded submanifold of $M\times M$ and the projection
of that graph onto $M$ is a submersion
\cite{Serre1992}.
The reflection relation fails the first condition at $F$.

\begin{proposition}[Failure of ordinary quotient regularity at the fixed locus]
\label{prop:godement-failure}
The relation $R$ in \eqref{eq:orbit-relation} is not a $C^1$ embedded
submanifold of $M\times M$ at points lying over $F$.
Hence the reflection quotient is not a regular quotient in the sense of
Godement at the fixed locus.
\end{proposition}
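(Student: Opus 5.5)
The plan is a local computation at a point $(p,p)$ with $p=(\beta_0,0)\in F$. Near such a point, $R$ is the union of two smooth $(m+1)$-dimensional submanifolds of $M\times M$ (write $\dim M=m+1$). The first is the diagonal
\[
\Delta_M=\{(\beta,\lambda,\beta,\lambda)\}.
\]
The second is the graph of the reflection,
\[
\Gamma_s=\{(\beta,\lambda,\beta,-\lambda)\}.
\]
They intersect exactly along $\{(\beta,0,\beta,0)\}\cong F$, which has dimension $m$. I will argue by contradiction: suppose $R$ is a $C^1$ embedded submanifold near $(p,p)$. Since $R$ contains the open subsets $\Delta_M\setminus\Delta_F$ and $\Gamma_s\setminus\Delta_F$, both of dimension $m+1$, and these accumulate at $(p,p)$, the submanifold must have dimension $m+1$ there.

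Its tangent space at $(p,p)$ must contain the tangent cone of $R$. I will check this inclusion directly: for $C^1$ curves in $R$ through $(p,p)$, the velocities lie in $T_{(p,p)}R$. Curves lying in $\Delta_M$ give all of $T\Delta_M$, which is spanned by the vectors $(v,w,v,w)$. Curves lying in $\Gamma_s$ give all of $T\Gamma_s$, spanned by $(v,w,v,-w)$. Each family consists of curves that stay inside $R$, so both subspaces lie in $T_{(p,p)}R$.

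Their sum is
\[
\{(v,w,v,w')\}:\ v\in\mathbb R^m,\ w,w'\in\mathbb R,
\]
which has dimension $m+2>m+1$. This contradicts $\dim R=m+1$.

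The dimension count is the step that needs care, because a topological subset can fail to be a manifold while still having one tangent space. As a cross-check I can reduce to $m=0$ by slicing at fixed $\beta$: $R$ is then locally the union of two transverse lines $\{\lambda'=\lambda\}\cup\{\lambda'=-\lambda\}$ in $\mathbb R^2$. Removing the crossing point leaves four components, while removing a point from a $1$-manifold leaves at most two. This topological argument actually rules out $R$ being even a $C^0$ submanifold. It extends to general $m$ because the product structure along $\beta$ is preserved. I would present the tangent-space argument and mention the slice argument as an alternative.

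The final clause then follows directly from the criterion of \cite{Serre1992} quoted just before the statement. Godement regularity requires $R$ to be an embedded submanifold, and we have shown that this fails at every point of $\Delta_F$. Hence the reflection quotient is not a regular quotient in the sense of Godement at the fixed locus, even though the $\mathbb Z_2$-action is free and proper on $M\setminus F$.
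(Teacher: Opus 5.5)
Your proof is correct and is essentially the paper's: both use the two smooth branches $\Delta_M$ and $\Gamma_s$ meeting along $\Delta_F$, and your count that $T\Delta_M+T\Gamma_s=\{(v,w,v,w')\}$ has dimension $m+2>m+1$ is a slightly sharper form of the paper's remark that the union of the two tangent spaces is not a vector space (the paper's transverse slice $uv=0$ is your $m=0$ cross). The only soft spot is the topological aside: for $m\ge1$, removing a point from $U_\beta\times\{uv=0\}$ does not disconnect it, so the four-component count does not extend ``by the product structure'' alone (the $C^0$ claim would need something like local homology), but the $C^1$ statement does not depend on this.
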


\begin{proof}
Write a point of $M\times M$ as
$(\beta,\lambda;\beta',\lambda')$.
Locally,
\begin{equation}
  R
  =
  \left\{
    \beta'=\beta,\quad
    (\lambda'-\lambda)(\lambda'+\lambda)=0
  \right\}.
  \label{eq:relation-local-equation}
\end{equation}
In the two normal coordinates
\begin{equation}
  u=\lambda'-\lambda,
  \qquad
  v=\lambda'+\lambda,
  \label{eq:uv-coordinates}
\end{equation}
the transverse slice of $R$ is therefore
\begin{equation}
  uv=0.
  \label{eq:normal-cross}
\end{equation}
It is the union of the two coordinate axes.
Equivalently, the tangent spaces of the two smooth branches are
\begin{align}
  T\Delta_M
  &=
  \{
    (\delta\beta,\delta\lambda;
     \delta\beta,\delta\lambda)
  \},
  \\
  T\Gamma_s
  &=
  \{
    (\delta\beta,\delta\lambda;
     \delta\beta,-\delta\lambda)
  \}.
\end{align}
At $\lambda=0$ their union has a tangent cone with two distinct normal
directions and is not a vector space.
Thus $R$ is not a $C^1$ embedded submanifold there.
\end{proof}

The obstruction is therefore attached to the fixed-point isotropy, not to the
behaviour away from it.
For $\lambda\neq0$, the quotient is locally an ordinary smooth quotient and
the orbit projection is a two-sheeted covering.

\subsection{Coarse quotient, reflection orbifold and invariant coordinate}
\label{subsec:coarse-quotient}

Although the ordinary submersion quotient fails at $F$, the orbit space has a
simple coarse description.
Define
\begin{equation}
  q:M\longrightarrow Q,
  \qquad
  q(\beta,\lambda)
  =
  \left(
    \beta,\,
    \mu=\frac{\lambda^2}{2}
  \right),
  \label{eq:coarse-quotient-map}
\end{equation}
with
\begin{equation}
  Q
  =
  U_\beta\times
  \left[0,\frac{\delta^2}{2}\right).
  \label{eq:Q-half-space}
\end{equation}
The map $q$ is constant precisely on the $\mathbb Z_2$-orbits and induces a
homeomorphism
\begin{equation}
  M/\mathbb Z_2
  \simeq Q.
  \label{eq:coarse-homeomorphism}
\end{equation}
Thus the coarse identifiable space is naturally represented as a smooth
manifold with boundary.

This representation must not be confused with regularity of the original
orbit projection.
Indeed,
\begin{equation}
  dq_{(\beta,0)}
  (\delta\beta,\delta\lambda)
  =
  (\delta\beta,0),
  \label{eq:dq-rank-drop}
\end{equation}
so $q$ loses one rank along $F$ and is not a submersion there.

If isotropy is retained rather than discarded, the finite reflection action
defines a proper \'etale action groupoid
\begin{equation}
  \mathbb Z_2\ltimes M
  \rightrightarrows M.
  \label{eq:action-groupoid}
\end{equation}
In orbifold conventions that admit reflections, this is the local model of a
reflection orbifold; equivalent conventions describe the fixed stratum as a
reflector boundary.
The groupoid remains smooth at the fixed locus even though its orbit relation,
viewed as the subset $R\subset M\times M$, has the crossing described in
Proposition~\ref{prop:godement-failure}.
For general background on finite-group quotient orbifolds and their groupoid
description, see \cite{AdemLeidaRuan2007}.

The coordinate $\mu$ is not an arbitrary desingularizing trick.
In the one-dimensional normal representation, $\mathbb Z_2$ acts by the sign
representation.
The invariant polynomial algebra is generated by $\lambda^2$, and Schwarz's
theorem on smooth invariants of compact group actions implies, in this case,
that every smooth reflection-invariant germ factors smoothly through
$\lambda^2$ \cite{Schwarz1975}.
With the normalization in \eqref{eq:coarse-quotient-map},
\begin{equation}
  f(\beta,\lambda)=f(\beta,-\lambda)
  \quad\Longrightarrow\quad
  f(\beta,\lambda)
  =
  \bar f
  \left(
    \beta,\frac{\lambda^2}{2}
  \right)
  \label{eq:smooth-invariant-factorization}
\end{equation}
for a smooth germ $\bar f$ on the half-space $Q$.

\subsection{Invariant jets and the first quotient tangent}
\label{subsec:invariant-jets}

The preceding factorization clarifies the order at which information can
survive the reflection.
At a fixed point $x_0=(\beta_0,0)$,
\begin{equation}
  T_{x_0}M
  =
  T_{x_0}F\oplus N,
  \label{eq:tangent-splitting}
\end{equation}
where the isotropy acts trivially on $T_{x_0}F$ and by sign on the
one-dimensional normal space $N$.
Hence
\begin{equation}
  \operatorname{Sym}^1(N^*)^{\mathbb Z_2}
  =
  0,
  \qquad
  \operatorname{Sym}^2(N^*)^{\mathbb Z_2}
  \simeq\mathbb R.
  \label{eq:normal-invariants}
\end{equation}
No nonzero invariant first normal jet is allowed, whereas a quadratic normal
jet is allowed.

To make this statistical, suppose the limiting family is dominated and let
\begin{equation}
  \Phi(\beta,\lambda)
  =
  2\sqrt{p_{0,\beta,\lambda}}
  \label{eq:hellinger-map-section3}
\end{equation}
be its Hellinger embedding.
Exact reflection symmetry implies
\begin{equation}
  \Phi(\beta,\lambda)
  =
  \bar\Phi
  \left(
    \beta,\frac{\lambda^2}{2}
  \right).
  \label{eq:hellinger-factorization}
\end{equation}
Consequently,
\begin{equation}
  \left.
  \partial_\lambda\Phi
  \right|_{\lambda=0}
  =
  0,
  \qquad
  \left.
  \partial_{\lambda\lambda}^2\Phi
  \right|_{\lambda=0}
  =
  \left.
  \partial_\mu\bar\Phi
  \right|_{\mu=0}.
  \label{eq:hellinger-jets}
\end{equation}

Equation \eqref{eq:hellinger-jets} also prevents a common misinterpretation.
Second-order identifiability in the signed coordinate refers to the ramified
map
\begin{equation}
  M\xrightarrow{q}Q.
  \label{eq:ramification-map}
\end{equation}
On the identifiable space $Q$ itself, $\mu$ is an ordinary first-order
coordinate.
Thus the relevant decomposition is
\begin{equation}
  M
  \xrightarrow{q}
  Q
  \xrightarrow{\bar\Phi}
  L^2,
  \label{eq:q-phi-factorization}
\end{equation}
and the order-two behaviour belongs to the first arrow, not to a failure of
first-order regularity of the second one.
In the benchmark,
Proposition~\ref{prop:quotient-fisher-positive} verifies explicitly that
$d\bar\Phi(\partial_\mu)$ remains nonzero after nuisance elimination.

This distinction is also useful in the presence of nuisance coordinates.
A smooth redefinition of the regular coordinate of the form
\begin{equation}
  \widetilde\beta
  =
  \beta
  +
  O(\lambda^2)
  \label{eq:nuisance-coordinate-mixing}
\end{equation}
can change a representative of the second normal jet by a tangent vector in
the nuisance direction.
The corresponding class modulo the nuisance tangent space is unchanged.
The efficient even direction represented by $k^\perp$ in the benchmark
of Section~\ref{sec:benchmark} is precisely a representative of this
quotient-identifiable class.

\subsection{Parametrization quotient versus Fisher geometry}
\label{subsec:orbifold-vs-fisher}

There is a final distinction that will matter later.
The geometry induced by statistical distinguishability on $Q$ is not the
same object as an ordinary Riemannian metric on the reflection orbifold.

To see this, let a non-degenerate metric on the identifiable coordinates be
written as
\begin{equation}
  g_Q
  =
  g_{\beta\beta}\,d\beta^2
  +
  2g_{\beta\mu}\,d\beta\,d\mu
  +
  g_{\mu\mu}\,d\mu^2.
  \label{eq:quotient-general-metric}
\end{equation}
Since
\(
  d\mu=\lambda\,d\lambda
\),
its pull-back to the signed parametrization is
\begin{equation}
  q^*g_Q
  =
  g_{\beta\beta}\,d\beta^2
  +
  2\lambda g_{\beta\mu}\,d\beta\,d\lambda
  +
  \lambda^2g_{\mu\mu}\,d\lambda^2.
  \label{eq:pullback-degenerate}
\end{equation}
Even when $g_Q$ is non-degenerate at $\mu=0$, its pull-back necessarily
degenerates in the normal $\lambda$ direction.
This is exactly the behaviour of the benchmark Fisher metric.

By contrast, start with an ordinary non-degenerate
$\mathbb Z_2$-invariant Riemannian metric upstairs containing the normal term
$d\lambda^2$.
On the free part $\mu>0$, that term is represented in coarse coordinates by
\begin{equation}
  d\lambda^2
  =
  \frac{d\mu^2}{2\mu},
  \label{eq:orbifold-metric-coarse}
\end{equation}
which is singular as a tensor in the boundary coordinate $\mu$.
The two behaviours are opposite.  An ordinary reflection-orbifold metric is
non-degenerate upstairs but singular in the coarse boundary chart, whereas the
quotient Fisher metric considered here is regular in the identifiable boundary
coordinate and degenerates after pull-back to the signed coordinate
$\lambda$.
\begin{equation}
  \text{orbifold regularity upstairs}
  \quad\longleftrightarrow\quad
  \text{Fisher regularity on the identifiable quotient}.
  \label{eq:orbifold-fisher-distinction}
\end{equation}
Accordingly, the Fisher metric of the identifiable experiment should not be
described as the ordinary orbifold metric inherited from a non-degenerate
metric on the signed parameter space.

We will use the following terminology.
The limiting statistical model is an
\emph{identified reflection quotient}, represented by the manifold with
boundary $Q$ in the invariant coordinate $\mu$.
The action groupoid records the reflection isotropy, while the Fisher metric
records statistical distinguishability on the coarse identifiable space.
Keeping these two structures separate is essential.

The quotient analysis is now complete on the exact symmetric face.
What it does not determine is how the missing signed direction reappears when
$\varepsilon>0$.
The next section addresses this transverse question directly at the observed
Hellinger level and then relates the resulting jets to a sufficient
hidden-latent mechanism.

\section{Weak symmetry breaking and the local normal form}
\label{sec:weak-unfolding}

The quotient analysis of Section~\ref{sec:emergent-quotient} concerns the exact
symmetric face $\varepsilon=0$.
We now ask how the missing signed direction reappears when that symmetry is
weakly broken.
We first derive the orders
\(
\varepsilon\lambda
\)
and
\(
\lambda^2
\)
directly from the observed statistical experiment.
We then show that a hidden latent involution provides one sufficient mechanism
for producing the corresponding jets.

\subsection{Observed Hellinger two-jet}
\label{subsec:hellinger-normal-form}

Let $\varpi$ dominate the observed laws in a neighbourhood of $(0,0)$ and
write
\begin{equation}
  \Phi(\varepsilon,\lambda)
  =
  2\sqrt{p_{\varepsilon,\lambda}}
  \in
  L^2(\varpi)
  \label{eq:observed-Hellinger-map}
\end{equation}
for the Hellinger embedding of the observed experiment.
We assume that $\Phi$ is twice continuously Fr\'echet differentiable at the
symmetric point as an $L^2(\varpi)$-valued map.
For a physical one-sided control parameter $\varepsilon\geq0$, the same
statements use one-sided derivatives; a signed extension is needed only when
we invoke the standard cross-cap terminology.

Assume further that the limiting observed family has the exact reflection
symmetry
\begin{equation}
  \Phi(0,\lambda)
  =
  \Phi(0,-\lambda)
  \label{eq:observed-reflection-Hellinger}
\end{equation}
for $|\lambda|$ sufficiently small.
Hence
\begin{equation}
  \partial_\lambda\Phi(0,0)=0.
  \label{eq:lambda-Hellinger-zero}
\end{equation}
Define the two Hellinger jets
\begin{equation}
  \mathfrak h_-
  =
  \partial_{\varepsilon\lambda}^2
  \Phi(0,0),
  \qquad
  \mathfrak h_+
  =
  \partial_{\lambda\lambda}^2
  \Phi(0,0).
  \label{eq:Hellinger-two-jets}
\end{equation}

\begin{theorem}[Observed weak-symmetry-breaking normal form]
\label{thm:weak-breaking-normal-form}
Under the assumptions above,
\begin{equation}
\begin{split}
  \Phi(\varepsilon,\lambda)
  -
  \Phi(\varepsilon,0)
  ={}&
  \varepsilon\lambda\,\mathfrak h_-
  +
  \frac{\lambda^2}{2}\,\mathfrak h_+
  \\
  &+
  o_{L^2(\varpi)}
  \left(
    \varepsilon|\lambda|+\lambda^2
  \right)
\end{split}
  \label{eq:observed-Hellinger-normal-form}
\end{equation}
as $(\varepsilon,\lambda)\to(0,0)$.
Thus the first signed direction that can reappear away from the symmetric
face is mixed, whereas the first direction surviving on the symmetric face is
even in $\lambda$.
\end{theorem}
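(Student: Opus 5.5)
The plan is to use a second-order Taylor (Peano) expansion of the $L^2(\varpi)$-valued map $\Phi$ at $(0,0)$, applied to both $\Phi(\varepsilon,\lambda)$ and $\Phi(\varepsilon,0)$. Twice Fréchet differentiability at the symmetric point gives
\[
\Phi(\varepsilon,\lambda)=\Phi(0,0)+\varepsilon\,\partial_\varepsilon\Phi+\lambda\,\partial_\lambda\Phi+\tfrac12\bigl(\varepsilon^2\partial^2_{\varepsilon\varepsilon}\Phi+2\varepsilon\lambda\,\mathfrak h_-+\lambda^2\mathfrak h_+\bigr)+o_{L^2}(\varepsilon^2+\lambda^2),
\]
with all derivatives evaluated at $(0,0)$. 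I will subtract the same expansion at $(\varepsilon,0)$. The terms $\Phi(0,0)$, $\varepsilon\partial_\varepsilon\Phi$ and $\tfrac12\varepsilon^2\partial^2_{\varepsilon\varepsilon}\Phi$ cancel. By \eqref{eq:lambda-Hellinger-zero}, which follows from differentiating \eqref{eq:observed-Hellinger-reflection} at $\lambda=0$, the term $\lambda\partial_\lambda\Phi$ vanishes. What remains is $\varepsilon\lambda\,\mathfrak h_-+\tfrac{\lambda^2}{2}\mathfrak h_+$ plus a remainder. For the one-sided case $\varepsilon\ge0$ the same argument runs with one-sided derivatives.

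The main obstacle is the size of the remainder. Naive Peano only gives $o(\varepsilon^2+\lambda^2)$, which is not $o(\varepsilon|\lambda|+\lambda^2)$ when $|\lambda|\ll\varepsilon$. To get the sharper bound I will not expand $\Phi(\varepsilon,\cdot)$ around $(0,0)$ in both variables. Instead I will write the difference as an integral in $\lambda$ alone:
\[
\Phi(\varepsilon,\lambda)-\Phi(\varepsilon,0)=\int_0^1\lambda\,\partial_\lambda\Phi(\varepsilon,t\lambda)\,dt .
\]
This is legitimate because $\Phi$ is $C^2$, hence $C^1$, in a neighbourhood. I then expand $\partial_\lambda\Phi$ to first order at $(0,0)$, using Fréchet differentiability of $\partial_\lambda\Phi$ there:
\[
\partial_\lambda\Phi(\varepsilon,t\lambda)=\partial_\lambda\Phi(0,0)+\varepsilon\,\mathfrak h_-+t\lambda\,\mathfrak h_++o(|\varepsilon|+|\lambda|),
\]
with the remainder uniform in $t\in[0,1]$. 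Integrating gives $\varepsilon\lambda\,\mathfrak h_-+\tfrac{\lambda^2}{2}\mathfrak h_++|\lambda|\,o(|\varepsilon|+|\lambda|)$. This remainder is $o(\varepsilon|\lambda|+\lambda^2)$, which settles the estimate. The uniformity in $t$ holds because the little-$o$ is controlled by $\|(\varepsilon,t\lambda)\|\le|\varepsilon|+|\lambda|$. The Bochner integral of a continuous $L^2$-valued integrand causes no difficulty. If one only assumes twice differentiability at the point rather than $C^2$, first-order differentiability of $\partial_\lambda\Phi$ at $(0,0)$ still suffices, together with continuity of $\partial_\lambda\Phi$ near $(0,0)$ to justify the integral formula.

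The interpretive sentence needs no further argument. The only $\lambda$-odd contribution at this order is multiplied by $\varepsilon$, because $\partial_\lambda\Phi(0,0)=0$. Setting $\varepsilon=0$ leaves the even term $\tfrac{\lambda^2}{2}\mathfrak h_+$, which matches \eqref{eq:hellinger-jets}.
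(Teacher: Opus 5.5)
Your proposal is correct and follows the paper's proof essentially step for step. Both use the fundamental theorem of calculus in $\lambda$ alone, expand $\partial_\lambda\Phi$ to first order at $(0,0)$ uniformly in $t\in[0,1]$, drop the constant term via $\partial_\lambda\Phi(0,0)=0$, and integrate in $t$ to get the remainder $|\lambda|\,o(|\varepsilon|+|\lambda|)=o(\varepsilon|\lambda|+\lambda^2)$; your observation that a naive two-variable Peano expansion only gives $o(\varepsilon^2+\lambda^2)$ correctly explains why this one-variable route is needed.
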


\begin{proof}
By the fundamental theorem of calculus in the Hilbert space
$L^2(\varpi)$,
\(\Phi(\varepsilon,\lambda)-\Phi(\varepsilon,0)
=\lambda\int_0^1\partial_\lambda\Phi(\varepsilon,t\lambda)\,dt\).
Continuous Fr\'echet differentiability of
$\partial_\lambda\Phi$ gives, uniformly for $t\in[0,1]$,
\[
\partial_\lambda
\Phi(\varepsilon,t\lambda)
=
\partial_\lambda\Phi(0,0)
+
\varepsilon\mathfrak h_-
+
t\lambda\mathfrak h_+
+
o_{L^2}
\left(
  |\varepsilon|+|\lambda|
\right).
\]
Equation~\eqref{eq:lambda-Hellinger-zero} removes the constant term.
Integration over $t$ yields
\eqref{eq:observed-Hellinger-normal-form}.
\end{proof}

Let
\(
P_0=P_{0,0}
\)
and assume
\(
p_0>0
\)
on the support under consideration.
Differentiating the identity
\(
\|\Phi(\varepsilon,\lambda)\|_{L^2(\varpi)}^2=4
\)
shows that both vectors in
\eqref{eq:Hellinger-two-jets}
are orthogonal to
\(
\sqrt{p_0}
\).
They therefore admit unique score representatives
\begin{equation}
  \mathfrak h_-
  =
  \sqrt{p_0}\,J_-,
  \qquad
  \mathfrak h_+
  =
  \sqrt{p_0}\,J_+,
  \qquad
  J_-,J_+\in L_0^2(P_0).
  \label{eq:Jpm-def}
\end{equation}
The subscript records reflection parity:
$J_-$ is the mixed sign-restoring jet and $J_+$ the even quotient jet.
Equation~\eqref{eq:observed-Hellinger-normal-form} becomes
\begin{equation}
\begin{split}
  2\sqrt{p_{\varepsilon,\lambda}}
  -
  2\sqrt{p_{\varepsilon,0}}
  ={}&
  \sqrt{p_0}
  \left(
    \varepsilon\lambda J_-
    +
    \frac{\lambda^2}{2}J_+
  \right)
  \\
  &+
  o_{L^2(\varpi)}
  \left(
    \varepsilon|\lambda|+\lambda^2
  \right).
\end{split}
  \label{eq:hellinger-normal-form}
\end{equation}

On the symmetric face,
\begin{equation}
  2\sqrt{p_{0,\lambda}}
  -
  2\sqrt{p_{0,0}}
  =
  \frac{\lambda^2}{2}
  \sqrt{p_0}\,J_+
  +
  o_{L^2(\varpi)}(\lambda^2).
  \label{eq:hellinger-symmetric-face}
\end{equation}
Since
\(
\mu=\lambda^2/2
\),
the quotient score is $J_+$.
Thus the abstract even jet is exactly the first invariant tangent identified
in Section~\ref{subsec:invariant-jets}.

The normalized square-root likelihood-ratio form
\begin{equation}
  \sqrt{
    \frac{p_{\varepsilon,\lambda}}
         {p_{\varepsilon,0}}
  }
  =
  1
  +
  \frac12
  \left(
    \varepsilon\lambda J_-
    +
    \frac{\lambda^2}{2}J_+
  \right)
  +
  o_{L^2(P_0)}
  \left(
    \varepsilon|\lambda|+\lambda^2
  \right)
  \label{eq:normalized-Hellinger-normal-form}
\end{equation}
requires an additional local comparison between
$p_{\varepsilon,0}$ and $p_0$.
A convenient sufficient condition is given in
Appendix~\ref{app:hellinger-normal-form}.
The triangular-array theorem of Section~\ref{sec:local-asymptotics} assumes
the normalized expansion directly, so the geometric normal form itself does
not depend on that stronger comparison.

\subsection{A hidden-latent sufficient mechanism}
\label{subsec:anti-invariant-perturbation}

The observed theorem does not require a latent representation, but the hidden
involution of Section~\ref{subsec:hidden-involution} explains how the two jets
can arise.  Suppose
\begin{equation}
  P_{\varepsilon,\lambda}(dy)
  =
  \int K_\lambda(z,dy)\,\nu_\varepsilon(dz),
  \qquad
  K_{-\lambda}(z,\cdot)=K_\lambda(\iota z,\cdot),
  \label{eq:weak-latent-model}
\end{equation}
and, as a sufficient condition, that
\begin{equation}
  \frac{d\nu_\varepsilon}{d\nu_0}
  =1+\varepsilon a+o_{L^2}(\varepsilon),
  \qquad
  a\circ\iota=-a.
  \label{eq:latent-law-expansion}
\end{equation}
If the kernel admits a second-order operator expansion
$\mathcal K_\lambda=\mathcal K_0+\lambda\mathcal K_1+
\lambda^2\mathcal K_2/2+o(\lambda^2)$ in the weighted density topology of
Appendix~\ref{app:hellinger-normal-form}, equivariance gives
$\mathcal K_1 1=0$, $\mathcal K_0a=0$ and $\mathcal K_2a=0$.  Hence
\begin{equation}
  p_{\varepsilon,\lambda}-p_{\varepsilon,0}
  =
  \varepsilon\lambda\,\mathcal K_1a
  +\frac{\lambda^2}{2}\,\mathcal K_2 1
  +o(\varepsilon|\lambda|+\lambda^2),
  \label{eq:density-normal-form}
\end{equation}
and therefore
\begin{equation}
  J_- = \frac{\mathcal K_1a}{p_0},
  \qquad
  J_+ = \frac{\mathcal K_2 1}{p_0}.
  \label{eq:latent-Jpm-identification}
\end{equation}
Supplementary Appendix~S1 gives the quantitative operator assumptions and
remainder estimate.  The mechanism is sufficient rather than necessary; the
CIR--OU benchmark, in particular, is certified directly at the observed
likelihood level.

\subsection{Nuisance projection and efficient jets}
\label{subsec:efficient-jets}

Let
\begin{equation}
  \mathcal H
  =
  L^2_0(P_0)
  \label{eq:score-Hilbert}
\end{equation}
be the centered score Hilbert space.
Suppose that a regular nuisance parameter
\(
\eta\in\mathbb R^r
\)
is present and let
\(
\mathcal N\subset\mathcal H
\)
be the closed span of its score directions at the symmetric point.
Denote the orthogonal projection onto $\mathcal N$ by
\(
\Pi_{\mathcal N}
\).
The efficient odd and even jets are
\begin{equation}
  J_-^\perp
  =
  (I-\Pi_{\mathcal N})J_-,
  \qquad
  J_+^\perp
  =
  (I-\Pi_{\mathcal N})J_+.
  \label{eq:efficient-Jpm}
\end{equation}

Projecting the score representatives in
\eqref{eq:hellinger-normal-form} gives the effective local displacement
\begin{equation}
  \varepsilon\lambda J_-^\perp
  +
  \frac{\lambda^2}{2}J_+^\perp.
  \label{eq:efficient-displacement}
\end{equation}
In particular,
\begin{equation}
  S_{\lambda,\mathrm{eff}}(\varepsilon,0)
  =
  \varepsilon J_-^\perp
  +
  o_{L^2}(\varepsilon),
  \label{eq:efficient-lambda-score}
\end{equation}
and therefore
\begin{equation}
  I_{\lambda\lambda}^{\mathrm{eff}}
  (\varepsilon,0)
  =
  \varepsilon^2
  \|J_-^\perp\|^2
  +
  o(\varepsilon^2).
  \label{eq:efficient-lambda-information}
\end{equation}
On the symmetric quotient,
\begin{equation}
  S_{\mu,\mathrm{eff}}
  =
  J_+^\perp,
  \qquad
  I_{\mu\mu}^{\mathrm{eff}}
  =
  \|J_+^\perp\|^2.
  \label{eq:efficient-mu-information}
\end{equation}

The key non-degeneracy condition is
\begin{equation}
  \Delta_{\mathrm{eff}}
  :=
  \|J_-^\perp\|^2\|J_+^\perp\|^2
  -
  \langle J_-^\perp,J_+^\perp\rangle^2
  >0.
  \label{eq:effective-gram}
\end{equation}
It says more than the separate non-vanishing of the two jets.
After nuisance elimination, the sign-restoring tangent cannot be absorbed
into the quotient-identifiable tangent, and conversely.
The weak unfolding is therefore genuinely two-directional.

This formulation is invariant under changes of nuisance coordinates.
Indeed, modifying a representative of the mixed or even jet by a nuisance
score changes neither its class in
\(
\mathcal H/\mathcal N
\)
nor the Gram determinant
\eqref{eq:effective-gram}.
The effective data are the two classes
\begin{equation}
  [J_-],[J_+]
  \in
  \mathcal H/\mathcal N,
  \label{eq:Jpm-quotient-classes}
\end{equation}
for which $J_-^\perp$ and $J_+^\perp$ are the orthogonal representatives.

\subsection{The effective cross-cap boundary jet}
\label{subsec:cross-cap}

The two efficient jets admit a simple geometric organization.
Let
\(
\mathcal I_H:\mathcal H\to L^2(\varpi)
\)
be the Hellinger tangent isometry
\begin{equation}
  \mathcal I_H f=\sqrt{p_0}\,f,
  \qquad
  \|\mathcal I_H f\|_{L^2(\varpi)}
  =
  \|f\|_{L^2(P_0)},
  \label{eq:Hellinger-tangent-isometry}
\end{equation}
and let
\begin{equation}
  \mathcal E_H
  =
  \mathcal I_H(\mathcal H)/
  \mathcal I_H(\mathcal N)
  \label{eq:efficient-Hellinger-space}
\end{equation}
be the corresponding efficient Hellinger tangent space.
Consider the map
\begin{equation}
  \Phi^{\mathrm{eff}}(\varepsilon,\lambda)
  =
  \left(
    \varepsilon,\,
    \left[
      2\sqrt{p_{\varepsilon,\lambda}}
      -
      2\sqrt{p_{\varepsilon,0}}
    \right]
  \right)
  \in
  \mathbb R\oplus\mathcal E_H.
  \label{eq:effective-Hellinger-map}
\end{equation}
Here brackets denote the class modulo the nuisance Hellinger tangent space.
Its second jet at the symmetric point is, under the identification
\(\mathcal I_H:\mathcal H/\mathcal N\simeq\mathcal E_H\),
\begin{equation}
  j^2\Phi^{\mathrm{eff}}
  (\varepsilon,\lambda)
  =
  \left(
    \varepsilon,\,
    \varepsilon\lambda J_-^\perp
    +
    \frac{\lambda^2}{2}J_+^\perp
  \right).
  \label{eq:effective-Hellinger-2jet}
\end{equation}

\begin{proposition}[Cross-cap form of the non-degenerate unfolding]
\label{prop:cross-cap}
Under the assumptions of
Theorem~\ref{thm:weak-breaking-normal-form}, suppose in addition that
\(
\Delta_{\mathrm{eff}}>0
\).
Then, after an invertible linear change of coordinates in the two-dimensional
target plane
\(
\operatorname{span}\{J_-^\perp,J_+^\perp\}
\),
the essential second jet of
\(
\Phi^{\mathrm{eff}}
\)
is
\begin{equation}
  (\varepsilon,\lambda)
  \longmapsto
  \left(
    \varepsilon,\,
    \varepsilon\lambda,\,
    \frac{\lambda^2}{2}
  \right).
  \label{eq:cross-cap-normal-form}
\end{equation}
For a signed unfolding parameter this is the standard cross-cap $2$-jet up to
smooth linear rescalings.
If only $\varepsilon\geq0$ is physically available, the statistical family
realizes its one-sided source restriction.
\end{proposition}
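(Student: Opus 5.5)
The plan is to start from the second jet \eqref{eq:effective-Hellinger-2jet} of $\Phi^{\mathrm{eff}}$. This jet is already available: Theorem~\ref{thm:weak-breaking-normal-form} gives the expansion \eqref{eq:hellinger-normal-form}, and the class map modulo $\mathcal I_H(\mathcal N)$ is linear and bounded. The bracketed component of $\Phi^{\mathrm{eff}}$ is therefore the image under the quotient map of $\sqrt{p_0}(\varepsilon\lambda J_-+\tfrac{\lambda^2}{2}J_+)$ plus an $o(\varepsilon|\lambda|+\lambda^2)$ remainder. Under $\mathcal I_H:\mathcal H/\mathcal N\simeq\mathcal E_H$, the class of $J_\pm$ is represented by $J_\pm^\perp$. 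The first component $\varepsilon$ is exactly linear. Since $o(\varepsilon|\lambda|+\lambda^2)\subset o(\varepsilon^2+\lambda^2)$ (using $\varepsilon|\lambda|\le(\varepsilon^2+\lambda^2)/2$), the remainder does not affect the $2$-jet. This justifies \eqref{eq:effective-Hellinger-2jet}.

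Next I would use $\Delta_{\mathrm{eff}}>0$. By \eqref{eq:effective-gram}, the Gram matrix of $(J_-^\perp,J_+^\perp)$ is positive definite, so these two vectors are linearly independent. Their span $W$ is a two-dimensional subspace of $\mathcal H/\mathcal N$, and hence of $\mathcal E_H$ after applying $\mathcal I_H$. The $2$-jet takes values in $\mathbb R\oplus W$. Choose the linear isomorphism $L:W\to\mathbb R^2$ with $L J_-^\perp=e_1$ and $L J_+^\perp=e_2$; this is possible precisely because the two vectors form a basis. Applying $\mathrm{id}_{\mathbb R}\oplus L$ turns the jet into $(\varepsilon,\varepsilon\lambda,\lambda^2/2)$, which is \eqref{eq:cross-cap-normal-form}.

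The phrase ``essential second jet'' needs care. The orthogonal complement of $W$ in $\mathcal E_H$ receives only the remainder, which vanishes to second order, so the projection onto $W$ loses nothing at the level of $2$-jets. If one prefers an orthonormal frame, $L$ can instead be taken as the Gram–Schmidt map composed with the inverse square root of the Gram matrix. This is invertible, and its determinant is controlled by $\Delta_{\mathrm{eff}}^{-1/2}$, which makes the dependence on the non-degeneracy quantitative.

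For the cross-cap identification, I would compare with the Whitney umbrella $(x,y)\mapsto(x,xy,y^2)$. Taking $x=\varepsilon$ and $y=\lambda$ and rescaling the third coordinate by $1/2$ matches it exactly. When $\varepsilon$ ranges over a two-sided neighbourhood, this is the standard cross-cap $2$-jet up to linear rescaling. When only $\varepsilon\ge0$ is allowed, the same map restricted to the half-plane realises the one-sided source restriction.

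The only delicate point is the remainder. It must be stated as $o_{L^2}$ uniformly in the two variables, so that quotienting by $\mathcal N$, which is a closed subspace with a norm-one projection, preserves the order. This holds because $\Pi_{\mathcal N}$ is a contraction. Everything else is linear algebra.
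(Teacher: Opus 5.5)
Your proposal is correct and follows the paper's proof: $\Delta_{\mathrm{eff}}>0$ makes $J_-^\perp,J_+^\perp$ a basis of their span, and the linear change of target coordinates sending them to the coordinate basis turns the jet \eqref{eq:effective-Hellinger-2jet} into $(\varepsilon,\varepsilon\lambda,\lambda^2/2)$. Your check that the $o(\varepsilon|\lambda|+\lambda^2)$ remainder survives the contractive nuisance projection and does not affect the $2$-jet is a sound expansion of a step the paper takes for granted; the orthonormal-frame aside is garbled as phrased but not needed and can be dropped.
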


\begin{proof}
Condition \eqref{eq:effective-gram} implies that
\(
J_-^\perp
\)
and
\(
J_+^\perp
\)
form a basis of their two-dimensional span.
Choose target coordinates sending these two vectors to the coordinate basis.
Equation \eqref{eq:effective-Hellinger-2jet} then becomes
\eqref{eq:cross-cap-normal-form}, up to nonzero linear rescalings of the two
target coordinates.
\end{proof}

No claim of full $\mathcal A$-equivalence of the infinite-dimensional
statistical map is needed here.
The statement concerns only the essential $2$-jet after projection onto the
two statistically active directions.
Its role is to organize the local geometry and, in particular, to expose the
distinguished layer
\begin{equation}
  |\lambda|\asymp\varepsilon,
  \label{eq:cross-cap-layer}
\end{equation}
in which the mixed and even displacements have the same order.

For fixed $\varepsilon>0$, the slice of
\eqref{eq:cross-cap-normal-form} is a regular parabola whose tangent at
$\lambda=0$ is the symmetry-breaking direction.
At $\varepsilon=0$, the first coordinate in the statistical plane collapses
and only the quotient half-line remains.
The cross-cap therefore provides a geometric interpolation between a regular
signed family and its limiting reflection quotient.

\subsection{Exact certification in the CIR--OU benchmark}
\label{subsec:benchmark-unfolding-certification}

For the benchmark of Section~\ref{sec:benchmark}, the observed Hellinger
hypotheses are verified directly from the likelihood.
The stronger latent Radon--Nikodym criterion of Supplementary Appendix~S1 is
therefore not needed.
The fixed-horizon DQM calculation and its one-transition specialization in
Appendix~\ref{app:cir-ou-formulas} identify
\begin{equation}
  J_-^\perp
  =
  u(d)\,\mathfrak r^\perp,
  \qquad
  J_+^\perp
  =
  V(d)\,k^\perp,
  \label{eq:benchmark-Jpm-identification}
\end{equation}
where
\begin{equation}
  u(d)=\frac{1-e^{-d}}{d},
  \qquad
  V(d)=\frac{2(d-1+e^{-d})}{d^2}.
  \label{eq:uV-recall}
\end{equation}
The benchmark Gram determinant of
\(
(\mathfrak r^\perp,k^\perp)
\)
is strictly positive for every $c>0$ by
Proposition~\ref{prop:benchmark-gram-positive}.
Since $u(d)>0$ and $V(d)>0$ for $d>0$,
\begin{equation}
  \Delta_{\mathrm{eff}}
  =
  u(d)^2V(d)^2
  \det
  \operatorname{Gram}
  \bigl(
    \mathfrak r^\perp,k^\perp
  \bigr)
  >0
  \qquad
  (d,c>0).
  \label{eq:benchmark-effective-gram}
\end{equation}
Thus the solvable model satisfies the non-degeneracy hypothesis of
Proposition~\ref{prop:cross-cap} throughout the fixed positive
$(d,c)$ domain considered here.

In the transition layer
\(
\lambda=\varepsilon\zeta
\),
the effective displacement becomes
\begin{equation}
  \varepsilon^2
  \left[
    \zeta J_-^\perp
    +
    \frac{\zeta^2}{2}J_+^\perp
  \right]
  +o_{L^2}(\varepsilon^2).
  \label{eq:critical-layer-displacement}
\end{equation}
This identifies the critical geometric scale, of order $\varepsilon^2$.
Section~\ref{sec:local-asymptotics} now compares that scale with the ordinary
$n^{-1/2}$ statistical resolution.

\section{Local asymptotic resolution of the unfolding}
\label{sec:local-asymptotics}

Section~\ref{sec:weak-unfolding} identified the effective local displacement
\(\varepsilon\lambda J_-^\perp+\lambda^2J_+^\perp/2\).
For a sample of size $n$, its resolvability is governed by a single
two-dimensional resolution map, which yields three distinct asymptotic regimes.

\subsection{The statistical resolution map}
\label{subsec:resolution-map}

For independent observations, the natural local scale of a regular Hellinger
tangent is $n^{-1/2}$.
The weak-unfolding normal form therefore suggests comparing
\(\varepsilon_n\lambda_n\) and \(\lambda_n^2/2\) after multiplication by
$\sqrt n$.
Define
\begin{equation}
  v(\varepsilon,\lambda)
  =
  \begin{pmatrix}
    \varepsilon\lambda\\[1mm]
    \lambda^2/2
  \end{pmatrix},
  \qquad
  u_n
  =
  \sqrt n\,
  v(\varepsilon_n,\lambda_n).
  \label{eq:resolution-vector}
\end{equation}
Thus the fundamental statistical resolution map is
\begin{equation}
  (\varepsilon_n,\lambda_n)
  \longmapsto
  \sqrt n
  \begin{pmatrix}
    \varepsilon_n\lambda_n\\[1mm]
    \lambda_n^2/2
  \end{pmatrix}.
  \label{eq:statistical-resolution-map}
\end{equation}

The relative visibility of the two components is controlled by
\begin{equation}
  \tau_n
  =
  \sqrt n\,\varepsilon_n^2.
  \label{eq:tau-def}
\end{equation}
Indeed, in the geometric transition layer
\(
\lambda_n=\varepsilon_n\zeta
\),
equation \eqref{eq:resolution-vector} becomes
\begin{equation}
  u_n
  =
  \tau_n
  \begin{pmatrix}
    \zeta\\[1mm]
    \zeta^2/2
  \end{pmatrix}.
  \label{eq:critical-resolution-vector}
\end{equation}
The quantity $\tau_n$ therefore compares the order-$\varepsilon_n^2$
cross-cap displacement with the ordinary statistical resolution
$n^{-1/2}$.

\subsection{A triangular-array local asymptotic theorem}
\label{subsec:master-triangular-array}

We formulate the result after nuisance elimination.
Precisely, when regular nuisance parameters are present, the theorem is
applied first to the joint target--nuisance experiment and then to its
efficient Gaussian quotient; equivalently one follows the least-favourable
local nuisance path.
Appendix~\ref{app:local-asymptotics} gives this reduction explicitly.
For each $\varepsilon$, let
\(V_\varepsilon=(J_{-,\varepsilon}^\perp,J_{+,\varepsilon}^\perp)^\top\)
be a centered two-component efficient score under the baseline law
$P_{\varepsilon,0}$, and let
\begin{equation}
  G_\varepsilon
  =
  \mathbb E_{\varepsilon,0}
  \left[
    V_\varepsilon V_\varepsilon^\top
  \right].
  \label{eq:G-epsilon}
\end{equation}
Theorem~\ref{thm:weak-breaking-normal-form} identifies the two limiting
Hellinger directions at the symmetric point.
Convergence of finite-$\varepsilon$ efficient score representatives to these
directions is an additional local regularity requirement; it is verified
directly in the CIR--OU benchmark and is included in the assumptions below.

\begin{theorem}[Triangular-array LAN in the two jet coordinates]
\label{thm:two-jet-LAN}
Let $\varepsilon_n\downarrow0$ and $\lambda_n\to0$.
Assume that, uniformly along local sequences for which
\(
\sqrt n\,\|v(\varepsilon_n,\lambda_n)\|
\)
remains bounded,
\begin{equation}
  \sqrt{
    \frac{
      dP_{\varepsilon_n,\lambda_n}
    }{
      dP_{\varepsilon_n,0}
    }
  }
  =
  1
  +
  \frac12
  v(\varepsilon_n,\lambda_n)^\top
  V_{\varepsilon_n}
  +
  r_{n},
  \label{eq:triangular-hellinger}
\end{equation}
with
\begin{equation}
  n\,\mathbb E_{\varepsilon_n,0}[r_n^2]
  \longrightarrow0.
  \label{eq:triangular-remainder}
\end{equation}
Assume further that
\begin{equation}
  G_{\varepsilon_n}
  \longrightarrow
  G,
  \qquad
  G>0,
  \label{eq:G-limit}
\end{equation}
and that the triangular array
\(
V_{\varepsilon_n,1},\ldots,V_{\varepsilon_n,n}
\)
satisfies a Lindeberg condition; a uniform
$L^{2+\eta}$ bound for some $\eta>0$ is sufficient.

If
\begin{equation}
  u_n
  =
  \sqrt n\,
  v(\varepsilon_n,\lambda_n)
  \longrightarrow u\in\mathbb R^2,
  \label{eq:u-limit}
\end{equation}
then, under
\(
P_{\varepsilon_n,0}^{\otimes n}
\),
\begin{equation}
  \log
  \frac{
    dP_{\varepsilon_n,\lambda_n}^{\otimes n}
  }{
    dP_{\varepsilon_n,0}^{\otimes n}
  }
  \ \Longrightarrow\
  u^\top Z
  -
  \frac12u^\top Gu,
  \qquad
  Z\sim N(0,G).
  \label{eq:master-LAN-limit}
\end{equation}
Equivalently, the local limit experiment is the two-dimensional Gaussian
shift with information matrix $G$, restricted to the limiting image of the
resolution map \eqref{eq:statistical-resolution-map}.
\end{theorem}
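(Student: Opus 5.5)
The plan follows the classical Le Cam route from a differentiable-in-quadratic-mean expansion to LAN, carried out for a triangular array in which the baseline law $P_{\varepsilon_n,0}$ itself moves with $n$. Write $W_{ni}=2\bigl(\sqrt{dP_{\varepsilon_n,\lambda_n}/dP_{\varepsilon_n,0}}(Y_i)-1\bigr)$. By \eqref{eq:triangular-hellinger},
\[
W_{ni}=v_n^\top V_{\varepsilon_n,i}+2r_{n,i},\qquad v_n=v(\varepsilon_n,\lambda_n).
\]
The log-likelihood ratio is $\sum_i 2\log(1+W_{ni}/2)$. The three steps are to expand this sum to second order, identify the linear and quadratic terms, and control the remainder.

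The linear term comes first. Centering holds because $V_{\varepsilon}$ is centered under $P_{\varepsilon,0}$, so $\sum_i W_{ni}=u_n^\top n^{-1/2}\sum_i V_{\varepsilon_n,i}+2\sum_i r_{n,i}$. The remainder piece needs its mean handled. Its variance is at most $4n\mathbb E[r_n^2]\to0$ by \eqref{eq:triangular-remainder}. For its mean, use the identity $\mathbb E_{\varepsilon_n,0}[\sqrt{dP/dP_0}]=1-\tfrac12 H^2$, where $H^2$ is the squared Hellinger distance; this gives
\[
n\,\mathbb E[W_{n1}]=-n\,\mathbb E[(W_{n1}/2)^2]+o(1)=-\tfrac14 u_n^\top G_{\varepsilon_n}u_n+o(1).
\]
Here the bound $n\,\mathbb E[r_n v_n^\top V]\le \sqrt{n\mathbb E r_n^2}\,\|u_n\|\,\|G\|^{1/2}\to0$ uses boundedness of $u_n$, and absolute continuity is handled as in van der Vaart's Theorem~7.2. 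Next apply the Lindeberg--Feller CLT to the triangular array $n^{-1/2}\sum_i V_{\varepsilon_n,i}$, using $G_{\varepsilon_n}\to G$ and the Lindeberg (or $L^{2+\eta}$) hypothesis. This gives $\sum_i W_{ni}\Rightarrow u^\top Z-\tfrac14u^\top Gu$ with $Z\sim N(0,G)$.

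The quadratic term comes next. Show $\sum_i W_{ni}^2\to u^\top G u$ in probability. The main part is $n^{-1}\sum_i (u_n^\top V_{\varepsilon_n,i})^2$. Its expectation is $u_n^\top G_{\varepsilon_n}u_n\to u^\top Gu$, and the Lindeberg condition yields a weak law of large numbers for such triangular arrays of uniformly integrable squares (truncate at level $\delta\sqrt n$ and compute variance). Cross terms with $r$ vanish by Cauchy--Schwarz. Also $\max_i|W_{ni}|\to0$ in probability, again from Lindeberg together with $n\mathbb E r_n^2\to0$. Then use $2\log(1+x/2)=x-x^2/4+x^2R(x)$ with $R(x)\to0$ as $x\to0$. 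This gives
\[
\sum_i 2\log(1+W_{ni}/2)=\sum_i W_{ni}-\tfrac14\sum_i W_{ni}^2+o_P(1)\Rightarrow u^\top Z-\tfrac12u^\top Gu.
\]
That is \eqref{eq:master-LAN-limit}. The Gaussian-shift description then follows from Le Cam's first and third lemmas, since the limit is the log-likelihood of $N(u,G^{-1})$-type shift experiments indexed by $u$ in the closure of the image of the resolution map.

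The expected main obstacle is the uniformity of this triangular array. The baseline $P_{\varepsilon_n,0}$ and the score $V_{\varepsilon_n}$ both change with $n$, so the usual DQM-at-a-fixed-point arguments do not apply verbatim. The mean-correction step in particular needs the Hellinger identity under the moving baseline, including the mass of $P_{\varepsilon_n,\lambda_n}$ singular to $P_{\varepsilon_n,0}$, whose $n$-fold contribution must be $o(1)$. I would first try to bound that singular mass by $n\|\sqrt{p_{\lambda_n}}-\sqrt{p_0}-\tfrac12\sqrt{p_0}\,v_n^\top V\|^2$, interpreting \eqref{eq:triangular-hellinger} in the $L^2(\varpi)$ sense, which \eqref{eq:triangular-remainder} makes $o(1)$. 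Nuisance reduction is not part of this proof; it is deferred to the efficient-quotient argument of Appendix~\ref{app:local-asymptotics}.
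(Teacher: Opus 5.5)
Your proposal is correct and follows essentially the same route as the paper. The paper's proof (Lemmas~\ref{lem:appB-remainder-mean} and \ref{lem:appB-quadratic-LR}) likewise uses normalization of the density ratio to show that the mean of the Hellinger remainder carries half of the quadratic drift, expands $2\log(1+w)$ to second order using $\max_i|w_{n,i}|\to0$, and combines the Lindeberg--Feller CLT with a Lindeberg-based law of large numbers for the empirical Gram matrix. Your singular-mass caveat is a genuine refinement: the paper simply writes $\mathbb E_n[(1+w_n)^2]=1$, which tacitly assumes $P_{\varepsilon_n,\lambda_n}\ll P_{\varepsilon_n,0}$ (or singular mass $o(n^{-1})$), and your $L^2(\varpi)$ reading of the remainder is exactly what makes that step legitimate.
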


\begin{proof}
Set
\begin{equation}
  \Delta_n
  =
  \frac1{\sqrt n}
  \sum_{i=1}^n
  V_{\varepsilon_n,i}.
  \label{eq:Delta-n}
\end{equation}
By \eqref{eq:G-limit} and the triangular-array Lindeberg condition,
\begin{equation}
  \Delta_n
  \Longrightarrow
  Z\sim N(0,G).
  \label{eq:Delta-CLT}
\end{equation}
The Hellinger expansion
\eqref{eq:triangular-hellinger} and
\eqref{eq:triangular-remainder} imply, by
Lemma~\ref{lem:appB-quadratic-LR}, the quadratic log-likelihood expansion
\begin{equation}
  \Lambda_n
  =
  u_n^\top\Delta_n
  -
  \frac12
  u_n^\top
  \left[
    \frac1n
    \sum_{i=1}^n
    V_{\varepsilon_n,i}
    V_{\varepsilon_n,i}^{\top}
  \right]
  u_n
  +
  o_{P_{\varepsilon_n,0}}(1).
  \label{eq:LAN-quadratic-expansion}
\end{equation}
The empirical Gram matrix converges in probability to $G$ under the same
moment assumptions.
Combining this convergence with
\eqref{eq:u-limit} and \eqref{eq:Delta-CLT} gives
\eqref{eq:master-LAN-limit}.
\end{proof}

The theorem isolates the key point:
the asymptotic experiment is regular in the two jet coordinates
\((\varepsilon\lambda,\lambda^2/2)\), while the original signed parameter $\lambda$ traces a nonlinear subset of
that regular Gaussian experiment.

\subsection{Three asymptotic regimes}
\label{subsec:three-regimes}

The three regimes follow directly from
Theorem~\ref{thm:two-jet-LAN}.

\paragraph{Regular regime: $\tau_n\to\infty$.}
Choose
\begin{equation}
  \lambda_n
  =
  \frac{t}{\sqrt n\,\varepsilon_n},
  \qquad
  t\in\mathbb R.
  \label{eq:regular-local-scale}
\end{equation}
Then
\begin{equation}
  u_n
  =
  \begin{pmatrix}
    t\\[1mm]
    t^2/(2\tau_n)
  \end{pmatrix}
  \longrightarrow
  \begin{pmatrix}
    t\\0
  \end{pmatrix}.
  \label{eq:regular-u-limit}
\end{equation}
Only the sign-restoring direction remains visible.
The local experiment is ordinary one-dimensional LAN in the signed parameter
$\lambda$, with information
\(
g_{11}=\|J_-^\perp\|^2
\).
The corresponding local scale is
\begin{equation}
  \lambda_n
  \asymp
  \frac{1}{\sqrt n\,\varepsilon_n}.
  \label{eq:regular-rate}
\end{equation}

\paragraph{Quotient regime: $\tau_n\to0$.}
Set
\begin{equation}
  \lambda_n
  =
  n^{-1/4}z,
  \qquad
  z\in\mathbb R.
  \label{eq:quotient-local-scale}
\end{equation}
Then
\begin{equation}
  u_n
  =
  \begin{pmatrix}
    n^{1/4}\varepsilon_n z\\[1mm]
    z^2/2
  \end{pmatrix}
  \longrightarrow
  \begin{pmatrix}
    0\\
    z^2/2
  \end{pmatrix}.
  \label{eq:quotient-u-limit}
\end{equation}
The limiting likelihood depends on $z$ only through $z^2$.
Writing
\begin{equation}
  v=\frac{z^2}{2}\geq0
  \label{eq:quotient-v}
\end{equation}
turns the limit into an ordinary one-sided Gaussian shift in the identifiable
coordinate $v$, or equivalently in the local quotient coordinate
\(
\sqrt n\,\mu_n
\).
Thus the $n^{-1/4}$ scale in $\lambda$ is the pull-back of the regular
$n^{-1/2}$ scale in
\(
\mu=\lambda^2/2
\).

\paragraph{Critical regime: $\tau_n\to\tau\in(0,\infty)$.}
Set
\begin{equation}
  \lambda_n
  =
  \varepsilon_n\zeta.
  \label{eq:critical-lambda}
\end{equation}
Then
\begin{equation}
  u_n
  \longrightarrow
  \tau
  \begin{pmatrix}
    \zeta\\[1mm]
    \zeta^2/2
  \end{pmatrix}.
  \label{eq:critical-u-limit}
\end{equation}
Let $Z(\cdot)$ denote the isonormal Gaussian process on
\(
\operatorname{span}\{J_-^\perp,J_+^\perp\}
\),
and define
\begin{equation}
  h_\zeta^\perp
  =
  \zeta J_-^\perp
  +
  \frac{\zeta^2}{2}J_+^\perp.
  \label{eq:h-zeta}
\end{equation}
The limiting log-likelihood process is
\begin{equation}
  \Lambda(\zeta)
  =
  \tau Z(h_\zeta^\perp)
  -
  \frac{\tau^2}{2}
  \|h_\zeta^\perp\|^2.
  \label{eq:critical-curved-LAN}
\end{equation}
Hence the critical experiment is a curved Gaussian subexperiment of the ambient
two-dimensional LAN limit, with mean constrained
to the parabola
\begin{equation}
  m_\tau(\zeta)
  =
  \tau
  \left(
    \zeta J_-^\perp
    +
    \frac{\zeta^2}{2}J_+^\perp
  \right).
  \label{eq:critical-mean-curve}
\end{equation}
If
\(
\Delta_{\mathrm{eff}}>0
\),
this curve is not contained in an affine line.
The critical limit is therefore a genuinely curved one-parameter Gaussian
subexperiment; scalar LAN in $\zeta$ is not being claimed.
Under the compact-uniform Sobolev remainder condition stated in
Appendix~\ref{app:iid-process-limit}, this convergence holds in
$C(K)$ for every compact interval $K\subset\mathbb R$.

The three regimes can be summarized as
\begin{equation}
  \boxed{
  \begin{array}{ccl}
    \tau_n\to\infty
    &:&
    \text{regular signed experiment},\\[1mm]
    \tau_n\to\tau\in(0,\infty)
    &:&
    \text{curved Gaussian crossover},\\[1mm]
    \tau_n\to0
    &:&
    \text{reflection quotient}.
  \end{array}}
  \label{eq:three-regime-summary}
\end{equation}

\subsection{Curvature of the critical Gaussian experiment}
\label{subsec:critical-curvature}

The Gram non-degeneracy has a quantitative interpretation in terms of
statistical curvature.
Let
\begin{equation}
  G
  =
  \begin{pmatrix}
    g_{11} & g_{12}\\
    g_{12} & g_{22}
  \end{pmatrix}
  =
  \operatorname{Gram}
  (J_-^\perp,J_+^\perp).
  \label{eq:critical-G}
\end{equation}
In a Gaussian shift with identity covariance, Efron's statistical curvature
of a one-dimensional mean curve agrees with its Euclidean curvature in the
Fisher metric \cite{Efron1975}.
For the critical curve \eqref{eq:critical-mean-curve},
\begin{equation}
  m_\tau'(0)=\tau J_-^\perp,
  \qquad
  m_\tau''(0)=\tau J_+^\perp.
  \label{eq:critical-derivatives}
\end{equation}
Therefore
\begin{equation}
  \gamma_{\mathrm{crit}}(\tau)
  =
  \frac{
    \sqrt{
      \|m_\tau'(0)\|^2
      \|m_\tau''(0)\|^2
      -
      \langle
        m_\tau'(0),m_\tau''(0)
      \rangle^2
    }
  }{
    \|m_\tau'(0)\|^3
  }
  =
  \frac1{\tau}
  \frac{
    \sqrt{\det G}
  }{
    g_{11}^{3/2}
  }.
  \label{eq:critical-Efron-curvature}
\end{equation}
Thus
\(
\Delta_{\mathrm{eff}}>0
\)
is equivalent to strictly positive critical statistical curvature.
The factor $1/\tau$ has a simple interpretation:
multiplying the entire Gaussian mean curve by $\tau$ increases its statistical
size while decreasing its geometric curvature by the inverse scale factor.

This curvature belongs to the \emph{critical Gaussian subexperiment}.
It must not be confused with the intrinsic Fisher curvature of the limiting
quotient studied later in
Section~\ref{sec:quotient-information-geometry}.
The two objects live on different statistical manifolds and depend on
different jets.

\subsection{Exact crossover in the CIR--OU benchmark}
\label{subsec:benchmark-crossover}

For the texture--coherence benchmark,
\begin{equation}
  \varepsilon_n
  =
  \alpha_n^{-1/2},
  \qquad
  \tau_n
  =
  \frac{\sqrt n}{\alpha_n}.
  \label{eq:benchmark-tau}
\end{equation}
Hence
\begin{equation}
  \boxed{
  \begin{array}{ccl}
    \alpha_n\ll\sqrt n
    &:&
    \text{regular signed regime},\\[1mm]
    \alpha_n\asymp\sqrt n
    &:&
    \text{parabolic Gaussian regime},\\[1mm]
    \alpha_n\gg\sqrt n
    &:&
    \mathbb Z_2\text{ quotient regime}.
  \end{array}}
  \label{eq:benchmark-regimes}
\end{equation}
In the last regime,
\(
\lambda_n\asymp n^{-1/4}
\),
whereas in the first one
\(
\lambda_n\asymp\sqrt{\alpha_n/n}
\).

For independent repetitions of the stationary pair experiment,
Section~\ref{sec:benchmark} and
Proposition~\ref{prop:benchmark-gram-positive} give
\begin{equation}
  J_-^\perp
  =
  u(d)\,\mathfrak r^\perp,
  \qquad
  J_+^\perp
  =
  V(d)\,k^\perp.
  \label{eq:benchmark-critical-jets}
\end{equation}
Writing
\begin{equation}
  R
  =
  \|\mathfrak r^\perp\|^2,
  \qquad
  K
  =
  \|k^\perp\|^2,
  \qquad
  C
  =
  \langle
    \mathfrak r^\perp,k^\perp
  \rangle,
  \label{eq:RKC}
\end{equation}
the unscaled curvature factor is
\begin{equation}
  \gamma_0(d,c)
  =
  \frac{
    V(d)\sqrt{RK-C^2}
  }{
    u(d)^2R^{3/2}
  }
  >0.
  \label{eq:gamma0-RKC}
\end{equation}
Using the exact Gaussian moments from
Appendix~\ref{app:cir-ou-formulas},
this simplifies to
\begin{equation}
  \gamma_0(d,c)
  =
  \frac{V(d)\sqrt{2}}{u(d)^2}
  \,
  \frac{
    \sqrt{3\rho^4+2\rho^2+3}
  }{
    (3\rho^2+2)^{3/2}
  },
  \qquad
  \rho=e^{-c}.
  \label{eq:gamma0-explicit}
\end{equation}
If
\begin{equation}
  \frac{\alpha_n}{\sqrt n}
  \longrightarrow
  \kappa\in(0,\infty),
  \label{eq:kappa-critical}
\end{equation}
then
\(
\tau=1/\kappa
\)
and the curvature of the limiting Gaussian experiment is
\begin{equation}
  \gamma_{\mathrm{crit}}
  =
  \kappa\,\gamma_0(d,c)
  >0.
  \label{eq:benchmark-critical-curvature}
\end{equation}

The independent-repetition problem is therefore completely described by the
same geometry already visible at the one-experiment level:
the cross-cap two-jet is magnified by the statistical resolution
$\sqrt n$.
Section~\ref{sec:predictive} tests whether this crossover survives along a
single correlated trajectory.

\section{Predictive experiments: persistence under memory}
\label{sec:predictive}

The independent-repetition analysis of
Section~\ref{sec:local-asymptotics} isolates the local geometry of the weak
unfolding, but the original texture--coherence model is a time series.
We now show that the same two-jet mechanism survives when the likelihood is
factorized predictively along a single dependent trajectory.
In this factorization, memory changes the inner product seen by the two jets,
but not their parity structure or the central $\sqrt n$ resolution.

\subsection{Predictive likelihood and martingale jets}
\label{subsec:predictive-factorization}

Let
\(
(Y_k)_{k\geq0}
\)
be a stationary observed process and
\(
\mathcal F^Y_k=\sigma(Y_j:j\leq k)
\)
its natural filtration.
For a parameter pair $(\varepsilon,\lambda)$, factorize the observed
likelihood into predictive densities,
\begin{equation}
  p_{\varepsilon,\lambda}(Y_{0:n})
  =
  p_{\varepsilon,\lambda}(Y_0)
  \prod_{k=1}^n
  p_{\varepsilon,\lambda}
  (Y_k\mid\mathcal F^Y_{k-1}).
  \label{eq:predictive-factorization}
\end{equation}
The initial term is asymptotically negligible under stationarity and will be
suppressed below.

We concentrate on the critical geometric layer
\begin{equation}
  \lambda=\varepsilon\zeta,
  \qquad
  \delta=\varepsilon^2,
  \label{eq:predictive-critical-layer}
\end{equation}
with $\zeta$ in a fixed compact set.
Define the predictive density ratio
\begin{equation}
  R_{k,\varepsilon,\zeta}
  =
  \frac{
    p_{\varepsilon,\varepsilon\zeta}
    (Y_k\mid\mathcal F^Y_{k-1})
  }{
    p_{\varepsilon,0}
    (Y_k\mid\mathcal F^Y_{k-1})
  }.
  \label{eq:predictive-ratio}
\end{equation}
The predictive analogue of the local normal form is the following
quadratic-mean density-ratio expansion:
\begin{equation}
  R_{k,\varepsilon,\zeta}
  =
  1
  +
  \delta H_{k,\zeta}
  +
  \delta r_{k,\varepsilon,\zeta},
  \qquad
  H_{k,\zeta}
  =
  \zeta J_{-,k}
  +
  \frac{\zeta^2}{2}J_{+,k}.
  \label{eq:predictive-DQM}
\end{equation}
The two coefficients
$J_{-,k}$ and $J_{+,k}$ are derivatives of normalized predictive
densities.
Differentiating the conditional normalization
\(
\int
p_{\varepsilon,\lambda}
(y\mid\mathcal F^Y_{k-1})\,dy
=
1
\)
at the symmetric point therefore gives
\begin{equation}
  \mathbb E[
    J_{-,k}\mid\mathcal F^Y_{k-1}
  ]
  =
  0,
  \qquad
  \mathbb E[
    J_{+,k}\mid\mathcal F^Y_{k-1}
  ]
  =
  0.
  \label{eq:AB-martingale}
\end{equation}
Hence
\begin{equation}
  \mathbb E[
    H_{k,\zeta}
    \mid
    \mathcal F^Y_{k-1}
  ]
  =
  0.
  \label{eq:H-martingale}
\end{equation}
Finally, normalization of
\eqref{eq:predictive-DQM}
then implies
\begin{equation}
  \mathbb E[
    r_{k,\varepsilon,\zeta}
    \mid
    \mathcal F^Y_{k-1}
  ]
  =
  0.
  \label{eq:predictive-remainder-centered}
\end{equation}
Thus both the leading jets and the normalized remainder inherit a
martingale-difference structure even though the observations themselves are
dependent.

The martingale structure is the key simplification.
Long-range dependence enters through the predictable construction of
$J_{-,k}$ and $J_{+,k}$, while cross-time covariances of the score increments
vanish.

\subsection{Predictive Gram geometry and nuisance elimination}
\label{subsec:predictive-gram}

Write
\begin{equation}
  V_k
  =
  \begin{pmatrix}
    J_{-,k}\\
    J_{+,k}
  \end{pmatrix}.
  \label{eq:predictive-Vk}
\end{equation}
The relevant quadratic object is the predictable Gram matrix
\begin{equation}
  G_n^{\mathrm{pred}}
  =
  \frac1n
  \sum_{k=1}^n
  \mathbb E
  \left[
    V_kV_k^\top
    \mid
    \mathcal F^Y_{k-1}
  \right].
  \label{eq:predictable-Gram}
\end{equation}
Under stationarity and ergodicity, the natural limiting assumption is
\begin{equation}
  G_n^{\mathrm{pred}}
  \xrightarrow{P}
  G_{\mathrm{pred}}.
  \label{eq:predictive-Gram-limit}
\end{equation}

If a nuisance score increment
\(
Q_k\in\mathbb R^r
\)
is present, let the joint limiting Gram matrix be partitioned as
\begin{equation}
  G_{\mathrm{joint}}
  =
  \begin{pmatrix}
    G_{VV} & G_{VQ}\\
    G_{QV} & G_{QQ}
  \end{pmatrix}.
  \label{eq:predictive-joint-Gram}
\end{equation}
Whenever $G_{QQ}$ is invertible, the efficient predictive Gram matrix is the
Schur complement
\begin{equation}
  G_{\mathrm{pred}}^{\mathrm{eff}}
  =
  G_{VV}
  -
  G_{VQ}
  G_{QQ}^{-1}
  G_{QV}.
  \label{eq:predictive-efficient-Gram}
\end{equation}
Equivalently, one may work from the outset with efficient predictive
martingale differences
\(
J_{-,k}^\perp,J_{+,k}^\perp
\)
whose limiting Gram matrix is
\eqref{eq:predictive-efficient-Gram}.

The condition
\begin{equation}
  \det
  G_{\mathrm{pred}}^{\mathrm{eff}}
  >0
  \label{eq:predictive-nondegeneracy}
\end{equation}
has the same interpretation as in the independent experiment:
after conditioning on the entire observed past and eliminating nuisance
directions, the odd symmetry-breaking jet and the even quotient jet remain
genuinely independent.

\subsection{Predictive persistence of the curved Gaussian local limit}
\label{subsec:predictive-theorem}

We now state the dependent analogue of
Theorem~\ref{thm:two-jet-LAN}.
When nuisance parameters are present, the statement refers to the efficient
Gaussian quotient of the joint predictive experiment; the precise
least-favourable/Schur-complement reduction is given in
Supplementary Appendix~S2.

\begin{theorem}[Predictive persistence of the curved Gaussian crossover]
\label{thm:predictive-curved-LAN}
Let
\(
\varepsilon_n\downarrow0
\)
and
\(
\delta_n=\varepsilon_n^2
\)
satisfy
\begin{equation}
  \sqrt n\,\delta_n
  \longrightarrow
  \tau\in(0,\infty).
  \label{eq:predictive-critical-scale}
\end{equation}
Assume that for every compact interval
\(
K\subset\mathbb R
\)
the predictive density ratios satisfy
\eqref{eq:predictive-DQM} with a remainder
\(
r_{k,\varepsilon_n,\cdot}\in W^{1,2}(K)
\)
such that
\begin{equation}
  \frac1n
  \sum_{k=1}^n
  \mathbb E
  \left[
    \left\|
      r_{k,\varepsilon_n,\cdot}
    \right\|_{W^{1,2}(K)}^2
  \right]
  \longrightarrow
  0.
  \label{eq:predictive-uniform-remainder}
\end{equation}
and assume the conditional Lindeberg condition for the two predictive jet
coordinates.
A conditional $L^{2+\eta}$ bound for some $\eta>0$ is a convenient sufficient
condition for this Lindeberg requirement.

Suppose further that the efficient predictable Gram matrices converge,
\begin{equation}
  G_{n}^{\mathrm{pred,eff}}
  \xrightarrow{P}
  G_{\mathrm{pred}}^{\mathrm{eff}}
  =
  \begin{pmatrix}
    g_{11}^{\mathrm{pred}} &
    g_{12}^{\mathrm{pred}}\\
    g_{12}^{\mathrm{pred}} &
    g_{22}^{\mathrm{pred}}
  \end{pmatrix},
  \qquad
  G_{\mathrm{pred}}^{\mathrm{eff}}>0.
  \label{eq:predictive-efficient-limit}
\end{equation}
Then the log-likelihood ratio process, after nuisance elimination, satisfies
on every compact $K$
\begin{equation}
  \Lambda_n(\zeta)
  \Longrightarrow
  \tau Z_{\mathrm{pred}}
  (H_\zeta^\perp)
  -
  \frac{\tau^2}{2}
  \|H_\zeta^\perp\|_{\mathrm{pred}}^2,
  \qquad
  \zeta\in K,
  \label{eq:predictive-curved-limit}
\end{equation}
where
\begin{equation}
  H_\zeta^\perp
  =
  \zeta J_-^\perp
  +
  \frac{\zeta^2}{2}J_+^\perp,
  \label{eq:predictive-H-efficient}
\end{equation}
and
\(
Z_{\mathrm{pred}}
\)
is the isonormal Gaussian process on the two-dimensional Hilbert space with
Gram matrix
\(
G_{\mathrm{pred}}^{\mathrm{eff}}
\).
In particular, the limiting mean family is again a non-degenerate parabola.
\end{theorem}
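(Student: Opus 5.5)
We will prove the theorem in two stages: convergence of finite-dimensional distributions for each fixed $\zeta$, followed by tightness in $C(K)$. Both stages rest on the martingale-difference structure \eqref{eq:AB-martingale}--\eqref{eq:predictive-remainder-centered}. We work throughout with the efficient predictive increments $J_{\pm,k}^\perp$, obtained by the Schur-complement reduction \eqref{eq:predictive-efficient-Gram} (Supplementary Appendix~S2). The nuisance-free case therefore suffices, with $V_k$ replaced by $V_k^\perp=(J_{-,k}^\perp,J_{+,k}^\perp)^\top$ and the predictable Gram matrix by $G_n^{\mathrm{pred,eff}}$. Write $\delta_n=\varepsilon_n^2$ and $\Lambda_n(\zeta)=\sum_{k=1}^n\log R_{k,\varepsilon_n,\zeta}$; the initial factor in \eqref{eq:predictive-factorization} is dropped as stated.

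\emph{Step 1 (quadratic expansion).} For fixed $\zeta$ we aim to show
\[
\Lambda_n(\zeta)=\delta_n\sum_{k=1}^n H^\perp_{k,\zeta}-\frac{\delta_n^2}{2}\sum_{k=1}^n (H^\perp_{k,\zeta})^2+o_P(1).
\]
The natural route is the martingale analogue of Lemma~\ref{lem:appB-quadratic-LR}. Set $W_k=\sqrt{R_k}-1$, so that $\log R_k=2\log(1+W_k)$, and expand to second order. The remainder contributes $\delta_n\sum_k r_k$. By \eqref{eq:predictive-remainder-centered} this sum is a martingale, and its second moment is $\delta_n^2\sum_k\mathbb E[r_k^2]=(n\delta_n^2)\cdot\frac1n\sum_k\mathbb E[r_k^2]\to0$ by \eqref{eq:predictive-critical-scale} and \eqref{eq:predictive-uniform-remainder}, where the $W^{1,2}(K)$ norm dominates the pointwise value through the Sobolev embedding. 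The cubic terms are controlled by $\max_k\delta_n|H_k|\to0$ in probability, which follows from the conditional Lindeberg condition, together with $\delta_n^2\sum_k H_k^2=O_P(1)$. To replace the sum of squares by its predictable compensator, we use $\sum_k\bigl(H_k^2-\mathbb E[H_k^2\mid\mathcal F_{k-1}]\bigr)=o_P(n)$, which again follows from Lindeberg by a standard martingale weak law (Hall--Heyde). Combined with \eqref{eq:predictive-efficient-limit}, this gives $\delta_n^2\sum_k H_k^2\to\tau^2\|H_\zeta^\perp\|^2_{\mathrm{pred}}$.

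\emph{Step 2 (finite-dimensional limits).} The process $\Delta_n=n^{-1/2}\sum_k V_k^\perp$ is a martingale array. Its conditional variance converges to $G^{\mathrm{eff}}_{\mathrm{pred}}$ and it satisfies the conditional Lindeberg condition, so the martingale CLT \cite{HallHeyde1980} gives $\Delta_n\Rightarrow N(0,G_{\mathrm{pred}}^{\mathrm{eff}})$. Since $\delta_n\sum_k H_{k,\zeta}^\perp=\sqrt n\delta_n\,(\zeta,\zeta^2/2)\Delta_n$ is linear in the single vector $\Delta_n$, the joint limit at finitely many $\zeta$ follows immediately. It equals $\tau Z_{\mathrm{pred}}(H_\zeta^\perp)-\tfrac{\tau^2}{2}\|H_\zeta^\perp\|^2_{\mathrm{pred}}$.

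\emph{Step 3 (tightness, main obstacle).} The leading part is a polynomial in $\zeta$ with coefficients that are tight random vectors, so it is automatically tight in $C(K)$. The difficulty lies in the remainder process. We first make the $o_P(1)$ terms of Step 1 uniform over $K$; this is exactly why the $W^{1,2}(K)$ norm appears. We plan to bound $\sup_{\zeta\in K}|\delta_n\sum_k r_{k,\zeta}|$ by the $W^{1,2}(K)$ norm of the Hilbert-space-valued martingale $\delta_n\sum_k r_{k,\cdot}$, using the one-dimensional Sobolev embedding $W^{1,2}(K)\hookrightarrow C(K)$. Orthogonality of increments in $W^{1,2}(K)$ holds by \eqref{eq:predictive-remainder-centered}, applied pointwise and to $\partial_\zeta r$ by differentiating under the conditional expectation. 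This yields $\mathbb E\|\cdot\|^2_{W^{1,2}}\le n\delta_n^2\cdot\frac1n\sum_k\mathbb E\|r_k\|^2_{W^{1,2}(K)}\to0$. The cubic and cross terms are handled uniformly because $\sup_{\zeta\in K}|H_{k,\zeta}|\le C_K\|V_k\|$. The delicate point, which we expect to need the most care, is the cross term $\delta_n^2\sum_k H_k r_k$ and the use of $\log(1+x)$ on events where $R_k$ is near zero. We would handle this by Cauchy--Schwarz uniformly in $\zeta$ and by truncation at $\max_k\sup_\zeta\delta_n|H_k+r_k|<1/2$. The truncation event has probability tending to one, by Lindeberg for $H$ and by the $L^2$ bound for $r$, via the embedding.

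Finally, the mean curve is $\tau(\zeta,\zeta^2/2)$ in the basis $(J_-^\perp,J_+^\perp)$. Since $G_{\mathrm{pred}}^{\mathrm{eff}}>0$, these two vectors are independent, so the curve is a non-degenerate parabola, exactly as in Proposition~\ref{prop:cross-cap}.
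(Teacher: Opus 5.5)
Your proposal is correct and is essentially the paper's argument. The paper proves the theorem by applying Proposition~\ref{prop:appB-pred-quadratic} to the efficient jet vector, and that proof uses the same ingredients you list: a truncation event on which $\log(1+\delta_n X_{n,k})$ is Taylor-expanded uniformly on $K$; the $W^{1,2}(K)$-valued martingale bound with the Sobolev embedding for the centred remainder (Lemma~\ref{lem:appB-pred-remainder}); Cauchy--Schwarz for the cross terms; and the martingale CLT plus compensator replacement for the polynomial leading part. The only difference is cosmetic: your detour through $\sqrt{R_k}-1$ is unnecessary, because the predictive expansion is already a density-ratio expansion with an exactly centred remainder, so no Hellinger drift correction arises.
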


\begin{proof}
Apply
Proposition~\ref{prop:appB-pred-quadratic}
to the efficient predictive jet vector.
The centering
\eqref{eq:AB-martingale}--\eqref{eq:predictive-remainder-centered},
the compact-uniform condition
\eqref{eq:predictive-uniform-remainder},
the conditional Lindeberg condition and
\eqref{eq:predictive-efficient-limit}
are exactly the hypotheses of that proposition.
It gives the quadratic log-likelihood expansion in $C(K)$ and the martingale
central-limit theorem yields
\eqref{eq:predictive-curved-limit}.
\end{proof}

The theorem shows precisely what memory changes and what it does not.
The independent Gram matrix $G$ of
Section~\ref{sec:local-asymptotics} is replaced by the efficient predictive
Gram matrix
\(
G_{\mathrm{pred}}^{\mathrm{eff}}
\).
The cross-cap image
\(
\zeta\mapsto(\zeta,\zeta^2/2)
\)
and the factor $\sqrt n$ are unchanged.

The predictive analogue of
\eqref{eq:critical-Efron-curvature} is
\begin{equation}
  \gamma_{\mathrm{pred}}(\tau)
  =
  \frac1{\tau}
  \frac{
    \sqrt{
      \det G_{\mathrm{pred}}^{\mathrm{eff}}
    }
  }{
    \left(
      g_{11}^{\mathrm{pred}}
    \right)^{3/2}
  }.
  \label{eq:predictive-curvature}
\end{equation}
Thus memory deforms the metric of the critical plane and hence its curvature,
but positivity of the predictive Gram determinant preserves genuine
two-dimensional curvature.

\subsection{CIR--OU certification}
\label{subsec:predictive-CIROU}

For the texture--coherence benchmark, the stationary predictive tangent
recursions can be solved explicitly in the strict Gaussian-texture limit.
Appendix~\ref{app:predictive-CIROU} gives the two jets
$J_{-,k}$ and $J_{+,k}$ together with the nuisance score $q_k$.
Their current-innovation chaos decomposition yields the following exact
non-degeneracy test.

\begin{proposition}[Predictive non-degeneracy in the CIR--OU benchmark]
\label{prop:predictive-CIROU-nondegenerate}
For every fixed $d,c>0$, the three directions
$q_k,J_{-,k},J_{+,k}$ are linearly independent in $L^2$.  Consequently
\begin{equation}
  \det G_{\mathrm{pred}}^{\mathrm{eff}}>0.
  \label{eq:CIR-predictive-Gram-positive}
\end{equation}
\end{proposition}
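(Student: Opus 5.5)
The plan is to reduce the positivity of the efficient predictive Gram determinant to linear independence, and then prove independence using the current-innovation chaos decomposition mentioned just before the statement. First, by stationarity and ergodicity of the observed process in the strict Gaussian-texture limit, the predictable Gram limit $G_{\mathrm{joint}}$ of the vector $(J_{-,k},J_{+,k},q_k)$ equals the stationary covariance $\mathbb E[(J_{-,k},J_{+,k},q_k)^{\top}(J_{-,k},J_{+,k},q_k)]$. The Schur complement \eqref{eq:predictive-efficient-Gram} then satisfies
\[
\det G_{\mathrm{joint}}=\det G_{QQ}\,\det G^{\mathrm{eff}}_{\mathrm{pred}}.
\]
Here $G_{QQ}=\mathbb E[q_k^2]>0$, since the nuisance score is a nonzero martingale difference: it is the OU predictive score in $\beta$, which is nondegenerate for $c>0$. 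So \eqref{eq:CIR-predictive-Gram-positive} is equivalent to $\det G_{\mathrm{joint}}>0$. That in turn is equivalent to linear independence of $q_k,J_{-,k},J_{+,k}$ in $L^2$, because a Gram matrix is singular exactly when some nontrivial combination vanishes almost surely. This proves the ``consequently'' part once independence is established.

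For independence, I would use the explicit expressions from Appendix~\ref{app:predictive-CIROU}. In the $\varepsilon=0$ limit, the baseline predictive law is the Gaussian OU transition. Write the current innovation as $\xi_k=(Y_k-\rho Y_{k-1})/\sqrt{(1-\rho^2)/2}\in\mathbb R^2$, which is standard Gaussian and independent of $\mathcal F^Y_{k-1}$. Each of the three increments expands as a finite sum $\sum_{m}\langle A_{m,k},H_m(\xi_k)\rangle$ of Hermite chaoses in $\xi_k$, with $\mathcal F^Y_{k-1}$-measurable coefficients $A_{m,k}$; the zeroth chaos is absent because of \eqref{eq:AB-martingale}. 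The expected pattern is the following. The nuisance score $q_k$ is purely a function of $\xi_k$ and $Y_{k-1}$ in chaoses $1$ and $2$, with coefficients that are quadratic/linear polynomials in $Y_{k-1}$ only. The odd jet $J_{-,k}$ is built from $\mathfrak r$, so it carries an odd (reflection-type) dependence on the predictable texture filter. The even jet $J_{+,k}$, which is the predictive version of $k=h-q+q^2$, carries a chaos-$4$ (or chaos-$3$) component that $q_k$ cannot produce.

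I would suppose $aq_k+bJ_{-,k}+cJ_{+,k}=0$ a.s. and project onto each chaos conditionally on $\mathcal F^Y_{k-1}$. Orthogonality of distinct chaoses gives separate identities, one per order, between predictable coefficients. The highest chaos, where only $J_{+,k}$ contributes nonzero, forces $c=0$ as soon as its coefficient is not a.s. zero. Next, a coefficient of $J_{-,k}$ that depends on the past through a variable not appearing in $q_k$ (for example, through $Y_{k-2}$ or the accumulated filter, or through the parity structure) forces $b=0$. Finally $a=0$ follows from $q_k\neq0$.

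The main obstacle is the middle step: verifying that the coefficient of $J_{-,k}$ is not a predictable multiple of the coefficient of $q_k$ in the shared chaos. If a clean parity or independent-variable argument fails, I would fall back on the i.i.d. case: condition on a set of positive probability where the past reduces to the pair experiment, and invoke Proposition~\ref{prop:benchmark-gram-positive} together with Proposition~\ref{prop:quotient-fisher-positive}. Those give explicit positive determinants, which would exclude such a proportionality on that set.
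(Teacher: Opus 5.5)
\textbf{There is a genuine gap in the elimination step.} The framework is right. The Schur-complement identity $\det G_{\mathrm{joint}}=G_{QQ}\det G^{\mathrm{eff}}_{\mathrm{pred}}$ with $G_{QQ}=\mathbb E[q_k^2]>0$, together with ``Gram singular iff a nontrivial combination vanishes'', is exactly the reduction behind the paper's ``consequently''. Conditional Hermite chaoses of the current innovation are also the paper's tool.

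What fails is your claim that only $J_{+,k}$ reaches the top chaos. Write $\Pi_m$ for projection onto the $m$-th conditional chaos of $\xi_k$.
\begin{itemize}
\item By \eqref{eq:appC-pred-Jminus}, $J_{-,k}$ contains $u_dq_kU_k=2cu_d\bigl(r_cU_k^2-T_kU_k\bigr)$. Hence $\Pi_4J_{-,k}=2cr_cu_d\,\Pi_4(U_k^2)$ and $\Pi_3J_{-,k}=-2cu_d\,\Pi_3(T_kU_k)$. This is not special to the predictive setting: already $\mathfrak r=qH-q-t$ contains a product of two second-chaos variables.
\item By \eqref{eq:appC-pred-Jplus}, $J_{+,k}$ contains $V_dq_k^2=4c^2V_d\bigl(r_c^2U_k^2-2r_cU_kT_k+T_k^2\bigr)$. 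Hence $\Pi_4J_{+,k}=4c^2r_c^2V_d\,\Pi_4(U_k^2)$ and $\Pi_3J_{+,k}=-8c^2r_cV_d\,\Pi_3(T_kU_k)$.
\item Every other term lies in chaoses of order at most two. This includes all terms carrying $M_{k-1},N_{k-1}$, as well as $h_k$, $\dot U_k$ and $q_k$ itself.
\end{itemize}
So the fourth-chaos equation is a single linear relation between the two jet coefficients, since both multiply the same element $\Pi_4(U_k^2)$. It does not force the $J_+$ coefficient to vanish. Separately, avoid calling a coefficient $c$; that letter is already the time-scale parameter.

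\textbf{The fix: use the third and fourth chaoses jointly.} For a putative relation $aJ_{-,k}+bJ_{+,k}=s\,q_k$, the two projections give
\[
\begin{pmatrix}
2cr_cu_d & 4c^2r_c^2V_d\\
-2cu_d & -8c^2r_cV_d
\end{pmatrix}
\begin{pmatrix}a\\ b\end{pmatrix}=0 .
\]
Here you use $\Pi_4(U_k^2)\neq0$, and $\Pi_3(T_kU_k)\neq0$ whenever $Y_{k-1}\neq0$. The determinant is $-8c^3r_c^2u_dV_d\neq0$ for $d,c>0$, which is the determinant stated in the paper. So $a=b=0$, and then $s=0$ because $q_k\neq0$.

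With this, your ``middle step'' of comparing past-dependent coefficients in a shared low chaos is never needed. The tools you proposed for it would not have worked anyway:
\begin{itemize}
\item All three increments are invariant under the global flip $Y\mapsto-Y$, so there is no observed parity selection rule. The $\lambda$-parity of the jets is not an $L^2(P_0)$ orthogonality: already in the pair experiment $\langle\mathfrak r^\perp,k^\perp\rangle=C(c)\neq0$.
\item The pair-experiment fallback does not transfer. The predictive jets involve the filter states $M_{k-1},N_{k-1}$ and are not the pair scores $\mathfrak r^\perp,k^\perp$. Proposition~\ref{prop:benchmark-gram-positive} therefore says nothing about them.
\end{itemize}
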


\begin{proof}
Projecting a putative relation
$aJ_{-,k}+bJ_{+,k}=s q_k$ onto the third and fourth conditional Hermite
chaoses gives a $2\times2$ system for $(a,b)$ with determinant
\begin{equation}
  -8c^3r_c^2u_dV_d\neq0
  \qquad(d,c>0),
  \label{eq:predictive-chaos-determinant}
\end{equation}
so $a=b=s=0$.  See Appendix~\ref{app:predictive-CIROU} for the explicit
recursions and projections.
\end{proof}

The remaining analytic requirement is also quantitative.  For each compact
$K\subset\mathbb R$ there is $C_{K,d,c}<\infty$ such that
\begin{equation}
  \mathbb E\!\left[
    \|r_{k,\varepsilon,\cdot}\|_{W^{1,2}(K)}^2
  \right]
  \leq C_{K,d,c}\varepsilon^2,
  \label{eq:CIR-predictive-remainder-bound}
\end{equation}
and the predictive jets have a uniform $L^{2+\eta}$ bound.  The argument is
summarized in Appendix~\ref{appC:regularity-summary} and proved in
Supplementary Appendix~S3.  Thus all assumptions of
Theorem~\ref{thm:predictive-curved-LAN} hold at fixed $d,c>0$.

For $\alpha_n/\sqrt n\to\kappa\in(0,\infty)$ one has
$\sqrt n\,\varepsilon_n^2\to\kappa^{-1}$, so the limiting predictive
experiment is the same non-degenerate Gaussian parabola as in the independent
case with the predictive efficient Gram metric.  Hence the three regimes
remain
\begin{equation}
  \alpha_n\ll\sqrt n,
  \qquad
  \alpha_n\asymp\sqrt n,
  \qquad
  \alpha_n\gg\sqrt n,
  \label{eq:predictive-three-regimes}
\end{equation}
corresponding, respectively, to the regular signed regime, the predictive
parabolic regime, and the reflection quotient.

\subsection{What memory changes---and the limits of the result}
\label{subsec:memory-interpretation}

The predictive theorem gives a precise sense in which the crossover is robust
to memory.
Conditioning on the observed past changes the two efficient tangent vectors,
their norms and their angle.
Equivalently, it replaces the one-transition Fisher plane by the pathwise
metric
\(
G_{\mathrm{pred}}^{\mathrm{eff}}
\).
It does not, however, change the representation-theoretic origin of the two
jets:
one remains the odd transverse response to weak symmetry breaking and the
other the even tangent inherited by the quotient.
Nor does memory alter the central $\sqrt n$ accumulation of information,
because predictive score increments are martingale differences.

The domain of this statement must be kept explicit.
For the CIR--OU certification above,
\begin{equation}
  d=A\Delta t>0,
  \qquad
  c=\frac{B_0\Delta t}{2}>0
  \label{eq:predictive-fixed-dc}
\end{equation}
are held fixed as
\(
n\to\infty
\)
and
\(
\alpha_n\to\infty
\).
No uniformity is claimed as
\(
d\to0
\)
or
\(
c\to0
\).
In particular, the tangent filter contains a resolvent whose correlation
length in sample units grows like
\(
d^{-1}
\);
the weak-texture expansion ceases to be uniform in the joint layer
\begin{equation}
  \alpha d=O(1).
  \label{eq:alpha-d-layer}
\end{equation}
That layer requires a separate resummation.
Likewise, the noiseless high-frequency limit is a different singular
experiment because quadratic variation can reveal diffusion parameters at a
faster information rate.
Neither regime is covered by
Theorem~\ref{thm:predictive-curved-LAN}.

Within its fixed-$(d,c)$ domain, however, the conclusion is sharp:
memory deforms the metric of the weak unfolding but does not erase its
two-jet geometry.
This completes the asymptotic side of the paper.
We now turn to a different question: which geometric data belong intrinsically
to the limiting quotient, and which belong only to its transverse unfolding?

\section{Quotient geometry versus unfolding geometry}
\label{sec:quotient-vs-unfolding}

The preceding sections have used two distinct local objects.
The limiting symmetric experiment is described intrinsically on the
identifiable quotient, whereas the weakly broken family carries an additional
transverse direction that disappears when the symmetric face is reached.
We now formalize this distinction at the level of jets.

The main conclusion is a non-determination statement in both directions:
the restricted second jet of the symmetric quotient does not retain the
transverse mixed jet, and the leading cross-cap unfolding
does not determine the higher even jets required by the intrinsic
Fisher--Amari geometry of the quotient.

\subsection{Intrinsic and transverse jet data}
\label{subsec:intrinsic-transverse-jets}

Work at a symmetric base point
\(
x_0=(\beta_0,\varepsilon,\lambda)=(\beta_0,0,0)
\).
Let
\(
E
\)
denote the one-dimensional tangent space of the external symmetry-breaking
parameter $\varepsilon$, and let
\(
N
\)
denote the one-dimensional normal space of the signed parameter $\lambda$.
The reflection acts trivially on $E$ and by the sign representation on $N$.

Let
\begin{equation}
  \mathcal H_{\mathrm{eff}}
  =
  L^2_0(P_0)/\mathcal N
  \label{eq:Heff-section7}
\end{equation}
be the efficient score space modulo nuisance tangents.
The weak-unfolding normal form of
Section~\ref{sec:weak-unfolding} provides two efficient jet classes.
The mixed transverse jet is the bilinear map
\begin{equation}
  \mathcal J_{\mathrm{mix}}:
  E\otimes N
  \longrightarrow
  \mathcal H_{\mathrm{eff}},
  \qquad
  \mathcal J_{\mathrm{mix}}
  (\partial_\varepsilon,\partial_\lambda)
  =
  [J_-],
  \label{eq:jet-mix-tensor}
\end{equation}
whereas the first invariant normal jet is
\begin{equation}
  \mathcal J_{\mathrm{quot}}:
  \operatorname{Sym}^2N
  \longrightarrow
  \mathcal H_{\mathrm{eff}},
  \qquad
  \mathcal J_{\mathrm{quot}}
  (\partial_\lambda,\partial_\lambda)
  =
  [J_+].
  \label{eq:jet-quot-tensor}
\end{equation}
The reflection parity explains the different tensor types:
\(
E\otimes N
\)
is odd in $\lambda$, while
\(
\operatorname{Sym}^2N
\)
is even.

The pair
\begin{equation}
  (\mathcal J_{\mathrm{mix}},\mathcal J_{\mathrm{quot}})
  \label{eq:jet-structural-pair}
\end{equation}
is precisely the second-order data detected by the effective cross-cap:
\(
\mathcal J_{\mathrm{mix}}
\)
describes restoration of the signed direction away from the symmetric face,
and
\(
\mathcal J_{\mathrm{quot}}
\)
describes the first direction that survives on the identifiable quotient.

\subsection{Restriction to the symmetric face loses the unfolding jet}
\label{subsec:restriction-loses-mixed-jet}

Let
\(
\mathcal J^2
\)
denote the second jet of the efficient statistical map at $x_0$.
Restriction to the symmetric face
\(
\varepsilon=0
\)
defines a linear map on second jets,
\begin{equation}
  \operatorname{Res}_0:
  J^2_{x_0}
  \longrightarrow
  J^2_{x_0}\big|_{\varepsilon=0}.
  \label{eq:jet-restriction}
\end{equation}
Any mixed coefficient containing one factor of $d\varepsilon$ lies in the
kernel of this restriction.
In particular,
\begin{equation}
  E^*\otimes N^*
  \otimes
  \mathcal H_{\mathrm{eff}}
  \subset
  \ker(\operatorname{Res}_0).
  \label{eq:mixed-jet-kernel}
\end{equation}

\begin{proposition}[Restriction erases the transverse mixed jet]
\label{prop:quotient-not-determine-unfolding}
At second order, the restriction morphism to the symmetric face annihilates
the mixed symmetry-breaking component
\(
\mathcal J_{\mathrm{mix}}
\).
Consequently, the restricted quotient $2$-jet contains no information from
which
\(
\mathcal J_{\mathrm{mix}}
\in
E^*\otimes N^*\otimes\mathcal H_{\mathrm{eff}}
\)
can be reconstructed.
\end{proposition}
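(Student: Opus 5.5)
The plan is to make the restriction morphism explicit in coordinates on second jets and then show non-reconstructibility with a two-model construction.

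First, write the efficient second jet at $x_0$ in the coordinates $(\varepsilon,\lambda)$, suppressing $\beta$, which is untouched by the restriction. By Theorem~\ref{thm:weak-breaking-normal-form} and \eqref{eq:lambda-Hellinger-zero}, it has the form
\[
  \mathcal J^2(\varepsilon,\lambda)
  =
  a_\varepsilon\,\varepsilon
  +
  \tfrac12 b_{\varepsilon\varepsilon}\,\varepsilon^2
  +
  \varepsilon\lambda\,[J_-]
  +
  \tfrac{\lambda^2}{2}\,[J_+],
\]
with coefficients in $\mathcal H_{\mathrm{eff}}$. These components sit respectively in $E^*$, $\operatorname{Sym}^2E^*$, $E^*\otimes N^*$ and $\operatorname{Sym}^2N^*$, each tensored with $\mathcal H_{\mathrm{eff}}$. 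Restriction to $\varepsilon=0$ means pulling back along the inclusion $\iota_0:\{\varepsilon=0\}\hookrightarrow M\times E$. At the jet level this is $\iota_0^*$ applied to the polynomial, which kills every monomial containing $\varepsilon$. Thus $\operatorname{Res}_0(\mathcal J^2)=\tfrac{\lambda^2}{2}[J_+]$. This proves \eqref{eq:mixed-jet-kernel}, and in particular that $\operatorname{Res}_0(\mathcal J_{\mathrm{mix}})=0$. The only point needing care is that the splitting of the second jet into these tensor summands is canonical. This holds because $d\varepsilon$ and $d\lambda$ are distinguished by the reflection parity, trivial on $E$ and sign on $N$. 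The mixed summand is therefore an isotypic component and does not depend on the coordinate choice, up to nuisance changes, which are already quotiented out in $\mathcal H_{\mathrm{eff}}$.

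Second, for the non-reconstruction claim, it suffices to show that $\operatorname{Res}_0$ restricted to the admissible jets (those satisfying the reflection constraint) is not injective in the mixed component. Concretely, we exhibit two admissible second jets with the same restriction but different $\mathcal J_{\mathrm{mix}}$. Take the jets $\varepsilon\lambda\,[J_-]+\tfrac{\lambda^2}{2}[J_+]$ and $\varepsilon\lambda\,[J_-']+\tfrac{\lambda^2}{2}[J_+]$ with $[J_-']\neq[J_-]$, for instance $J_-'=0$ or $J_-'=2J_-$. Both restrict to $\tfrac{\lambda^2}{2}[J_+]$. Hence no map defined on restricted jets can return $\mathcal J_{\mathrm{mix}}$, since its fibre over $\tfrac{\lambda^2}{2}[J_+]$ contains the whole affine space $E^*\otimes N^*\otimes\mathcal H_{\mathrm{eff}}$.

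To make this statistical rather than purely formal, I would realize both jets by actual families via the hidden-involution mechanism of Section~\ref{subsec:anti-invariant-perturbation}. Keep $\nu_0$ and the kernel fixed, which fixes $J_+=\mathcal K_21/p_0$ and the whole symmetric face by Proposition~\ref{prop:hidden-involution}. Then vary the anti-invariant perturbation $a$ in \eqref{eq:latent-law-expansion}, replacing it by $sa$. This rescales $J_-=\mathcal K_1a/p_0$ to $sJ_-$ while leaving $P_{0,\lambda}$ literally unchanged.

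The main obstacle I anticipate is this realization step. One must check that $\nu_\varepsilon^{(s)}$, with density $1+s\varepsilon a+\cdots$, is a valid probability law; for that, $a$ should be bounded with $\int a\,d\nu_0=0$, the latter automatic from anti-invariance. One must also check that the twice-differentiability hypotheses of Theorem~\ref{thm:weak-breaking-normal-form} persist. If that realization proves delicate, I would fall back on the purely jet-level statement, which is all the proposition asserts: non-injectivity of $\operatorname{Res}_0$ on the mixed summand.
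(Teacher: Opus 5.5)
Your proposal is correct and matches the paper's proof. The paper likewise observes that the whole mixed sector $E^*\otimes N^*\otimes\mathcal H_{\mathrm{eff}}$ lies in $\ker(\operatorname{Res}_0)$, so the symmetric-face $2$-jet cannot see $\mathcal J_{\mathrm{mix}}$; your explicit pair of jets with equal restriction simply spells out that non-injectivity, and the hidden-involution realization is optional, since the paper states that no existence statement for globally distinct statistical unfoldings is needed.
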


\begin{proof}
The restricted jet depends only on the image of the full second jet under
\(
\operatorname{Res}_0
\).
Equation \eqref{eq:mixed-jet-kernel} shows that the entire mixed tensor sector
\(
E^*\otimes N^*\otimes\mathcal H_{\mathrm{eff}}
\)
lies in the kernel.
Hence the symmetric-face $2$-jet is insensitive to
\(
\mathcal J_{\mathrm{mix}}
\).
This proves non-determination at the level of jets.
No separate existence statement for globally distinct statistical unfoldings
is required.
\end{proof}

This proposition is coordinate independent.
For example, a smooth change
\(
\widetilde\beta
=
\beta+c\lambda^2+\cdots
\)
may modify a representative of $\mathcal J_{\mathrm{quot}}$ by a nuisance tangent, but it
cannot convert the missing
\(
E\otimes N
\)
component into data intrinsic to the restricted face.
The distinction is therefore one of tensor type, not merely one of
coordinates.

In the CIR--OU benchmark this abstract separation has a concrete meaning.
At $\varepsilon=0$, the quotient experiment retains the even direction
\(
k^\perp
\),
while the odd direction
\(
\mathfrak r^\perp
\)
that restores the sign of $\lambda$ at finite texture disappears.
The strict Gaussian quotient therefore cannot, by itself, recover the
finite-$\alpha$ coefficient multiplying
\(
\varepsilon\lambda
\).

\subsection{The cross-cap two-jet does not determine intrinsic curvature}
\label{subsec:crosscap-not-curvature}

The reciprocal limitation is equally important.
The cross-cap normal form identifies the first quotient tangent
\(
\mathcal J_{\mathrm{quot}}
\),
but intrinsic curvature of the quotient depends on higher even derivatives.

Let
\begin{equation}
  \bar\Phi(\beta,\mu)
  =
  2\sqrt{p_{\beta,\mu}}
  \label{eq:quotient-Hellinger-map-section7}
\end{equation}
be the Hellinger immersion of the identifiable quotient.
At the boundary $\mu=0$, the data carried by
\(
\mathcal J_{\mathrm{quot}}
\)
determine
\begin{equation}
  \partial_\mu\bar\Phi\big|_{\mu=0}.
  \label{eq:first-mu-derivative}
\end{equation}
By contrast, the second fundamental form and the Gaussian curvature depend on
second derivatives of the immersion, including
\begin{equation}
  \partial_{\mu\mu}^2\bar\Phi\big|_{\mu=0}.
  \label{eq:second-mu-derivative}
\end{equation}
Since
\(
\mu=\lambda^2/2
\),
the derivative
\(
\partial_{\mu\mu}^2
\)
corresponds to order $\lambda^4$ in the original signed coordinate.

\begin{proposition}[The leading unfolding does not determine quotient curvature]
\label{prop:crosscap-not-determine-curvature}
The effective cross-cap data
\(
(\mathcal J_{\mathrm{mix}},\mathcal J_{\mathrm{quot}})
\)
do not, in general, determine the intrinsic Fisher curvature of the limiting
quotient.
Additional even jet data are required.
In particular, knowledge of the first quotient tangent
\(
\partial_\mu\bar\Phi|_{\mu=0}
\)
does not determine
\(
\partial_{\mu\mu}^2\bar\Phi|_{\mu=0}
\),
and therefore does not determine the second fundamental form or the Gaussian
curvature at the identifiable boundary.
\end{proposition}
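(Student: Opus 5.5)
The plan is a non-determination argument by explicit counterexample. I would construct two smooth reflection-symmetric families whose effective cross-cap data \((\mathcal J_{\mathrm{mix}},\mathcal J_{\mathrm{quot}})\) agree but whose quotient Fisher curvatures at \(\mu=0\) differ. Since the statement is about data at the symmetric base point, it suffices to work in the simplest setting where the Hellinger immersion is explicit: a Gaussian location (or exponential) family with a two-dimensional quotient \((\beta,\mu)\).

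\textbf{Step 1: model 1.} Take observations \(Y\sim N(m(\beta,\mu,\varepsilon,\lambda),I_d)\) with \(d\ge3\). Choose the mean \(m=\beta e_1+\mu e_2+\varepsilon\lambda e_3\), with \(\mu=\lambda^2/2\). Then:
\begin{itemize}
\item Reflection symmetry at \(\varepsilon=0\) holds trivially.
\item The Hellinger jets are \(J_-=y_3-m_3\) and \(J_+=y_2-m_2\), and the nuisance score is \(y_1-m_1\).
\item The effective Gram determinant \(\Delta_{\mathrm{eff}}\) is positive.
\end{itemize}

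\textbf{Step 2: model 2.} Perturb only at higher even order: \(m'=m+\mu^2 w\), with \(w\) a fixed vector orthogonal to \(e_2\). The term \(\mu^2\) is of order \(\lambda^4\) and contributes nothing to the second jet in \((\varepsilon,\lambda)\). By Theorem~\ref{thm:weak-breaking-normal-form} and \eqref{eq:effective-Hellinger-2jet}, both models therefore share the same \(J_\pm^\perp\), hence the same \(\mathcal J_{\mathrm{mix}}\), \(\mathcal J_{\mathrm{quot}}\) and \(\partial_\mu\bar\Phi|_{\mu=0}\). On the other hand, \(\partial^2_{\mu\mu}\bar\Phi|_{\mu=0}\) changes by a term proportional to \(\sqrt{p_0}\,w^\top(y-m)\), which is nonzero. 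This already gives the second assertion of the proposition.

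\textbf{Step 3: curvature.} The Fisher metric of a Gaussian location family with identity covariance is the pullback of the Euclidean metric by \((\beta,\mu)\mapsto m\). Hence the intrinsic Fisher curvature is the Gaussian curvature of the surface \((\beta,\mu)\mapsto\beta e_1+\mu e_2+\mu^2 w\). This surface is a cylinder over a curve, so it is flat. I would therefore replace the mean by something with a genuinely two-dimensional second fundamental form, for example \(m'=m+\beta\mu\, w\), or \(\beta^2 w_1+\mu^2 w_2\) with \(w_1\parallel w_2\). For the graph \((\beta,\mu)\mapsto(\beta,\mu,f(\beta,\mu))\), Gauss's formula at the base point gives
\[
K=\frac{f_{\beta\beta}f_{\mu\mu}-f_{\beta\mu}^2}{(1+|\nabla f|^2)^2}.
\]
Taking \(f=a\beta^2+b\mu^2\) yields \(K=4ab\) at the origin. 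Varying \(b\) while keeping \(a\) fixed changes \(K\) without altering any jet of order \(\le2\) in \(\lambda\), provided \(a\) is chosen so that the nuisance-direction derivatives are unchanged. Here \(a\) is fixed across both models and is independent of \(\lambda\), so the \(\beta\)-nuisance tangent space \(\mathcal N\) and the projections are identical.

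\textbf{Main obstacle.} The main difficulty is ensuring that the Fisher (intrinsic) curvature, not merely an extrinsic embedding quantity, differs. This is handled by noting that for unit-covariance Gaussian location models the Fisher metric is exactly the induced Euclidean metric on the mean surface, so Gauss's Theorema Egregium applies directly. One must also check that the comparison is made with identical nuisance structure; the \(\beta\)-dependence is common to both models, so this holds. Finally, to address ``in general'', I would remark that the construction embeds in any model, since adding such \(\mu^2\) terms is a free choice of the order-\(\lambda^4\) jet.
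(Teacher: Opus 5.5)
Your argument is correct, and it takes a different route from the paper's. The paper argues purely by jet order. The cross-cap $2$-jet contains $\partial_{\varepsilon\lambda}$ and $\partial_{\lambda\lambda}$ (the latter becomes $\partial_\mu$ on $Q$) but no coefficient corresponding to $\partial^2_{\mu\mu}\bar\Phi$. So two quotient immersions may agree to first order in $\mu$ and differ at second order, and Gauss's equation then ``allows'' the curvatures to differ; no pair of models is actually exhibited.

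You instead give an explicit witness. Take $N\bigl(\beta e_1+\mu e_2+\varepsilon\lambda e_3+(a\beta^2+b\mu^2)w,\,I\bigr)$ with $w$ a unit vector orthogonal to $e_1,e_2$ and $a\neq0$. For all $b$, the base law, the nuisance tangent space and the classes $[J_\pm]$ coincide literally, even along the whole fixed locus, since $b$ enters only $\partial^2_{\mu\mu}$. Meanwhile the Fisher curvature at $(\beta,\mu)=(0,0)$ is $4ab$.

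This buys an actual proof that no formula in $(\mathcal J_{\mathrm{mix}},\mathcal J_{\mathrm{quot}})$ can return $K_F$. Your Step~3 also isolates the point that the paper's ``may then differ'' passes over. A change of $\partial^2_{\mu\mu}\bar\Phi$ moves $K_F$ only through its pairing with $\mathrm{II}_{\beta\beta}$. That is why the cylinder perturbation $\mu^2w$ leaves $K_F=0$, and it is the same mechanism behind the benchmark cancellation \eqref{eq:appC-d-cancellation-identity}. In exchange, the paper's route is model-free and explains the phenomenon structurally.

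Two small remarks. First, the $\beta\mu w$ alternative would alter $\partial_\mu\bar\Phi$ along the fixed locus away from $\beta_0=0$, so your final choice $a\beta^2+b\mu^2$ is the more robust one. Second, your closing claim that the construction ``embeds in any model'' is unsupported: by your own pairing criterion, not every order-$\lambda^4$ modification changes $K_F$. It is also unnecessary, since one pair of models already establishes ``not, in general''.
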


\begin{proof}
The cross-cap $2$-jet contains the mixed derivative
\(
\partial_{\varepsilon\lambda}
\)
and the first nonzero normal invariant derivative
\(
\partial_{\lambda\lambda}
\),
which becomes the first derivative
\(
\partial_\mu
\)
on the quotient.
It contains no coefficient corresponding to
\(
\partial_{\mu\mu}^2
\).
Two quotient immersions can therefore agree to first order in $\mu$ while
having different second derivatives in $\mu$.
Their Hellinger second fundamental forms may then differ, and Gauss's equation
allows the corresponding intrinsic Gaussian curvatures to differ as well.
\end{proof}

The statement concerns insufficiency of jet order.
We do not claim that every arbitrary change of
\(
\partial_{\mu\mu}^2\bar\Phi
\)
is realized by the same global statistical model.
The point is that no formula for intrinsic curvature can be extracted from
the cross-cap pair
\(
(\mathcal J_{\mathrm{mix}},\mathcal J_{\mathrm{quot}})
\)
alone.

\subsection{The benchmark requires the next even jet}
\label{subsec:benchmark-higher-even-jet}

The CIR--OU quotient makes this distinction explicit.
For the strict Gaussian-texture experiment, the first quotient derivative is
\begin{equation}
  \left.
  \partial_\mu p
  \right|_{\mu=0}
  =
  V(d)
  \left(
    D^2-D
  \right)p,
  \qquad
  D=\partial_\beta.
  \label{eq:first-quotient-density-jet}
\end{equation}
This is the density jet underlying the score
\(
J_+^\perp
\)
and the non-degenerate quotient Fisher information of
Proposition~\ref{prop:quotient-fisher-positive}.

To determine the curvature, one needs the next even jet.
Writing
\begin{equation}
  V=V(d),
  \qquad
  V_2=V(2d),
  \label{eq:V-V2}
\end{equation}
and
\begin{equation}
  K_{112}(d)
  =
  \frac{2}{d^3}
  \left[
    4d+2de^{-d}-7+8e^{-d}-e^{-2d}
  \right],
  \label{eq:K112}
\end{equation}
define
\begin{align}
  a_1
  &=
  -\frac{V_2}{4}
  +
  \frac{K_{112}}{2}
  -
  \frac{3V^2}{4},
  \\
  a_2
  &=
  \frac{V_2}{2}
  -
  \frac{3K_{112}}{2}
  +
  \frac{11V^2}{4},
  \\
  a_3
  &=
  \frac{3K_{112}}{2}
  -
  \frac{9V^2}{2}.
  \label{eq:a123-section7}
\end{align}
A direct fourth-order expansion in the signed coordinate gives
\begin{equation}
  \left.
  \partial_{\mu\mu}^2p
  \right|_{\mu=0}
  =
  8
  \left[
    a_1D
    +
    \frac{a_2}{2}D^2
    +
    \frac{a_3}{6}D^3
    +
    \frac{V^2}{8}D^4
  \right]p.
  \label{eq:second-quotient-density-jet}
\end{equation}
Thus the first quotient tangent is an order-$\lambda^2$ object, whereas the
intrinsic curvature calculation necessarily sees order $\lambda^4$ data.
In the short-texture-time limit
\(
d\to0
\),
one recovers the consistency relation
\begin{equation}
  \left.
  \partial_{\mu\mu}^2p
  \right|_{\mu=0}
  \longrightarrow
  (D^2-D)^2p.
  \label{eq:second-jet-short-time}
\end{equation}

The same hierarchy continues beyond curvature.
For the full Amari holonomy classification, the transverse
Ambrose--Singer certificate requires a third derivative in $\mu$, hence an
order-$\lambda^6$ expansion in the signed coordinate.
The corresponding computation will be used only in
Section~\ref{sec:quotient-information-geometry}; it is not part of the
cross-cap data.

\subsection{A hierarchy of geometric information}
\label{subsec:jet-hierarchy}

The preceding results can be summarized by the following hierarchy:
\begin{equation}
  \boxed{
  \begin{array}{rcl}
    \text{reflection isotropy}
    &\Longrightarrow&
    \text{allowed invariant degrees},\\[1mm]
    \mathcal J_{\mathrm{quot}}
    &\Longrightarrow&
    \text{first quotient tangent},\\[1mm]
    \mathcal J_{\mathrm{mix}}
    &\Longrightarrow&
    \text{transverse symmetry-breaking tangent},\\[1mm]
    (\mathcal J_{\mathrm{mix}},\mathcal J_{\mathrm{quot}})
    &\Longrightarrow&
    \text{cross-cap geometry and local crossover},\\[1mm]
    \text{higher even quotient jets}
    &\Longrightarrow&
    \text{intrinsic Fisher curvature and higher affine data}.
  \end{array}}
  \label{eq:geometric-information-hierarchy}
\end{equation}

This hierarchy explains why the asymptotic crossover and the intrinsic
information geometry should not be merged into a single invariant.
The former describes \emph{how the symmetric quotient is approached};
the latter describes \emph{the geometry that remains once the symmetry is
exact}.

The intrinsic branch of this hierarchy is developed next on the identifiable
quotient
\(
Q^\circ\subset Q
\),
through its Fisher metric, higher even jets, the Amari--\v{C}encov tensor and
connection holonomy.

\section{Intrinsic Fisher--Amari geometry of the limiting quotient}
\label{sec:quotient-information-geometry}

We now restrict attention to the identifiable quotient itself.
For the benchmark holonomy calculation we use the connected fast-rate domain
$\beta\in\mathbb R$ (equivalently $c>0$), so that the control points
$c=\log2$ and $c=2\log2$ lie in the same regular component.  Locally near the
reflection face, the relevant two-dimensional parameter space is
\begin{equation}
  Q
  =
  \left\{
    (\beta,\mu):
    \beta\in U_\beta,\ \mu\geq0
  \right\},
  \qquad
  Q^\circ
  =
  \left\{
    (\beta,\mu):
    \mu>0
  \right\}.
  \label{eq:quotient-Q-section8}
\end{equation}
The boundary $\mu=0$ is the image of the reflection fixed locus.
Unlike the signed parametrization $(\beta,\lambda)$, the quotient coordinates
remain statistically regular there.

This section concerns the intrinsic geometry of the full two-parameter
quotient family.
In particular, $\beta$ is \emph{not} eliminated as a nuisance coordinate in
the curvature and holonomy calculations.
Projecting it out would leave an effectively one-dimensional model and would
therefore destroy the two-dimensional intrinsic geometry studied below.
For background on the Fisher metric, the Amari--\v{C}encov tensor and
dual connections, see
\cite{Chentsov1982,AmariNagaoka2000,AyJostLeSchwachhofer2017}.

\subsection{Fisher metric up to the identifiable boundary}
\label{subsec:quotient-fisher-boundary}

Recall from Section~\ref{subsec:quotient-coordinate} that, at $\mu=0$,
the Fisher metric in the coordinates $(\beta,\mu)$ is
\begin{equation}
  g_F
  =
  \begin{pmatrix}
    I(c)
    &
    -cV(d)I(c)
    \\[1mm]
    -cV(d)I(c)
    &
    V(d)^2M_2(c)
  \end{pmatrix},
  \label{eq:boundary-fisher-section8}
\end{equation}
where
\begin{equation}
  d=A\Delta t,
  \qquad
  c=\frac{B_0\Delta t}{2},
  \qquad
  \rho=e^{-c},
  \qquad
  V(d)=\frac{2(d-1+e^{-d})}{d^2},
  \label{eq:dc-section8}
\end{equation}
and $I(c)$ and $M_2(c)$ are given in
\eqref{eq:Ic}--\eqref{eq:M2c}.
Its determinant is
\begin{equation}
  \det g_F
  =
  V(d)^2
  I(c)
  \left[
    M_2(c)-c^2I(c)
  \right]
  >0
  \qquad
  (d,c>0).
  \label{eq:boundary-fisher-determinant}
\end{equation}
Hence the Fisher metric extends non-degenerately to the identifiable
boundary.

The rescaled quotient coordinate
\begin{equation}
  \nu
  =
  V(d)\mu
  \label{eq:nu-coordinate}
\end{equation}
is useful because the boundary metric becomes
\begin{equation}
  g_F\big|_{\nu=0}
  =
  \begin{pmatrix}
    I(c)
    &
    -cI(c)
    \\[1mm]
    -cI(c)
    &
    M_2(c)
  \end{pmatrix}.
  \label{eq:boundary-fisher-nu}
\end{equation}
Thus the slow-texture factor $V(d)$ can be absorbed from the first-order
metric at the boundary.
This does not remove the slow dynamics from the statistical problem:
it only separates the magnitude of the quotient tangent from the normalized
shape of the two-dimensional Fisher geometry.

The contrast with the signed coordinates is worth recalling.
The pull-back under
\(
\mu=\lambda^2/2
\)
satisfies
\begin{equation}
  q^*g_F
  =
  g_{\beta\beta}\,d\beta^2
  +
  2\lambda g_{\beta\mu}\,d\beta\,d\lambda
  +
  \lambda^2g_{\mu\mu}\,d\lambda^2,
  \label{eq:section8-pullback}
\end{equation}
and therefore degenerates at $\lambda=0$.
The regularity of \eqref{eq:boundary-fisher-section8} is a property of the
identifiable quotient, not of the original signed parametrization.

\subsection{Higher even jets and exact Fisher curvature}
\label{subsec:quotient-curvature}

The boundary metric alone does not determine the Gaussian curvature.
As established in Section~\ref{subsec:benchmark-higher-even-jet}, one needs
the next even density jet
\(
\partial_{\mu\mu}^2p|_{\mu=0}
\),
equivalently the order-$\lambda^4$ expansion in the signed coordinate.

We use the Hellinger immersion
\begin{equation}
  \Phi(\beta,\mu)
  =
  2\sqrt{p_{\beta,\mu}}
  \label{eq:Hellinger-immersion-section8}
\end{equation}
into the radius-$2$ sphere of $L^2$.
The induced metric is exactly the Fisher metric.
If $\mathrm{II}$ denotes the second fundamental form of the quotient surface,
the Gauss equation expresses its intrinsic curvature in terms of the normal
components of the second derivatives of $\Phi$.
The order-$\lambda^4$ calculation in
\eqref{eq:second-quotient-density-jet}
therefore supplies precisely the additional information required.

The reparametrization
\(
\nu=V(d)\mu
\)
produces a substantial simplification.
After projecting the second Hellinger derivatives onto the normal space,
all dependence on the higher slow-texture coefficients
\(
V(2d)
\)
and
\(
K_{112}(d)
\)
cancels from the Gauss numerator.
The resulting boundary curvature depends only on the fast correlation
parameter
\begin{equation}
  s
  =
  \rho^2
  =
  e^{-2c}
  =
  e^{-B_0\Delta t}
  \in(0,1).
  \label{eq:s-curvature}
\end{equation}

\begin{theorem}[One-sided boundary curvature of the limiting quotient]
\label{thm:exact-quotient-curvature}
For every fixed $d>0$ and $c>0$, the Gaussian curvature of the Fisher metric admits at the identifiable boundary the one-sided value
\begin{equation}
  K_F(s)
  =
  -
  \frac{
    45s^6
    +534s^5
    +1887s^4
    +3172s^3
    +2075s^2
    +422s
    +57
  }{
    4(1+s)^2(3+s)^2(1+3s)^2
  }.
  \label{eq:KF-exact}
\end{equation}
Consequently,
\begin{equation}
  K_F(s)<0
  \qquad
  \text{for every }0<s<1.
  \label{eq:KF-negative}
\end{equation}
Moreover,
\begin{equation}
  \lim_{c\to0}K_F=-2,
  \qquad
  \lim_{c\to\infty}K_F=-\frac{19}{12}.
  \label{eq:KF-limits}
\end{equation}
In particular, the boundary curvature is independent of $d=A\Delta t$.
\end{theorem}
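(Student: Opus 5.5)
Our plan is to compute the Gaussian curvature through the Hellinger immersion $\Phi=2\sqrt{p_{\beta,\nu}}$ into the radius-$2$ sphere, using the Gauss equation. We work in the coordinates $(\beta,\nu)$ with $\nu=V(d)\mu$. Write $\Phi_i,\Phi_{ij}$ for derivatives at $\nu=0$. The metric is $g_{ij}=\langle\Phi_i,\Phi_j\rangle$, given by \eqref{eq:boundary-fisher-nu}. The ambient sphere contributes constant curvature $1/4$, so
\[
K_F=\frac14+\frac{\langle \mathrm{II}_{\beta\beta},\mathrm{II}_{\nu\nu}\rangle-\|\mathrm{II}_{\beta\nu}\|^2}{\det g}.
\]
Here $\mathrm{II}_{ij}$ is the component of $\Phi_{ij}$ orthogonal to $\Phi_\beta$, $\Phi_\nu$ and to the radial vector $\Phi$ itself. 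The last orthogonality handles the sphere term automatically if we project onto the orthogonal complement of $\operatorname{span}\{\sqrt p,\Phi_\beta,\Phi_\nu\}$ in $L^2$.

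We express all second derivatives as $\sqrt{p_0}$ times polynomials in the Gaussian fast-transition scores. Write $\Phi_{ij}=\sqrt{p}\,(\ell_{ij}+\tfrac12\ell_i\ell_j)$ with $\ell=\log p$. The $\beta$-jets are $q,h$ from \eqref{eq:qhk-def}. The first $\nu$-jet is $k$, by \eqref{eq:first-quotient-density-jet} divided by $V$. The mixed jet $\partial_\beta\partial_\nu p$ follows by applying $D$ to $(D^2-D)p$, since the $\mu$-jet is a differential operator in $\beta$. The pure jet $\partial_{\nu\nu}p$ comes from \eqref{eq:second-quotient-density-jet} divided by $V^2$, giving the operator $8V^{-2}[a_1D+\tfrac{a_2}{2}D^2+\tfrac{a_3}{6}D^3+\tfrac{V^2}{8}D^4]$.

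Every object is then of the form $(P(D)p)/p$ for a polynomial $P$. So all the needed inner products reduce to Gaussian moments $\mathbb E[(D^ip/p)(D^jp/p)]$ for the bivariate OU transition in $y_1$ given stationary $y_0$, with $\beta$-derivatives acting through $c=e^\beta\Delta t/2$. These moments are rational in $\rho$ times powers of $c$, as in \eqref{eq:Ic}--\eqref{eq:M2c}.

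The key structural step is the $d$-independence. The coefficients $a_1,a_2,a_3$ (containing $V_2$ and $K_{112}$) multiply only $D p$, $D^2p$, $D^3p$. The vectors $Dp/\sqrt p$ and $D^2 p/\sqrt p$ lie in the span of $\sqrt p$, $\Phi_\beta$ and $\Phi_{\beta\beta}$. Likewise $D^3p$ appears in $\Phi_{\beta\nu}$ through $D(D^2-D)p$. We will organize the computation as follows. Choose an orthonormal basis of the span of $\{\sqrt p,\Phi_\beta,\Phi_\nu\}$. Then show that the normal parts of $\mathrm{II}_{\nu\nu}$ depending on $a_1,a_2,a_3$ pair against $\mathrm{II}_{\beta\beta}$ in a way that cancels, because $\mathrm{II}_{\beta\beta}\propto$ the normal part of $D^2p$. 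We then need to verify that $a_2$ and $a_3$ combine with $V^2$ into a multiple of $V^2$ alone. The constraint is the relation among $a_i$ visible in the $d\to0$ consistency \eqref{eq:second-jet-short-time}, and more generally the identity $a_1+a_2/2+\dots$ forced by normalization, $\int\partial_{\nu\nu}p=0$. I expect this cancellation to be the main obstacle. If it does not fall out structurally, we will fall back on brute-force symbolic computation of the Gram matrix of $\{\sqrt p, Dp, D^2p, D^3p, D^4p\}/\sqrt p$ and simplification, with the cancellation checked symbolically.

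Once everything is reduced to functions of $\rho$ and $c$, powers of $c$ must also cancel, since curvature is invariant under $\beta\mapsto$ rescaling. The remaining quantities are polynomials in $s=\rho^2$ divided by $(1-s)^k$. Simplifying gives \eqref{eq:KF-exact}, where the denominators $(3+s)(1+3s)$ come from $\det g\propto(\rho^2+3)(3\rho^2+1)$ in \eqref{eq:quotient-positive-factor}, and $(1+s)$ comes from $I(c)$. Negativity then follows because all numerator coefficients are positive. For the limits: $c\to0$ means $s\to1$, where the numerator is $8192$ and the denominator is $4\cdot4\cdot16\cdot16=4096$, giving $-2$. And $c\to\infty$ means $s\to0$, giving $-57/36=-19/12$. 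Independence from $d$ is then automatic from the form of \eqref{eq:KF-exact}.
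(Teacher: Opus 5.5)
Your skeleton is the paper's: the Gauss equation for $\Phi=2\sqrt p$, second derivatives read off from the even density jets, normal projection, and a finite moment computation in the two exponential modes. Your radius-$2$ sphere version with the $+\tfrac14$ term is equivalent to the paper's flat-ambient formula, because the radial part of $\Phi_{ij}$ is $-g_{ij}\Phi/4$.

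The gap is in the step you yourself flag as the crux, the $d$-cancellation. The structural claims there are false, and the repair you propose cannot succeed.
First, $D^2p/\sqrt p=\sqrt p\,(h+q^2)$ is not in $\operatorname{span}\{\sqrt p,\Phi_\beta,\Phi_{\beta\beta}\}$; it differs from $\Phi_{\beta\beta}$ by $\tfrac12\sqrt p\,q^2$. What is true is $D^2p=(D^2-D)p+Dp$, so $D^2p/\sqrt p=\Phi_\nu+\Phi_\beta$ is \emph{tangent}. Its normal part therefore vanishes, and $\mathrm{II}_{\beta\beta}$ cannot be proportional to it; in fact $\mathrm{II}_{\beta\beta}=-\tfrac12\,\mathsf N(\sqrt p\,q^2)$.
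Second, no relation among the coefficients can make them multiples of $V^2$. After division by $V^2$, the $D^3$-coefficient of $\partial^2_{\nu\nu}p$ is $4a_3/(3V^2)=2\mathcal Q-6$, and $\mathcal Q=K_{112}/V^2$ is genuinely non-constant in $d$.
Also, $\int\partial^2_{\nu\nu}p=0$ holds for arbitrary coefficients, since each $\int D^jp=0$. The identity $a_1+a_2/2+a_3/6+V^2/8=0$ that does hold comes from $\mathbb E[A_\lambda]=1$, and it is useless here because $a_1,a_2$ multiply tangent vectors anyway.

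The missing idea is an orthogonality, the paper's identity \eqref{eq:appC-d-cancellation-identity}. The only $d$-dependent normal piece of $\Phi_{\nu\nu}$ is $(2\mathcal Q-6)\,\mathsf N(\sqrt p\,D^3p/p)$. This is equivalent to the paper's $\mathsf N(\sqrt p\,D(D^2-D)p/p)$ term, since $D^2p$ is tangent. It does not occur in $\Phi_{\beta\nu}$, so what you must show is $\langle\mathsf N(\sqrt p\,q^2),\mathsf N(\sqrt p\,D^3p/p)\rangle=0$.

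A clean proof uses a chaos decomposition. Set $\kappa_\pm=\mp\rho/(1\pm\rho)$ and let $L_m$ be the Laguerre polynomials, orthonormal for $\mathrm{Exp}(1)$. The Laguerre generating function gives $p_{c+\Delta}/p_c=\sum_{N\ge0}\Psi_N(e^{-\Delta}-1)^N$, with $\Psi_N=\sum_{m+n=N}\kappa_+^m\kappa_-^nL_m(X_+)L_n(X_-)$. From this:
\begin{itemize}
\item $q=-c\Psi_1$ and $k=c^2(\Psi_1+2\Psi_2)$, so the tangent plane is $\operatorname{span}\{\Psi_1,\Psi_2\}$;
\item $\mathsf N(D^3p/p)=-6c^3\Psi_3$ is pure third chaos;
\item $q^2=c^2\Psi_1^2$ has chaos order at most $2$, with second-chaos part exactly $2c^2\Psi_2$, so $\mathsf N(q^2)$ lies in chaos of order at most $1$.
\end{itemize}
The required inner product therefore vanishes by chaos orthogonality.

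Two smaller corrections. Your fallback Gram matrix of $\{D^jp/\sqrt p\}_{j\le4}$ is not enough. $\Phi_{ij}$ contains $\tfrac12\sqrt p\,S_iS_j$ (that is, $q^2$, $qk$, $k^2$), which is not of the form $P(D)p/p$, so you also need product moments such as $\mathbb E[q^2k^2]$.

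Second, powers of $c$ do not cancel because of some invariance of curvature; $K_F$ is a function of the base point $\beta$. They cancel by homogeneity. Write $D=c\partial_c$, so that
$\partial^2_{\nu\nu}p=\bigl(c^4\partial_c^4+2\mathcal Q\,c^3\partial_c^3+2\tfrac{V(2d)}{V^2}\,c^2\partial_c^2\bigr)p$.
Each $\partial_c^jp/p$ depends on $c$ only through $\rho$. The $V(2d)$ term equals $2\tfrac{V(2d)}{V^2}\,k$, which is tangent. The $\mathcal Q\,c^3$ term is exactly the one removed by the orthogonality. What remains in the numerator scales as $c^6$, like $\det g$. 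So the $c$- and $d$-cancellations are the same fact.

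With these corrections the moment algebra closes and reproduces \eqref{eq:KF-exact}.
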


\begin{proof}
The order-$\lambda^4$ density jet
\eqref{eq:second-quotient-density-jet}
determines
\(
\partial_{\nu\nu}^2\Phi
\)
at the boundary.
Evaluating the normal projections in the two independent Gaussian eigenmodes
of the fast transition and inserting them into the Gauss equation yields
\eqref{eq:KF-exact}.
All coefficients in the numerator are strictly positive, while the
denominator is positive for $0<s<1$, proving
\eqref{eq:KF-negative}.
The limits follow by substituting $s\to1$ and $s\to0$, respectively.
The cancellation of all $d$-dependent terms occurs before the final rational
reduction; the detailed Hellinger calculation is given in
Appendix~\ref{app:cir-ou-formulas}.
\end{proof}

The independence of $K_F$ from $d$ should not be interpreted as independence
of the information content from the slow texture dynamics.
For example, the quotient information in the $\mu$ direction contains the
factor $V(d)^2$.
The curvature instead measures the normalized intrinsic shape of the
two-dimensional family.

By continuity, every sufficiently small interior neighbourhood of the
boundary contains points with
\(
K_F<0
\).
For the Levi--Civita connection this already determines the restricted
holonomy on the connected regular component:
\begin{equation}
  \operatorname{Hol}_0(\nabla^{(0)})
  =
  SO(2).
  \label{eq:LC-holonomy}
\end{equation}
The holonomy in \eqref{eq:LC-holonomy} is an interior statement.
The one-sided boundary jets only provide the curvature certificate from which
the nontrivial interior holonomy follows.

\subsection{The Amari--\v{C}encov tensor and the family of connections}
\label{subsec:amari-tensor}

The Fisher metric does not exhaust the local statistical geometry.
Let
\(
S_i=\partial_i\log p
\)
for the quotient coordinates
\(
\theta^1=\beta
\)
and
\(
\theta^2=\mu
\).
The symmetric cubic tensor is
\begin{equation}
  T_{ijk}
  =
  \mathbb E[
    S_iS_jS_k
  ].
  \label{eq:Amari-tensor}
\end{equation}
Together with $g_F$, it defines the Amari family of torsion-free connections
\begin{equation}
  \Gamma^{(a)k}_{ij}
  =
  \Gamma^{(0)k}_{ij}
  -
  \frac{a}{2}
  g^{k\ell}T_{ij\ell},
  \qquad
  a\in\mathbb R,
  \label{eq:a-connections}
\end{equation}
where $a=0$ is Levi--Civita and $a=\pm1$ are the exponential and mixture
connections in the convention used here.

These connections are considered on the regular interior $Q^\circ$.
Their coefficients and curvature admit one-sided limits at the boundary
because the quotient density has the required even jets.
Those limits are used as algebraic certificates for nearby interior points;
we do not define the connection holonomy \emph{at} the boundary.

A useful invariant of the cubic tensor is its metric trace
\begin{equation}
  \tau_i
  =
  g^{jk}T_{ijk}.
  \label{eq:trace-one-form}
\end{equation}
Taking the trace of the curvature of
\eqref{eq:a-connections} gives
\begin{equation}
  \operatorname{tr}R^{(a)}
  =
  -\frac{a}{2}\,d\tau.
  \label{eq:trace-curvature}
\end{equation}
Thus, whenever
\(
d\tau\neq0
\),
every nonzero $a$-connection has a non-unimodular curvature component.
For a sufficiently small contractible loop $\gamma=\partial\Sigma$ in the
regular interior,
\begin{equation}
  \det
  \operatorname{Hol}^{(a)}_\gamma
  =
  \exp
  \left[
    \frac{a}{2}
    \int_\Sigma d\tau
  \right]
  \label{eq:det-holonomy-small-loop}
\end{equation}
up to the orientation convention.
This already separates the metric connection $a=0$ from the generic
non-metric members of the family, but it does not by itself prove full
$GL^+(2,\mathbb R)$ holonomy.

\subsection{Ambrose--Singer certificate and restricted holonomy}
\label{subsec:holonomy-classification}

We first make explicit how one-sided boundary jets are used in an interior
holonomy argument.

\begin{lemma}[Persistence of a boundary certificate]
\label{lem:boundary-to-interior}
Let $\mathcal D(\beta,\mu)$ be any scalar discriminant built smoothly from the
coefficients of a connection, its curvature and finitely many covariant
derivatives on $Q^\circ$, and assume that these quantities admit continuous
one-sided extensions to $\mu=0$.
If
\begin{equation}
  \mathcal D(\beta_0,0)\neq0,
  \label{eq:boundary-discriminant-nonzero}
\end{equation}
then there exists $\mu_*>0$, arbitrarily small, such that
\begin{equation}
  \mathcal D(\beta_0,\mu_*)\neq0.
  \label{eq:interior-discriminant-nonzero}
\end{equation}
Any Ambrose--Singer generation criterion certified by
$\mathcal D\neq0$ is therefore realized at a regular interior point.
\end{lemma}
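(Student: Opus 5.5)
The argument is a continuity argument applied to the one-sided extension of $\mathcal D$, followed by a short remark on how the Ambrose--Singer criterion transfers. First I will fix $\beta_0\in U_\beta$ and restrict everything to the vertical segment $\mu\mapsto(\beta_0,\mu)$, $\mu\in[0,\mu_0)$. The connection coefficients, the curvature tensor and the finitely many covariant derivatives entering $\mathcal D$ are smooth on $Q^\circ$ and, by hypothesis, admit continuous one-sided extensions to $\mu=0$. $\mathcal D$ is a smooth function of finitely many such quantities, so the composite
\[
  \mu\longmapsto\mathcal D(\beta_0,\mu)
\]
is continuous on $[0,\mu_0)$, with value at $\mu=0$ equal to the one-sided limit. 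In the benchmark, these extensions are exactly the ones supplied by the even density jets of Section~\ref{subsec:benchmark-higher-even-jet} and the nondegeneracy of $g_F$ at $\mu=0$ from \eqref{eq:boundary-fisher-determinant}. The latter keeps $g^{k\ell}$, and hence $\Gamma^{(a)}$, continuous up to the boundary.

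Next comes the elementary step. Since $\mathcal D(\beta_0,0)\neq0$, continuity at $0$ gives $\eta>0$ such that $|\mathcal D(\beta_0,\mu)-\mathcal D(\beta_0,0)|<|\mathcal D(\beta_0,0)|/2$ for all $0\le\mu<\eta$. Hence $\mathcal D(\beta_0,\mu)\neq0$ on $[0,\eta)$, and in particular for every $\mu_*\in(0,\eta)$. Because $\eta$ may be shrunk freely, $\mu_*$ can be chosen arbitrarily small. A joint-continuity version gives a whole open interior set $\{|\beta-\beta_0|<\eta',\ 0<\mu<\eta\}$ on which $\mathcal D\neq0$, and I will record this slightly stronger form for later use.

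Finally, I will explain the transfer to holonomy. An Ambrose--Singer generation criterion certified by $\mathcal D\neq0$ means the following: at any point where $\mathcal D\neq0$, the curvature endomorphisms $R^{(a)}(X,Y)$ together with their covariant derivatives span a specified Lie subalgebra, for instance $\mathfrak{gl}(2,\mathbb R)$. A natural choice of $\mathcal D$ is a determinant of the coefficient matrix of these endomorphisms in a basis of the candidate algebra. By the Ambrose--Singer theorem, and its infinitesimal form for analytic connections or the inclusion of $\nabla^kR$ values in $\operatorname{hol}$ in general, these values lie in the holonomy algebra at that point. The point $(\beta_0,\mu_*)$ lies in $Q^\circ$, where the connection is defined and smooth, so the criterion applies there and yields the restricted holonomy algebra on the connected component of $Q^\circ$ containing it.

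The only point needing care is the precise meaning of ``continuous one-sided extension'' for composite quantities. I will make explicit that $\mathcal D$ is evaluated at the boundary as the limit of the interior values, not by computing a connection at $\mu=0$. This keeps the statement consistent with the paper's convention that holonomy is not defined at the boundary.
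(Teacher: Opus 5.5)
Your proposal is correct and follows the same route as the paper, whose proof is the single observation that $\mu\mapsto\mathcal D(\beta_0,\mu)$ is continuous up to $\mu=0$ by the one-sided extension hypothesis, so $\mathcal D(\beta_0,0)\neq0$ forces $\mathcal D\neq0$ on some interval $(0,\eta)$; your explicit $\eta$-estimate spells this out. Your remark that covariant curvature derivatives lie in the holonomy algebra without any analyticity assumption is the same justification the paper gives in the paragraph right after the lemma.
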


\begin{proof}
This follows directly from continuity of the one-sided extension.
\end{proof}

The remaining step uses the Ambrose--Singer theorem
\cite{AmbroseSinger1953}, which places parallel transports of curvature
endomorphisms in the holonomy algebra.  Differentiating a transported
curvature endomorphism along a smooth interior curve shows that the
corresponding covariant curvature derivative belongs to the same
finite-dimensional holonomy algebra; no analyticity assumption is needed.

We use the following elementary two-dimensional criterion.
If
\(
X,Y\in\mathfrak{gl}(2,\mathbb R)
\)
satisfy
\begin{equation}
  \operatorname{tr}X\neq0,
  \qquad
  \det[X,Y]\neq0,
  \label{eq:gl2-generation-criterion}
\end{equation}
then
\begin{equation}
  \operatorname{Lie}\{X,Y\}
  =
  \mathfrak{gl}(2,\mathbb R).
  \label{eq:gl2-generation}
\end{equation}
Indeed, the nonsingular commutator rules out a solvable triangularizable
subalgebra, while the nonzero trace supplies the scalar direction in addition
to the $\mathfrak{sl}(2,\mathbb R)$ part.

For the exponential connection $a=1$, let
\begin{equation}
  \mathsf R_1
  =
  R^{(1)}
  (\partial_\beta,\partial_\nu),
  \qquad
  \mathsf D_{\nu,1}
  =
  \left(
    \nabla^{(1)}_{\partial_\nu}
    R^{(1)}
  \right)
  (\partial_\beta,\partial_\nu).
  \label{eq:exp-holonomy-generators}
\end{equation}
At the control point
\(
c=\log2
\),
the one-sided boundary value obtained from the exact quotient jets satisfies
\begin{equation}
  \left.
  \det[\mathsf R_1,\mathsf D_{\nu,1}]
  \right|_{\mu=0}
  >0
  \qquad
  \text{for every }d>0.
  \label{eq:exp-discriminant-positive}
\end{equation}
By Lemma~\ref{lem:boundary-to-interior}, the same discriminant is nonzero at
regular interior points with $\mu>0$ arbitrarily close to the boundary.
The proof reduces all slow-dynamics dependence to
\begin{equation}
  \mathcal Q(d)
  =
  \frac{K_{112}(d)}{V(d)^2},
  \label{eq:Q-holonomy}
\end{equation}
for which
\begin{equation}
  2<\mathcal Q(d)<\frac94
  \qquad
  (d>0).
  \label{eq:Q-holonomy-bound}
\end{equation}
The resulting discriminant factorization is strictly positive on this entire
interval.
Hence
\begin{equation}
  \mathfrak{hol}_0(\nabla^{(1)})
  =
  \mathfrak{gl}(2,\mathbb R),
  \qquad
  \operatorname{Hol}_0(\nabla^{(1)})
  =
  GL^+(2,\mathbb R).
  \label{eq:exp-holonomy}
\end{equation}
Duality with respect to the Fisher metric gives the same restricted holonomy
for the mixture connection $a=-1$.

The values $a=\pm1$ require the transverse certificate above because a
particular longitudinal discriminant vanishes there.
For the remaining nonzero values of $a$, two longitudinal control points,
\begin{equation}
  c=\log2,
  \qquad
  c=2\log2,
  \label{eq:two-holonomy-control-points}
\end{equation}
are sufficient.
After removal of the trivial factors
\(
a^2(a^2-1)
\),
the two commutator discriminants reduce to cubic polynomials in
\(
a^2
\)
whose coefficients depend on $d$ only through
\(
\mathcal Q(d)
\).
Their resultant is strictly positive for
\(
2\leq\mathcal Q\leq9/4
\).
Consequently the two boundary discriminants cannot vanish simultaneously:
for every fixed $a\neq0,\pm1$, at least one of the two control points has a
nonzero one-sided boundary discriminant.
Lemma~\ref{lem:boundary-to-interior} then provides a nearby regular interior
point at which the generation criterion
\eqref{eq:gl2-generation-criterion} holds.
Supplementary Appendix~S4 records the exact transverse factorization, the
resultant, and the Bernstein positivity certificate.

\begin{theorem}[Restricted holonomy of the quotient Amari family]
\label{thm:full-amari-holonomy}
Fix $d=A\Delta t>0$.
On the connected regular component of the Gaussian-texture quotient
containing the interior points adjacent to $\mu=0$,
\begin{equation}
  \operatorname{Hol}_0
  \left(
    \nabla^{(a)}
  \right)
  =
  \begin{cases}
    SO(2),
    &
    a=0,
    \\[1mm]
    GL^+(2,\mathbb R),
    &
    a\neq0.
  \end{cases}
  \label{eq:full-amari-holonomy}
\end{equation}
Thus Levi--Civita is the unique exceptional connection in the full
Amari family for this quotient model.
\end{theorem}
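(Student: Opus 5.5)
The proof splits into the cases $a=0$, $a=\pm1$ and $a\neq0,\pm1$. Throughout, everything is reduced to the two-dimensional Ambrose--Singer algebra at regular interior points near the boundary, using Lemma~\ref{lem:boundary-to-interior} to transfer one-sided boundary certificates inward.

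\emph{Case $a=0$.} $\nabla^{(0)}$ is metric, so $\operatorname{Hol}_0(\nabla^{(0)})\subset SO(2)$. Theorem~\ref{thm:exact-quotient-curvature} gives a one-sided boundary curvature $K_F(s)<0$. By continuity of the one-sided extension (Lemma~\ref{lem:boundary-to-interior} with $\mathcal D=K_F$), there are interior points of the connected regular component where the curvature endomorphism $R^{(0)}(\partial_\beta,\partial_\nu)$ is a nonzero element of $\mathfrak{so}(2)$. Ambrose--Singer then gives $\mathfrak{hol}_0=\mathfrak{so}(2)$. Since $SO(2)$ is connected, $\operatorname{Hol}_0=SO(2)$.

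\emph{Common step for $a\neq0$.} For every $a\neq0$ the target group is $GL^+(2,\mathbb R)$, the identity component of $GL(2,\mathbb R)$; it contains any connected restricted holonomy group. So it suffices to show $\mathfrak{hol}_0(\nabla^{(a)})=\mathfrak{gl}(2,\mathbb R)$. The tool is the generation criterion \eqref{eq:gl2-generation-criterion}. Both $X=R^{(a)}(\partial_\beta,\partial_\nu)$ and $Y=(\nabla^{(a)}_{\partial_\nu}R^{(a)})(\partial_\beta,\partial_\nu)$, or $Y=(\nabla^{(a)}_{\partial_\beta}R^{(a)})(\partial_\beta,\partial_\nu)$, lie in the holonomy algebra at an interior point, by Ambrose--Singer and the differentiation remark following Lemma~\ref{lem:boundary-to-interior}.

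For the trace condition I would use \eqref{eq:trace-curvature}: $\operatorname{tr}X=-\tfrac a2\,d\tau(\partial_\beta,\partial_\nu)$. The required fact is that $d\tau\neq0$ somewhere near the boundary, which I would check from the boundary jets of the cubic tensor. Alternatively, the trace condition can be bypassed. A nonsingular commutator already forces $\mathfrak{sl}(2,\mathbb R)\subset\mathfrak{hol}$, and the trace of any curvature element then supplies the scalar part. Nonvanishing of $d\tau$ is needed only at one point, and it can be combined by intersecting open sets.

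\emph{Case $a=\pm1$.} Use the transverse certificate \eqref{eq:exp-discriminant-positive}. Its positivity reduces to a factorization in $\mathcal Q(d)\in(2,9/4)$ via \eqref{eq:Q-holonomy-bound}. Lemma~\ref{lem:boundary-to-interior} transfers it inward, and the open set where both conditions hold is nonempty. Fisher duality carries the result from $a=1$ to $a=-1$, since the dual connection has holonomy conjugate to the inverse transpose.

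\emph{Case $a\neq0,\pm1$.} Use the two longitudinal control points $c=\log2$ and $c=2\log2$. After removing $a^2(a^2-1)$, the boundary discriminants are cubics in $a^2$ with coefficients in $\mathcal Q$. Their resultant is positive on $[2,9/4]$, so for each such $a$ one discriminant is nonzero, and Lemma~\ref{lem:boundary-to-interior} transfers it inward. Both control points lie in the same connected component for $\beta\in\mathbb R$, so the holonomy algebra is the same up to conjugation.

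The main obstacle is the exact symbolic certificate: computing one-sided boundary values of $R^{(a)}$ and $\nabla R^{(a)}$ from the even jets up to order $\lambda^6$ (third $\mu$-derivative), reducing all $d$-dependence to $\mathcal Q$, and proving positivity of the resultant and discriminant factorizations uniformly on $[2,9/4]$. For this step I would rely on the Bernstein-basis certificate of Supplementary Appendix~S4.
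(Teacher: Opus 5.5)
Your proposal is correct and matches the paper's proof essentially step for step. It uses the negative one-sided boundary curvature transferred inward for $a=0$, the transverse $c=\log2$ certificate plus Fisher duality for $a=\pm1$, the two-control-point resultant for the remaining $a$, Lemma~\ref{lem:boundary-to-interior} to move each certificate into $Q^\circ$, and conjugacy of holonomy algebras along the connected component. Your observation that a nonsingular commutator already forces $\mathfrak{sl}(2,\mathbb R)\subset\mathfrak{hol}_0$ is a valid sharpening of criterion \eqref{eq:gl2-generation-criterion}: it means the trace condition needs $d\tau\neq0$ only at a single point of the component. Note, however, that this nonvanishing is still not proved in your proposal; like the discriminant positivity, it is delegated to the supplementary certificate, just as the paper does.
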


\begin{proof}
For $a=0$, strict negative Fisher curvature at interior points arbitrarily
close to the boundary gives the holonomy algebra
\(
\mathfrak{so}(2)
\),
hence restricted holonomy $SO(2)$.

For $a=\pm1$, the transverse boundary certificate at $c=\log2$ persists,
by Lemma~\ref{lem:boundary-to-interior}, at a nearby interior point and generates
\(
\mathfrak{gl}(2,\mathbb R)
\).
For $a\neq0,\pm1$, the two-point resultant certificate guarantees that at
least one of the boundary control points in
\eqref{eq:two-holonomy-control-points}
has a nonzero boundary discriminant; continuity gives a nearby interior point
with curvature generators satisfying
\eqref{eq:gl2-generation-criterion}.
Thus the holonomy algebra is
\(
\mathfrak{gl}(2,\mathbb R)
\).
Restricted holonomy is connected, so the corresponding group is
\(
GL^+(2,\mathbb R)
\).
Finally, holonomy algebras at two points of the same connected regular
component are conjugate by parallel transport, which extends the
classification from the control points to the whole component.
\end{proof}

\subsection{Hierarchy of intrinsic information}
\label{subsec:intrinsic-hierarchy}

The quotient geometry obtained above is stratified by jet order.
At the boundary,
\begin{equation}
  \partial_\mu p
  \quad
  \text{determines the first identifiable tangent and the boundary Fisher
  metric},
  \label{eq:hierarchy-first}
\end{equation}
while
\begin{equation}
  \partial_{\mu\mu}^2p
  \quad
  \text{enters the second fundamental form and the Fisher curvature}.
  \label{eq:hierarchy-second}
\end{equation}
The complete Ambrose--Singer certificate for the non-metric connections
requires one further transverse derivative, and hence the third quotient jet
\begin{equation}
  \partial_{\mu\mu\mu}^3p,
  \label{eq:hierarchy-third}
\end{equation}
which corresponds to order $\lambda^6$ in the original signed coordinate.

This yields the intrinsic sequence
\begin{equation}
  \boxed{
  \text{first quotient jet}
  \longrightarrow
  g_F
  \longrightarrow
  \text{higher even jets}
  \longrightarrow
  K_F
  \longrightarrow
  (g_F,T)
  \longrightarrow
  \nabla^{(a)}
  \longrightarrow
  \operatorname{Hol}_0(\nabla^{(a)}).
  }
  \label{eq:intrinsic-sequence}
\end{equation}
It should be kept separate from the transverse sequence developed in
Sections~\ref{sec:weak-unfolding}--\ref{sec:predictive}.
The cross-cap and its curved LAN limit describe how the quotient is
\emph{approached}; the Fisher--Amari geometry describes what remains
\emph{inside} the exact quotient once the transverse symmetry-breaking
direction has disappeared.

The two layers together complete the geometric picture of the emergent
symmetry transition.

\section{Conclusion}
\label{sec:conclusion}

We have studied a transition in which a regular observed experiment approaches
an exact reflection quotient.  In the symmetric limit the signed parameter
$\lambda$ loses its first-order score, but the invariant coordinate
$\mu=\lambda^2/2$ remains regular.  The singularity is therefore generated by
an emergent observed symmetry rather than imposed in the original parameter
space.

At the observed Hellinger level, reflection symmetry forces the leading local
displacement to split into a mixed sign-restoring jet and an even quotient
jet, \(\varepsilon\lambda J_-+\lambda^2J_+/2\).
After nuisance elimination, their independence gives a nondegenerate
cross-cap two-jet.  For $n$ observations the corresponding resolution map is
controlled by $\tau_n=\sqrt n\,\varepsilon_n^2$, producing a regular signed
regime, a critical curved Gaussian parabola, and a quotient regime in which
the familiar $n^{-1/4}$ signed scale is simply the pullback of the regular
$n^{-1/2}$ scale in $\mu$.  Predictive factorization shows that the same
critical geometry survives under memory: dependence changes the efficient
Gram metric but not the parity structure or the central $\sqrt n$ scale.

The quotient and its weak unfolding nevertheless carry different information.
Restriction to the symmetric face removes the mixed jet, whereas the
cross-cap two-jet does not determine the higher even derivatives of the
quotient family.  In the CIR--OU benchmark this hierarchy can be followed
exactly: the order-$\lambda^2$ jet determines the first quotient tangent,
order $\lambda^4$ determines the intrinsic Fisher curvature, and order
$\lambda^6$ enters the complete Ambrose--Singer certificate.

On the identifiable quotient the Fisher metric is nondegenerate up to the
boundary, with a strictly negative one-sided boundary curvature.  The nearby
regular interior consequently has nontrivial Levi--Civita holonomy.
For the full Amari family,
\(\operatorname{Hol}_0(\nabla^{(a)})=SO(2)\) at \(a=0\), whereas
\(\operatorname{Hol}_0(\nabla^{(a)})=GL^+(2,\mathbb R)\) for \(a\neq0\).
The extrinsic geometry of the critical Gaussian unfolding and the intrinsic
Fisher--Amari geometry of the exact quotient are therefore complementary
layers of the same symmetry transition, not alternative descriptions of one
object.

The present analysis is intentionally local and one-reflection-dimensional.
The predictive theorem keeps $d=A\Delta t$ and $c=B_0\Delta t/2$ fixed and
does not cover joint high-frequency limits or the non-uniform layer
$\alpha d=O(1)$.  Higher-dimensional isotropy representations, several weakly
broken directions, and quotients with changing stabilizer type provide natural
extensions in which higher-dimensional critical Gaussian submanifolds and a
richer invariant-jet hierarchy may appear.  The mechanism isolated here gives
a concrete template for testing those extensions: an emergent symmetry selects
the quotient jets, while weak symmetry breaking supplies transverse jets whose
statistical resolution can remain visible before the quotient is reached.

\appendix
\section{Observed Hellinger regularity and latent sufficient criteria}
\label{app:hellinger-normal-form}

This appendix separates three logically distinct ingredients used in
Section~\ref{sec:weak-unfolding}:
the Hilbert-valued Hellinger Taylor expansion, a sufficient condition for
rewriting it as a normalized square-root likelihood ratio, and the stronger
latent operator criterion that explains the parity of the two jets in a
restricted class of hidden-variable models.

\subsection{Hilbert-valued two-jet expansion}
\label{app:observed-Hellinger-Taylor}

Let \(\Phi(\varepsilon,\lambda)=2\sqrt{p_{\varepsilon,\lambda}}\in L^2(\varpi)\)
be $C^2$ near the origin and suppose
\(
\Phi(0,\lambda)=\Phi(0,-\lambda)
\).
Then
\(
\partial_\lambda\Phi(0,0)=0
\).
For completeness, the integral remainder used in
Theorem~\ref{thm:weak-breaking-normal-form} may be written
\begin{equation}
\begin{split}
&
\Phi(\varepsilon,\lambda)
-
\Phi(\varepsilon,0)
-
\varepsilon\lambda
\partial_{\varepsilon\lambda}^2\Phi(0,0)
-
\frac{\lambda^2}{2}
\partial_{\lambda\lambda}^2\Phi(0,0)
\\
&\qquad
=
\lambda
\int_0^1
\Big[
  \partial_\lambda\Phi(\varepsilon,t\lambda)
  -
  \varepsilon
  \partial_{\varepsilon\lambda}^2\Phi(0,0)
  -
  t\lambda
  \partial_{\lambda\lambda}^2\Phi(0,0)
\Big]
\,dt.
\label{eq:appA-Hilbert-remainder}
\end{split}
\end{equation}
Continuity of the derivative of
\(
\partial_\lambda\Phi
\)
therefore gives
\begin{equation}
  \left\|
    \text{remainder in \eqref{eq:appA-Hilbert-remainder}}
  \right\|_{L^2(\varpi)}
  =
  o(
    \varepsilon|\lambda|+\lambda^2
  ).
  \label{eq:appA-Hilbert-remainder-bound}
\end{equation}
This proof does not refer to a latent representation.

The normalization
\(
\|\Phi(\varepsilon,\lambda)\|_2^2=4
\)
also gives
\begin{equation}
  \left\langle
    \partial_{\varepsilon\lambda}^2\Phi(0,0),
    \sqrt{p_0}
  \right\rangle
  =
  0,
  \qquad
  \left\langle
    \partial_{\lambda\lambda}^2\Phi(0,0),
    \sqrt{p_0}
  \right\rangle
  =
  0,
  \label{eq:appA-tangent-orthogonality}
\end{equation}
because
\(
\partial_\lambda\Phi(0,0)=0
\).
Thus, when $p_0>0$ on the relevant support, the two Hellinger jets are
represented by centered scores
\(
J_-,J_+\in L_0^2(P_0)
\).

\subsection{From ambient Hellinger displacement to a normalized ratio}
\label{app:normalized-Hellinger-ratio}

The geometric normal form is naturally expressed through
\(
2\sqrt{p_{\varepsilon,\lambda}}
-
2\sqrt{p_{\varepsilon,0}}
\).
For triangular-array likelihood theory one often wants instead
\(
\sqrt{p_{\varepsilon,\lambda}/p_{\varepsilon,0}}
\).
The following is only a sufficient comparison criterion.

Set
\begin{equation}
  b_\varepsilon
  =
  \frac{p_{\varepsilon,0}}{p_0}.
  \label{eq:appA-baseline-ratio}
\end{equation}

\begin{proposition}[A sufficient baseline comparison]
\label{prop:appA-baseline-comparison}
Assume
\begin{equation}
  0<m
  \leq
  b_\varepsilon
  \leq
  M<\infty
  \qquad
  P_0\text{-a.s.},
  \label{eq:appA-baseline-uniform-bounds}
\end{equation}
for all sufficiently small $\varepsilon$, and
\begin{equation}
  \|b_\varepsilon-1\|_{L^\infty(P_0)}
  \longrightarrow0.
  \label{eq:appA-baseline-Linf}
\end{equation}
If the ambient Hellinger normal form
\begin{equation}
\begin{split}
  2\sqrt{p_{\varepsilon,\lambda}}
  -
  2\sqrt{p_{\varepsilon,0}}
  ={}&
  \sqrt{p_0}
  \left(
    \varepsilon\lambda J_-
    +
    \frac{\lambda^2}{2}J_+
  \right)
\\
&+
o_{L^2(\varpi)}
\left(
  \varepsilon|\lambda|+\lambda^2
\right)
\end{split}
  \label{eq:appA-ambient-form}
\end{equation}
holds, then
\begin{equation}
  \sqrt{
    \frac{p_{\varepsilon,\lambda}}
         {p_{\varepsilon,0}}
  }
  =
  1
  +
  \frac12
  \left(
    \varepsilon\lambda J_-
    +
    \frac{\lambda^2}{2}J_+
  \right)
  +
  o_{L^2(P_0)}
  \left(
    \varepsilon|\lambda|+\lambda^2
  \right).
  \label{eq:appA-normalized-form}
\end{equation}
\end{proposition}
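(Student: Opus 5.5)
The argument divides the ambient expansion \eqref{eq:appA-ambient-form} by $2\sqrt{p_{\varepsilon,0}}$ and then replaces $\sqrt{p_0}/\sqrt{p_{\varepsilon,0}}=b_\varepsilon^{-1/2}$ by $1$, controlling every error in $L^2(P_0)$. Write $\eta=\varepsilon|\lambda|+\lambda^2$ and $D=\varepsilon\lambda J_-+\tfrac{\lambda^2}{2}J_+$, and let $\rho_{\varepsilon,\lambda}\in L^2(\varpi)$ denote the remainder in \eqref{eq:appA-ambient-form}, so $\|\rho_{\varepsilon,\lambda}\|_{L^2(\varpi)}=o(\eta)$. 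Since $p_0>0$ on the relevant support and $b_\varepsilon\geq m>0$, we have $p_{\varepsilon,0}>0$ $P_0$-a.s. Dividing gives, $P_0$-a.s.,
\[
\sqrt{\frac{p_{\varepsilon,\lambda}}{p_{\varepsilon,0}}}-1
=\frac12\,b_\varepsilon^{-1/2}D+\frac{\rho_{\varepsilon,\lambda}}{2\sqrt{p_{\varepsilon,0}}}.
\]
I would record at the outset that mass of $p_{\varepsilon,\lambda}$ outside $\{p_0>0\}$ is invisible in $L^2(P_0)$, so the statement is read $P_0$-a.s., as the target norm indicates.

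\textbf{Remainder term.} Its $L^2(P_0)$ norm squared is
\[
\frac14\int\frac{\rho^2}{p_{\varepsilon,0}}\,p_0\,d\varpi=\frac14\int\frac{\rho^2}{b_\varepsilon}\,d\varpi\leq\frac{1}{4m}\|\rho\|_{L^2(\varpi)}^2 .
\]
Hence this term is $o(\eta)$ by the lower bound in \eqref{eq:appA-baseline-uniform-bounds}. This is exactly where the uniform lower bound is used: it converts the $L^2(\varpi)$ remainder into an $L^2(P_0)$ remainder after division by the perturbed baseline.

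\textbf{Leading term.} I would write $\tfrac12 b_\varepsilon^{-1/2}D=\tfrac12 D+\tfrac12(b_\varepsilon^{-1/2}-1)D$. Since $b_\varepsilon\geq m$, the elementary bound $|b^{-1/2}-1|\leq C_m|b-1|$ holds with $C_m$ depending only on $m$. Therefore
\[
\|(b_\varepsilon^{-1/2}-1)D\|_{L^2(P_0)}\leq C_m\|b_\varepsilon-1\|_{L^\infty(P_0)}\,\|D\|_{L^2(P_0)}.
\]
Because $J_\pm\in L^2_0(P_0)$, we have $\|D\|_{L^2(P_0)}\leq \varepsilon|\lambda|\,\|J_-\|+\tfrac{\lambda^2}{2}\|J_+\|=O(\eta)$. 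Combined with \eqref{eq:appA-baseline-Linf}, this term is therefore $o(\eta)$ as $(\varepsilon,\lambda)\to(0,0)$. Collecting the pieces yields \eqref{eq:appA-normalized-form}.

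The step most in need of care is this last one, namely the order of limits. The $o(\eta)$ must hold jointly as $(\varepsilon,\lambda)\to 0$, and not for fixed $\varepsilon$. This works because the $L^\infty$ convergence \eqref{eq:appA-baseline-Linf} gives a factor tending to zero with $\varepsilon$ that multiplies an $O(\eta)$ quantity, so uniformity in $\lambda$ is automatic. The upper bound $M$ is not actually needed beyond guaranteeing finiteness; I would note this rather than force its use.
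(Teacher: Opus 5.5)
Your proof is correct and follows the paper's argument: divide the ambient expansion by $2\sqrt{p_{\varepsilon,0}}=2\sqrt{p_0}\,b_\varepsilon^{1/2}$, transfer the $L^2(\varpi)$ remainder to $L^2(P_0)$ using the lower bound on $b_\varepsilon$, and use $b_\varepsilon^{-1/2}\to1$ in $L^\infty(P_0)$ to replace the multiplier on the leading term. The paper states this in one line; you supply the two estimates it leaves implicit, namely $\tfrac14\int\rho^2 b_\varepsilon^{-1}\,d\varpi\le\tfrac{1}{4m}\|\rho\|_{L^2(\varpi)}^2$ and $|b^{-1/2}-1|\le C_m|b-1|$.
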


\begin{proof}
Divide
\eqref{eq:appA-ambient-form}
by
\(
2\sqrt{p_{\varepsilon,0}}
=
2\sqrt{p_0}\sqrt{b_\varepsilon}
\).
The multiplier
\(
b_\varepsilon^{-1/2}
\)
is uniformly bounded and converges to one in $L^\infty(P_0)$ by
\eqref{eq:appA-baseline-uniform-bounds}--\eqref{eq:appA-baseline-Linf}.
Multiplication therefore preserves the stated $L^2$ remainder and its leading
coefficient.
\end{proof}

Condition
\eqref{eq:appA-baseline-Linf}
is deliberately stronger than necessary.
It is useful as a transparent sufficient criterion, but the benchmark in this
paper is not certified through this route.
The independent and predictive LAN theorems instead assume the required
normalized Hellinger or density-ratio expansion directly.

\paragraph{Latent sufficient mechanism.}
A stronger hidden-variable criterion identifies the two parity-selected jets when
all weak-symmetry-breaking dependence can be placed in an $L^2$ perturbation of
the latent law and the observation kernel admits a second-order operator
expansion.  The statement and quantitative remainder estimate are recorded in
Supplementary Appendix~S1; they are not required for the CIR--OU benchmark.

\subsection{Stability of nuisance projection}
\label{app:nuisance-projection-stability}

Let the finite-dimensional nuisance scores under $P_{\varepsilon,0}$ be
transported isometrically to $L^2(P_0)$ and suppose that they converge there
to a linearly independent limiting score family.
If $\Pi_\varepsilon$ and $\Pi_0$ denote the corresponding orthogonal
projectors, then
\begin{equation}
  \|\Pi_\varepsilon-\Pi_0\|_{\rm op}\longrightarrow0.
  \label{eq:appA-projector-convergence}
\end{equation}
Consequently, a score expansion
$\widetilde S_\varepsilon=\varepsilon J+o_{L^2}(\varepsilon)$ has efficient
part
\begin{equation}
  (I-\Pi_\varepsilon)\widetilde S_\varepsilon
  =
  \varepsilon(I-\Pi_0)J+o_{L^2}(\varepsilon).
  \label{eq:appA-efficient-leading}
\end{equation}
\begin{proposition}[Continuity of the nuisance projector]
\label{prop:appA-projector-stability}
The preceding conclusions hold whenever the transported nuisance scores
converge in $L^2(P_0)$ and their limiting Gram matrix is positive definite.
\end{proposition}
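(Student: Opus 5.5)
The plan is to reduce everything to finite-dimensional linear algebra in $L^2(P_0)$. Let $\xi_{\varepsilon,1},\dots,\xi_{\varepsilon,r}$ be the transported nuisance scores, converging in $L^2(P_0)$ to $\xi_1,\dots,\xi_r$, and write $\Xi_\varepsilon$ for the column vector of them. The limiting Gram matrix $\Gamma_0=\mathbb E_0[\Xi_0\Xi_0^\top]$ is positive definite by hypothesis. The orthogonal projector onto the span can be written explicitly as
\[
\Pi_\varepsilon f=\Xi_\varepsilon^\top\Gamma_\varepsilon^{-1}\langle\Xi_\varepsilon,f\rangle ,
\]
where $\Gamma_\varepsilon$ is the Gram matrix of $\Xi_\varepsilon$ and $\langle\Xi_\varepsilon,f\rangle$ is the vector of inner products.

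\textbf{Step 1.} Gram entries are continuous in $L^2$, so $\Gamma_\varepsilon\to\Gamma_0$. Hence $\Gamma_\varepsilon$ is positive definite for small $\varepsilon$, and matrix inversion is continuous, giving $\Gamma_\varepsilon^{-1}\to\Gamma_0^{-1}$. In particular, $\|\Gamma_\varepsilon^{-1}\|$ is bounded.

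\textbf{Step 2.} Bound the difference by a telescoping decomposition:
\[
\Pi_\varepsilon f-\Pi_0 f
=(\Xi_\varepsilon-\Xi_0)^\top\Gamma_\varepsilon^{-1}\langle\Xi_\varepsilon,f\rangle
+\Xi_0^\top(\Gamma_\varepsilon^{-1}-\Gamma_0^{-1})\langle\Xi_\varepsilon,f\rangle
+\Xi_0^\top\Gamma_0^{-1}\langle\Xi_\varepsilon-\Xi_0,f\rangle .
\]
By Cauchy--Schwarz, $|\langle\Xi_\varepsilon,f\rangle|\le\|\Xi_\varepsilon\|\,\|f\|$. Each of the three terms is then at most a constant times $\|f\|$ multiplied by either $\|\Xi_\varepsilon-\Xi_0\|$ or $\|\Gamma_\varepsilon^{-1}-\Gamma_0^{-1}\|$, and both of these tend to zero. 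This gives \eqref{eq:appA-projector-convergence}. The bound is uniform in $f$ because all the constants are finite-dimensional.

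\textbf{Step 3.} For the efficient leading term, write $\widetilde S_\varepsilon=\varepsilon J+R_\varepsilon$ with $\|R_\varepsilon\|=o(\varepsilon)$, and decompose
\[
(I-\Pi_\varepsilon)\widetilde S_\varepsilon
=\varepsilon(I-\Pi_0)J-\varepsilon(\Pi_\varepsilon-\Pi_0)J+(I-\Pi_\varepsilon)R_\varepsilon .
\]
The middle term is $\varepsilon\,o(1)\|J\|$ by Step 2. The last term has norm at most $\|R_\varepsilon\|=o(\varepsilon)$, since an orthogonal projector has operator norm at most one.

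The only delicate point is the meaning of ``transported isometrically''. The scores live in $L^2(P_{\varepsilon,0})$, and I would take the transport to be $f\mapsto f\sqrt{p_{\varepsilon,0}/p_0}$. This map is unitary onto its image, so it sends orthogonal projectors to orthogonal projectors and preserves Gram matrices. Once this is fixed, the whole argument takes place in the single Hilbert space $L^2(P_0)$. Linear independence of the limiting family is used exactly once, namely to invert $\Gamma_0$.
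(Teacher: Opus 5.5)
Your proof is correct and is essentially the finite-dimensional operator argument the paper defers to its supplement: you use the explicit Gram-matrix formula for the projector, continuity of the Gram matrix and its inverse, a three-term telescoping bound uniform in $f$, and the contraction $\|I-\Pi_\varepsilon\|\le 1$ for the remainder. Your transport $f\mapsto f\sqrt{p_{\varepsilon,0}/p_0}$ is isometric provided $P_{\varepsilon,0}\ll P_0$, and it is the natural choice consistent with the Hellinger tangent isometry $\mathcal I_H$ used in the main text.
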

The finite-dimensional operator proof, including the transported-score
construction, is given in Supplementary Section~S1.

\subsection{How the CIR--OU benchmark enters the theorem}
\label{app:benchmark-Hellinger-checklist}

The CIR--OU benchmark is not certified by
the quantitative latent criterion in Supplementary Appendix~S1.
At finite texture variance the control parameter
\(
\varepsilon=\alpha^{-1/2}
\)
changes the standardized CIR dynamics and the coherent observation map, not
only the stationary latent law.
Moreover, the stationary Gamma-to-Gaussian expansion is naturally a local or
moment expansion rather than a global $L^2(\nu_0)$ Radon--Nikodym perturbation.

The benchmark instead uses the observed fixed-horizon DQM calculation.
Conditioned on a standardized texture trajectory,
the observed vector is Gaussian, and differentiation of the conditional
Gaussian likelihood followed by projection onto the observed sigma-field
gives
\begin{equation}
  S_{\lambda,\mathrm{obs}}
  =
  \varepsilon J_-
  +
  o_{L^2}(\varepsilon)
  \label{eq:appA-benchmark-odd-DQM}
\end{equation}
at $\lambda=0$.
The strict Gaussian-texture quotient calculation independently gives
\begin{equation}
  S_\mu=J_+.
  \label{eq:appA-benchmark-even-DQM}
\end{equation}
Their explicit one-transition representatives and Gram matrix are derived in
Appendix~\ref{app:cir-ou-formulas}.
Thus the benchmark verifies the observed two-jet directly, while the latent
criterion above remains a structural sufficient mechanism for a narrower
class of models.

\section{Triangular arrays and predictive Gaussian limits}
\label{app:local-asymptotics}

This appendix supplies the likelihood expansions used in
Sections~\ref{sec:local-asymptotics} and \ref{sec:predictive}.
Two points deserve to be made explicit.
First, a Hellinger expansion determines the full quadratic drift of the
log-likelihood only after the normalization of the density ratio has been
used.
Second, when nuisance parameters are present, the efficient jets arise from
the Gaussian quotient of the joint local experiment, or equivalently from a
least-favourable local nuisance adjustment.
The notation ``after nuisance elimination'' in the main text refers to this
standard reduction.

\subsection{Independent triangular arrays}
\label{app:iid-triangular-arrays}

For each $n$, let $P_n=P_{\varepsilon_n,0}$ and let
\(
V_n:\mathsf Y\to\mathbb R^m
\)
be centered under $P_n$:
\begin{equation}
  \mathbb E_n[V_n]=0.
  \label{eq:appB-centered-V}
\end{equation}
In the applications of the main text, $m=2$ and
\(
V_n=(J_{-,\varepsilon_n},J_{+,\varepsilon_n})^\top
\),
or its efficient version after the nuisance reduction described below.
Let
\begin{equation}
  G_n
  =
  \mathbb E_n[V_nV_n^\top]
  \longrightarrow
  G.
  \label{eq:appB-Gn}
\end{equation}

Consider a sequence of local coordinates
\(
v_n\in\mathbb R^m
\)
such that
\begin{equation}
  u_n
  =
  \sqrt n\,v_n
  \longrightarrow
  u.
  \label{eq:appB-un}
\end{equation}
Assume a one-observation Hellinger expansion
\begin{equation}
  \sqrt{\frac{dP_{n,v_n}}{dP_n}}
  =
  1
  +
  \frac12 v_n^\top V_n
  +
  r_n,
  \label{eq:appB-Hellinger}
\end{equation}
where
\begin{equation}
  n\,\mathbb E_n[r_n^2]
  \longrightarrow0.
  \label{eq:appB-r-L2}
\end{equation}
All random variables below are taken under the baseline law $P_n$.

\subsubsection{Normalization of the Hellinger remainder}

Set
\begin{equation}
  w_n
  =
  \frac12v_n^\top V_n+r_n.
  \label{eq:appB-wn}
\end{equation}
Since the density ratio is normalized,
\begin{equation}
  \mathbb E_n[(1+w_n)^2]=1.
  \label{eq:appB-normalization}
\end{equation}
Using \eqref{eq:appB-centered-V},
\begin{equation}
  2\mathbb E_n[r_n]
  +
  \mathbb E_n[w_n^2]
  =
  0.
  \label{eq:appB-Er-exact}
\end{equation}
Thus the mean of the apparently negligible Hellinger remainder carries half
of the deterministic LAN drift.

\begin{lemma}[Mean of the Hellinger remainder]
\label{lem:appB-remainder-mean}
Under
\eqref{eq:appB-Gn}--\eqref{eq:appB-r-L2},
\begin{equation}
  \mathbb E_n[r_n]
  =
  -
  \frac18
  v_n^\top G_n v_n
  +
  o(n^{-1}).
  \label{eq:appB-Er-asymptotic}
\end{equation}
\end{lemma}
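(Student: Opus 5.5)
The plan is to start from the exact identity \eqref{eq:appB-Er-exact}, namely $\mathbb E_n[r_n]=-\tfrac12\mathbb E_n[w_n^2]$. This reduces the lemma to computing $\mathbb E_n[w_n^2]$ up to $o(n^{-1})$. I will expand the square:
\[
  \mathbb E_n[w_n^2]
  =
  \tfrac14\,\mathbb E_n[(v_n^\top V_n)^2]
  +
  \mathbb E_n[(v_n^\top V_n)\,r_n]
  +
  \mathbb E_n[r_n^2].
\]
The first term equals $\tfrac14 v_n^\top G_n v_n$ exactly, by the definition \eqref{eq:appB-Gn} of $G_n$. Taking $-\tfrac12$ of it gives the claimed leading term $-\tfrac18 v_n^\top G_n v_n$.

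It remains to show that the last two terms are $o(n^{-1})$. The term $\mathbb E_n[r_n^2]$ is $o(n^{-1})$ directly from \eqref{eq:appB-r-L2}. For the cross term I will use Cauchy--Schwarz:
\[
  \bigl|\mathbb E_n[(v_n^\top V_n)r_n]\bigr|
  \le
  (v_n^\top G_n v_n)^{1/2}\,(\mathbb E_n[r_n^2])^{1/2}.
\]
Since $u_n=\sqrt n\,v_n\to u$ by \eqref{eq:appB-un} and $G_n\to G$, the matrices $G_n$ are eventually bounded in operator norm. Hence $v_n^\top G_n v_n\le \|G_n\|\,|u_n|^2/n=O(n^{-1})$, so its square root is $O(n^{-1/2})$. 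The factor $(\mathbb E_n[r_n^2])^{1/2}$ is $o(n^{-1/2})$, so the product is $o(n^{-1})$.

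Combining the three pieces gives \eqref{eq:appB-Er-asymptotic}. The only step needing care is the cross term, since it is where the rate $v_n=O(n^{-1/2})$ and the boundedness of $G_n$ are both used. I do not expect a genuine obstacle. The only bookkeeping point is to keep $G_n$ rather than $G$ in the leading term, which is why the statement is phrased with $G_n$; replacing it by $G$ would also be fine, since the error $v_n^\top(G_n-G)v_n$ is itself $o(n^{-1})$.
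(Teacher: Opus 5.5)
Your proposal is correct and matches the paper's proof step for step. Both start from the exact identity $2\mathbb E_n[r_n]+\mathbb E_n[w_n^2]=0$ and take the leading term $\tfrac14 v_n^\top G_n v_n$ exactly. Both bound the cross term by Cauchy--Schwarz using $\|v_n\|=O(n^{-1/2})$ and $\|r_n\|_2=o(n^{-1/2})$, and get $\mathbb E_n[r_n^2]=o(n^{-1})$ directly from the remainder condition.
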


\begin{proof}
By Cauchy--Schwarz,
\[
  \left|
    \mathbb E_n[
      (v_n^\top V_n)r_n
    ]
  \right|
  \leq
  (v_n^\top G_nv_n)^{1/2}
  \|r_n\|_2
  =
  o(n^{-1}),
\]
because
\(
\|v_n\|=O(n^{-1/2})
\)
and
\(
\|r_n\|_2=o(n^{-1/2})
\).
Also
\(
\mathbb E_n[r_n^2]=o(n^{-1})
\).
Hence
\(\mathbb E_n[w_n^2]=\frac14v_n^\top G_nv_n+o(n^{-1})\).
Equation \eqref{eq:appB-Er-exact} gives
\eqref{eq:appB-Er-asymptotic}.
\end{proof}

\subsubsection{Quadratic log-likelihood expansion}

Let
\(
V_{n,1},\ldots,V_{n,n}
\)
and
\(
r_{n,1},\ldots,r_{n,n}
\)
be independent copies.
Define
\begin{equation}
  \Delta_n
  =
  \frac1{\sqrt n}
  \sum_{i=1}^nV_{n,i},
  \qquad
  \widehat G_n
  =
  \frac1n
  \sum_{i=1}^n
  V_{n,i}V_{n,i}^\top.
  \label{eq:appB-Delta-Ghat}
\end{equation}

We assume the vector Lindeberg condition
\begin{equation}
  \mathbb E_n
  \left[
    \|V_n\|^2
    \mathbf 1_{
      \{\|V_n\|>\eta\sqrt n\}
    }
  \right]
  \longrightarrow0
  \qquad
  \text{for every }\eta>0.
  \label{eq:appB-Lindeberg}
\end{equation}
A uniform $L^{2+\delta}$ bound is sufficient.

\begin{lemma}[Quadratic likelihood expansion]
\label{lem:appB-quadratic-LR}
Assume
\eqref{eq:appB-Gn},
\eqref{eq:appB-un},
\eqref{eq:appB-r-L2}
and
\eqref{eq:appB-Lindeberg}.
Then
\begin{equation}
  \log
  \frac{
    dP_{n,v_n}^{\otimes n}
  }{
    dP_n^{\otimes n}
  }
  =
  u_n^\top\Delta_n
  -
  \frac12
  u_n^\top G_nu_n
  +
  o_{P_n^{\otimes n}}(1).
  \label{eq:appB-LAN-expansion}
\end{equation}
Equivalently, one may replace $G_n$ by $\widehat G_n$ in the quadratic term.
\end{lemma}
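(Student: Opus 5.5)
The plan is the standard second-order Taylor expansion of $\log(1+x)$, organised around the identity $\log(dP_{n,v_n}/dP_n)=2\log(1+w_n)$ with $w_n=\tfrac12v_n^\top V_n+r_n$. The log-likelihood ratio is then $\Lambda_n=2\sum_{i=1}^n\log(1+w_{n,i})$. Using $\log(1+x)=x-\tfrac12x^2+x^2R(x)$ with $R(x)\to0$ as $x\to0$, I will split $\Lambda_n$ into three sums: $2\sum_i w_{n,i}$, $-\sum_i w_{n,i}^2$, and $2\sum_i w_{n,i}^2R(w_{n,i})$.

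\textbf{Linear term.} Write $2\sum_i w_{n,i}=u_n^\top\Delta_n+2\sum_i r_{n,i}$. By Lemma~\ref{lem:appB-remainder-mean}, the mean of $2\sum_i r_{n,i}$ is $-\tfrac14 u_n^\top G_nu_n+o(1)$. Its variance is at most $4n\,\mathbb E_n[r_n^2]\to0$ by \eqref{eq:appB-r-L2}. Hence
\[
2\sum_i w_{n,i}=u_n^\top\Delta_n-\tfrac14u_n^\top G_nu_n+o_P(1).
\]

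\textbf{Quadratic term.} Expand $w_{n,i}^2=\tfrac14(v_n^\top V_{n,i})^2+(v_n^\top V_{n,i})r_{n,i}+r_{n,i}^2$. The last piece sums to $o_P(1)$ because its expectation is $n\,\mathbb E r_n^2\to0$. The cross piece is $o_P(1)$ by Cauchy--Schwarz, since $\sum_i|v_n^\top V_{n,i}||r_{n,i}|\le(u_n^\top\widehat G_nu_n)^{1/2}(n\sum_i r_{n,i}^2)^{1/2}$ and the second factor has expectation tending to $0$. The first piece equals $\tfrac14u_n^\top\widehat G_nu_n$. A weak law of large numbers for triangular arrays, driven by the Lindeberg condition \eqref{eq:appB-Lindeberg} (truncate at level $\eta\sqrt n$ and compute the variance of the truncated part), gives $\widehat G_n-G_n\to0$ in probability. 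Therefore
\[
\sum_i w_{n,i}^2=\tfrac14u_n^\top G_nu_n+o_P(1).
\]
Combining with the linear term gives the drift $-\tfrac14-\tfrac14=-\tfrac12$ of $u_n^\top G_nu_n$, which is the claimed expansion. Because $\widehat G_n-G_n=o_P(1)$, the same argument also justifies replacing $G_n$ by $\widehat G_n$.

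\textbf{Cubic remainder (the main obstacle).} It suffices to show $\max_i|w_{n,i}|\to0$ in probability. Then $\sum_i w_{n,i}^2|R(w_{n,i})|\le\max_i|R(w_{n,i})|\cdot\sum_i w_{n,i}^2=o_P(1)\cdot O_P(1)$. For the remainder part, $P(\max_i|r_{n,i}|>\eta)\le n\,\mathbb E r_n^2/\eta^2\to0$. For the score part, $|v_n^\top V_{n,i}|\le\|u_n\|\,\|V_{n,i}\|/\sqrt n$, and Lindeberg gives $P(\max_i\|V_{n,i}\|>\eta\sqrt n)\le\eta^{-2}\mathbb E_n[\|V_n\|^2\mathbf 1_{\{\|V_n\|>\eta\sqrt n\}}]\to0$. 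The only delicate point is that $1+w_{n,i}$ may vanish, where the logarithm is undefined. On the event $\{\max_i|w_{n,i}|<1/2\}$, whose probability tends to one, this cannot happen. Off that event, the ratio may be zero only on a $P_n$-null set, consistent with the usual convention that the log-likelihood ratio is $-\infty$ there.
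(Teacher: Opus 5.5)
Your proposal is correct and follows the paper's proof essentially step for step. You use the same split of $2\sum_i\log(1+w_{n,i})$ into linear, quadratic and Taylor-remainder pieces, and Lemma~\ref{lem:appB-remainder-mean} again lets $2n\,\mathbb E_n[r_n]$ supply half of the drift. The Lindeberg-based law of large numbers for $\widehat G_n$ and the union bounds giving $\max_i|w_{n,i}|\to0$ also match the paper.

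One slip needs fixing in the cross term. Cauchy--Schwarz gives $\sum_i|v_n^\top V_{n,i}|\,|r_{n,i}|\le(u_n^\top\widehat G_nu_n)^{1/2}\bigl(\sum_i r_{n,i}^2\bigr)^{1/2}$, with no extra factor $n$. As you wrote it, the second factor would require $n^2\,\mathbb E_n[r_n^2]\to0$, which is not assumed. The corrected factor has expectation $n\,\mathbb E_n[r_n^2]\to0$, as needed.
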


\begin{proof}
For the $i$th observation set
\(w_{n,i}=\frac12v_n^\top V_{n,i}+r_{n,i}\).
The log-density ratio is
\begin{equation}
  2\log(1+w_{n,i}).
  \label{eq:appB-log-one}
\end{equation}
The Lindeberg condition implies
\(\max_{1\leq i\leq n}|v_n^\top V_{n,i}|\xrightarrow{P}0\),
and
\eqref{eq:appB-r-L2} gives
\(\max_{1\leq i\leq n}|r_{n,i}|\xrightarrow{P}0\)
by the union bound.
Hence
\(
\max_i|w_{n,i}|\to_P0
\).

On the event
\(
\max_i|w_{n,i}|\leq1/2
\),
Taylor's formula gives
\begin{equation}
  2\log(1+w)
  =
  2w-w^2+\mathcal R(w),
  \qquad
  |\mathcal R(w)|
  \leq
  C|w|^3.
  \label{eq:appB-log-Taylor}
\end{equation}
Moreover,
\[
  \sum_{i=1}^nw_{n,i}^2=O_P(1),
\]
because
\(
n\|v_n\|^2=O(1)
\)
and
\(
n\mathbb E_n[r_n^2]\to0
\).
Therefore
\[
  \sum_{i=1}^n
  |\mathcal R(w_{n,i})|
  \leq
  C
  \max_i|w_{n,i}|
  \sum_{i=1}^nw_{n,i}^2
  =
  o_P(1).
\]

For the linear term,
\[
  2\sum_{i=1}^nw_{n,i}
  =
  v_n^\top
  \sum_{i=1}^nV_{n,i}
  +
  2\sum_{i=1}^n
  (r_{n,i}-\mathbb E_n r_n)
  +
  2n\mathbb E_n r_n.
\]
The centered remainder sum is $o_P(1)$ because its variance is bounded by
\(
4n\mathbb E_n[r_n^2]\to0
\).
By Lemma~\ref{lem:appB-remainder-mean},
\begin{equation}
  2n\mathbb E_n r_n
  =
  -
  \frac14
  n v_n^\top G_nv_n
  +
  o(1).
  \label{eq:appB-drift-half-one}
\end{equation}

For the quadratic term,
\[
  \sum_{i=1}^nw_{n,i}^2
  =
  \frac14
  v_n^\top
  \left(
    \sum_{i=1}^n
    V_{n,i}V_{n,i}^\top
  \right)
  v_n
  +
  o_P(1),
\]
the terms involving $r_{n,i}$ being negligible by Cauchy--Schwarz and
\eqref{eq:appB-r-L2}.
The triangular-array law of large numbers implied by
\eqref{eq:appB-Lindeberg} yields
\begin{equation}
  u_n^\top
  (\widehat G_n-G_n)
  u_n
  \xrightarrow{P}0.
  \label{eq:appB-Gram-LLN}
\end{equation}
Thus the quadratic Taylor term contributes
\(-\frac14u_n^\top G_nu_n+o_P(1)\).
Together with
\eqref{eq:appB-drift-half-one},
this gives the full coefficient
\(
-1/2
\)
in \eqref{eq:appB-LAN-expansion}.
\end{proof}

The vector Lindeberg--Feller theorem (see, e.g., \cite{vanDerVaart1998,LeCamYang2000}) gives
\begin{equation}
  \Delta_n
  \Longrightarrow
  Z,
  \qquad
  Z\sim N(0,G).
  \label{eq:appB-vector-CLT}
\end{equation}
Lemma~\ref{lem:appB-quadratic-LR} therefore proves
Theorem~\ref{thm:two-jet-LAN}.

\subsection{Compact-uniform critical experiment}
\label{app:iid-process-limit}

For the critical layer, set
\begin{equation}
  b(\zeta)
  =
  \begin{pmatrix}
    \zeta\\
    \zeta^2/2
  \end{pmatrix},
  \qquad
  v_n(\zeta)
  =
  \delta_n b(\zeta),
  \qquad
  \sqrt n\,\delta_n\to\tau.
  \label{eq:appB-critical-b}
\end{equation}
Pointwise convergence follows directly from the preceding result.
For process convergence on a compact interval
\(
K\subset\mathbb R
\),
we use a slightly stronger uniform remainder condition.

Let
\(
r_n(\cdot)
\)
be a random element of the Sobolev space
\(
W^{1,2}(K)
\).
Assume
\begin{equation}
  n\,
  \mathbb E_n
  \left[
    \|r_n\|_{W^{1,2}(K)}^2
  \right]
  \longrightarrow0.
  \label{eq:appB-iid-Sobolev-remainder}
\end{equation}
In one dimension,
\begin{equation}
  \|f\|_{C(K)}
  \leq
  C_K
  \|f\|_{W^{1,2}(K)}.
  \label{eq:appB-Sobolev-embedding}
\end{equation}
Hence
\eqref{eq:appB-iid-Sobolev-remainder}
makes the accumulated Hellinger remainder uniformly negligible on $K$.

\begin{proposition}[Critical convergence in $C(K)$]
\label{prop:appB-iid-process}
Assume
\eqref{eq:appB-Gn},
\eqref{eq:appB-Lindeberg},
\eqref{eq:appB-critical-b}
and the compact-uniform Hellinger expansion with
\eqref{eq:appB-iid-Sobolev-remainder}.
Then, in $C(K)$,
\begin{equation}
  \Lambda_n(\zeta)
  \Longrightarrow
  \tau\,b(\zeta)^\top Z
  -
  \frac{\tau^2}{2}
  b(\zeta)^\top G b(\zeta),
  \qquad
  Z\sim N(0,G).
  \label{eq:appB-iid-process-limit}
\end{equation}
\end{proposition}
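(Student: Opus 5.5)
The plan is the standard route for weak convergence in $C(K)$: finite-dimensional convergence plus tightness. We write $\Lambda_n(\zeta)$ as a deterministic-plus-linear process in $b(\zeta)$ and a remainder process that is uniformly negligible. Fix a compact interval $K$ and write, for each $\zeta\in K$,
\[
w_{n,i}(\zeta)=\tfrac12\delta_n b(\zeta)^\top V_{n,i}+r_{n,i}(\zeta),
\qquad
\Lambda_n(\zeta)=\sum_{i=1}^n 2\log\bigl(1+w_{n,i}(\zeta)\bigr).
\]
The candidate limit is $\tau b(\zeta)^\top Z-\frac{\tau^2}{2}b(\zeta)^\top Gb(\zeta)$. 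It is a continuous functional of the single Gaussian vector $Z$, and $b$ is polynomial, so the limit is automatically a random element of $C(K)$. The target is therefore the uniform expansion
\begin{equation*}
\sup_{\zeta\in K}\Bigl|\Lambda_n(\zeta)-\sqrt n\,\delta_n\, b(\zeta)^\top\Delta_n+\tfrac12 n\delta_n^2\, b(\zeta)^\top G_n b(\zeta)\Bigr|\xrightarrow{P}0 .
\end{equation*}
Once this holds, the continuous mapping theorem applied to $\Delta_n\Rightarrow Z$ from \eqref{eq:appB-vector-CLT}, together with $\sqrt n\delta_n\to\tau$ and $G_n\to G$, gives \eqref{eq:appB-iid-process-limit} in $C(K)$. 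The main term depends on $\zeta$ only through the fixed finite-dimensional vector $b(\zeta)$, so no separate tightness argument is needed for it.

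To get the uniform expansion I would rerun the proof of Lemma~\ref{lem:appB-quadratic-LR} with suprema over $K$ inserted at each step. First, $\max_i\sup_K|w_{n,i}|\to_P0$. The $V$-part is handled by the Lindeberg condition, since $\sup_K\|b\|<\infty$. For the remainder part, the Sobolev embedding \eqref{eq:appB-Sobolev-embedding} gives $\sup_K|r_{n,i}|\le C_K\|r_{n,i}\|_{W^{1,2}(K)}$, and a union bound together with \eqref{eq:appB-iid-Sobolev-remainder} makes the maximum over $i$ small. With this in hand the cubic Taylor remainder is bounded uniformly by $C\max_i\sup_K|w_{n,i}|\cdot\sum_i\sup_K w_{n,i}^2$. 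The second factor is $O_P(1)$ because $\sum_i\sup_K r_{n,i}^2$ has expectation at most $C_K^2 n\mathbb E\|r_n\|^2_{W^{1,2}}\to0$. The quadratic term $\sum_i w_{n,i}^2$ splits into $\frac14\delta_n^2 b^\top(\sum V V^\top)b$, which is controlled by the law of large numbers \eqref{eq:appB-Gram-LLN} applied entrywise to a fixed matrix, plus cross terms. Those cross terms are bounded uniformly by Cauchy--Schwarz using the same $\sum_i\sup_K r_{n,i}^2\to_P0$.

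The main obstacle is the linear remainder term $2\sum_i(r_{n,i}(\zeta)-\mathbb E_n r_n(\zeta))+2n\mathbb E_n r_n(\zeta)$, which must vanish uniformly in $\zeta$. For the centered sum $S_n(\zeta)=\sum_i(r_{n,i}-\mathbb E r_n)(\zeta)$ I would use independence in the Hilbert space $W^{1,2}(K)$. Centered independent summands are orthogonal there, so $\mathbb E\|S_n\|^2_{W^{1,2}}\le n\mathbb E\|r_n\|^2_{W^{1,2}}\to0$, and the embedding then gives $\sup_K|S_n|\to_P0$. This is exactly where the Sobolev hypothesis replaces a chaining argument. For the deterministic drift, Lemma~\ref{lem:appB-remainder-mean} holds pointwise. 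Its error term is $o(n^{-1})$ uniformly on $K$, because the Cauchy--Schwarz bounds there involve $\sup_K\|b\|$ and $\sup_K\mathbb E r_n(\zeta)^2\le C_K^2\mathbb E\|r_n\|^2_{W^{1,2}}$. Hence $2n\mathbb E r_n(\zeta)=-\frac14 n\delta_n^2 b(\zeta)^\top G_n b(\zeta)+o(1)$ uniformly. Adding this to the uniform quadratic term gives the coefficient $-\frac12$ uniformly on $K$, which completes the argument.
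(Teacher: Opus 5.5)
Your proposal is correct and follows the paper's route. You reduce $\Lambda_n$ to the polynomial process $\sqrt n\,\delta_n b(\zeta)^\top\Delta_n-\frac12 n\delta_n^2 b(\zeta)^\top G_n b(\zeta)$ (the paper writes $\widehat G_n$, which is interchangeable), show via the Sobolev remainder condition that the difference is $o_P(1)$ in sup-norm, and conclude by continuous mapping. You also spell out what the paper's short proof leaves implicit: the $W^{1,2}(K)$ orthogonality bound for the centered remainder sum, and the uniform-in-$\zeta$ version of Lemma~\ref{lem:appB-remainder-mean} for the half of the drift carried by $\mathbb E_n r_n$.
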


\begin{proof}
The leading random term is
\((\sqrt n\,\delta_n)b(\zeta)^\top\Delta_n\),
and the quadratic term is
\(-\frac12n\delta_n^2b(\zeta)^\top\widehat G_nb(\zeta)\).
Both are continuous polynomial functions of $\zeta$ whose random
coefficients converge.
The Sobolev remainder condition makes the difference between the exact
log-likelihood process and this polynomial approximation
$o_P(1)$ in the supremum norm.
The continuous mapping theorem gives
\eqref{eq:appB-iid-process-limit}.
\end{proof}

For
\(
b(\zeta)
=
(\zeta,\zeta^2/2)^\top
\),
the limit in
\eqref{eq:appB-iid-process-limit}
is precisely the curved Gaussian experiment
\eqref{eq:critical-curved-LAN}.

\subsection{Predictive martingale arrays}
\label{app:predictive-martingale-LAN}

We now turn to one dependent trajectory.
Let
\(
(\mathcal F_{n,k})_{0\leq k\leq n}
\)
be the observed filtration under the baseline model and let
\begin{equation}
  V_{n,k}
  =
  \begin{pmatrix}
    J_{-,n,k}\\
    J_{+,n,k}
  \end{pmatrix}
  \label{eq:appB-pred-V}
\end{equation}
be a two-dimensional martingale-difference array:
\begin{equation}
  \mathbb E[
    V_{n,k}
    \mid
    \mathcal F_{n,k-1}
  ]
  =
  0.
  \label{eq:appB-pred-MD}
\end{equation}
After nuisance reduction, the same notation may be used for the efficient
array, with limiting Gram matrix
\(
G_{\rm pred}^{\rm eff}
\).

Let
\begin{equation}
  G_n^{\rm pred}
  =
  \frac1n
  \sum_{k=1}^n
  \mathbb E[
    V_{n,k}V_{n,k}^\top
    \mid
    \mathcal F_{n,k-1}
  ].
  \label{eq:appB-pred-Gram}
\end{equation}
Assume
\begin{equation}
  G_n^{\rm pred}
  \xrightarrow{P}
  G_{\rm pred}
  \label{eq:appB-pred-Gram-limit}
\end{equation}
and the conditional Lindeberg condition
\begin{equation}
  \frac1n
  \sum_{k=1}^n
  \mathbb E
  \left[
    \|V_{n,k}\|^2
    \mathbf 1_{\{\|V_{n,k}\|>\eta\sqrt n\}}
    \mid
    \mathcal F_{n,k-1}
  \right]
  \xrightarrow{P}
  0
  \label{eq:appB-pred-Lindeberg}
\end{equation}
for every $\eta>0$.

The vector martingale central limit theorem \cite{HallHeyde1980} then yields
\begin{equation}
  \Delta_n^{\rm pred}
  =
  \frac1{\sqrt n}
  \sum_{k=1}^nV_{n,k}
  \Longrightarrow
  N(0,G_{\rm pred}).
  \label{eq:appB-pred-CLT}
\end{equation}

\subsubsection{Predictive density-ratio remainder}

Fix a compact interval $K$ and set
\begin{equation}
  b(\zeta)
  =
  \begin{pmatrix}
    \zeta\\
    \zeta^2/2
  \end{pmatrix}.
  \label{eq:appB-pred-b}
\end{equation}
Let
\(
\delta_n\downarrow0
\)
with
\begin{equation}
  \sqrt n\,\delta_n\to\tau\in(0,\infty).
  \label{eq:appB-pred-delta}
\end{equation}
Assume the conditional density ratio has the expansion
\begin{equation}
  R_{n,k}(\zeta)
  =
  1
  +
  \delta_n
  \left[
    b(\zeta)^\top V_{n,k}
    +
    r_{n,k}(\zeta)
  \right].
  \label{eq:appB-pred-ratio}
\end{equation}
Because $R_{n,k}(\zeta)$ is a conditional density ratio,
\begin{equation}
  \mathbb E[
    R_{n,k}(\zeta)
    \mid
    \mathcal F_{n,k-1}
  ]
  =
  1.
  \label{eq:appB-pred-normalization}
\end{equation}
Together with \eqref{eq:appB-pred-MD}, this gives the exact centering
\begin{equation}
  \mathbb E[
    r_{n,k}(\zeta)
    \mid
    \mathcal F_{n,k-1}
  ]
  =
  0.
  \label{eq:appB-pred-r-centered}
\end{equation}

For compact-uniform convergence we impose the following sufficient
Sobolev-envelope condition:
\begin{equation}
  \frac1n
  \sum_{k=1}^n
  \mathbb E
  \left[
    \|r_{n,k}\|_{W^{1,2}(K)}^2
  \right]
  \longrightarrow
  0.
  \label{eq:appB-pred-Sobolev-r}
\end{equation}
This $L^1$ condition is deliberately stronger than convergence in probability
of the corresponding predictable average, because it is exactly what is
needed by the Hilbert-valued martingale estimate below.
If the expansion is stationary for each $n$, it is enough that
\(
\mathbb E\|r_{n,1}\|_{W^{1,2}(K)}^2\to0
\).

\begin{lemma}[Uniform negligibility of the predictive remainder]
\label{lem:appB-pred-remainder}
Under
\eqref{eq:appB-pred-delta},
\eqref{eq:appB-pred-r-centered}
and
\eqref{eq:appB-pred-Sobolev-r},
\begin{equation}
  \sup_{\zeta\in K}
  \left|
    \delta_n
    \sum_{k=1}^n
    r_{n,k}(\zeta)
  \right|
  \xrightarrow{P}
  0.
  \label{eq:appB-pred-linear-r-negligible}
\end{equation}
\end{lemma}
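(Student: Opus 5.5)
Set $M_n(\zeta)=\sum_{k=1}^n r_{n,k}(\zeta)$ and view $M_n$ as a sum of $W^{1,2}(K)$-valued random elements. By \eqref{eq:appB-pred-r-centered}, which holds for each $\zeta$, the increments $r_{n,k}$ are martingale differences in the Hilbert space $W^{1,2}(K)$ with respect to $(\mathcal F_{n,k})$. Conditional centering passes from pointwise to the Hilbert-valued version because $\zeta\mapsto r_{n,k}(\zeta)$ and its weak derivative are measurable, and $\mathbb E[\|r_{n,k}\|_{W^{1,2}}^2]<\infty$ lets us exchange conditional expectation with the inner product against any fixed test element. The plan is to control $\sup_K|\delta_n M_n|$ through the Sobolev embedding \eqref{eq:appB-Sobolev-embedding}. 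This reduces the claim to showing $\delta_n\|M_n\|_{W^{1,2}(K)}\to0$ in $L^2$, hence in probability.

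The key step is Hilbert-space orthogonality of martingale increments. For $j<k$ we have
\[
\mathbb E\langle r_{n,j},r_{n,k}\rangle_{W^{1,2}}
=\mathbb E\bigl\langle r_{n,j},\mathbb E[r_{n,k}\mid\mathcal F_{n,k-1}]\bigr\rangle_{W^{1,2}}=0 .
\]
This can be justified either by Bochner conditional expectation in a separable Hilbert space, or concretely by writing the $W^{1,2}$ inner product as $\int_K (fg+f'g')\,d\zeta$ and using Fubini with the pointwise centering of $r_{n,k}(\zeta)$ and of $\partial_\zeta r_{n,k}(\zeta)$. The latter centering follows by differentiating \eqref{eq:appB-pred-r-centered} in $\zeta$ in the distributional sense, which is legitimate under the square-integrability assumption. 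It follows that
\[
\mathbb E\bigl[\|M_n\|^2_{W^{1,2}(K)}\bigr]=\sum_{k=1}^n\mathbb E\bigl[\|r_{n,k}\|^2_{W^{1,2}(K)}\bigr].
\]

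Combining the embedding with this identity and \eqref{eq:appB-pred-delta} gives
\[
\mathbb E\Bigl[\sup_K|\delta_nM_n|^2\Bigr]\le C_K^2\,n\delta_n^2\cdot\frac1n\sum_{k=1}^n\mathbb E\|r_{n,k}\|^2_{W^{1,2}(K)}
=C_K^2\,(\tau^2+o(1))\cdot o(1)\to0
\]
by \eqref{eq:appB-pred-Sobolev-r}. Chebyshev's inequality then yields \eqref{eq:appB-pred-linear-r-negligible}.

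The main obstacle is the rigorous justification of the orthogonality step. It requires a jointly measurable version of $(\omega,\zeta)\mapsto r_{n,k}$ and the transfer of conditional centering to the derivative $\partial_\zeta r_{n,k}$. I would handle this by pairing with smooth test functions $\phi\in C_c^\infty(\operatorname{int}K)$ and with the constants, then using density in $W^{1,2}(K)$. This is exactly why the $L^1$-in-expectation envelope condition, rather than mere convergence in probability, is imposed.
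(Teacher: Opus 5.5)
Your proposal is correct and follows the paper's proof essentially step for step. The paper also uses the embedding $W^{1,2}(K)\hookrightarrow C(K)$, then the martingale orthogonality identity $\mathbb E\|M_n\|_{W^{1,2}(K)}^2=\sum_k\mathbb E\|r_{n,k}\|_{W^{1,2}(K)}^2$, then $n\delta_n^2\cdot o(1)\to0$ and Chebyshev. Your care over the Hilbert-valued centering improves on the paper's one-line remark about the weak derivative, and it can be shortened further. Point evaluations are continuous linear functionals on $W^{1,2}(K)$ in one dimension, so the pointwise centering already forces the $W^{1,2}(K)$-valued conditional expectation to vanish, with no need to differentiate the centering identity.
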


\begin{proof}
The process
\(M_n(\zeta)=\delta_n\sum_{k=1}^nr_{n,k}(\zeta)\)
is a martingale in $k$ for each $\zeta$.
The same is true for its weak derivative in $\zeta$ under the differentiability
implicit in
\eqref{eq:appB-pred-Sobolev-r}.
Using the one-dimensional Sobolev inequality,
\(\|M_n\|_{C(K)}^2\leq C_K\|M_n\|_{W^{1,2}(K)}^2\).
Martingale orthogonality gives
\[
\begin{split}
\mathbb E
\left[
  \|M_n\|_{W^{1,2}(K)}^2
\right]
&=
\delta_n^2
\sum_{k=1}^n
\mathbb E
\left[
  \|r_{n,k}\|_{W^{1,2}(K)}^2
\right]
\\
&=
(n\delta_n^2)\,
o(1)
=
o(1),
\end{split}
\]
because
\(
n\delta_n^2\to\tau^2
\).
\end{proof}

\subsubsection{Quadratic predictive likelihood}

Define
\begin{equation}
  H_{n,k}(\zeta)
  =
  b(\zeta)^\top V_{n,k}.
  \label{eq:appB-pred-H}
\end{equation}
Under
\eqref{eq:appB-pred-Lindeberg},
\begin{equation}
  \max_{1\leq k\leq n}
  \sup_{\zeta\in K}
  |\delta_n H_{n,k}(\zeta)|
  \xrightarrow{P}0.
  \label{eq:appB-pred-max-small}
\end{equation}
Together with the remainder control, this allows the logarithm to be expanded
uniformly.

\begin{proposition}[Predictive quadratic expansion]
\label{prop:appB-pred-quadratic}
Under
\eqref{eq:appB-pred-Gram-limit},
\eqref{eq:appB-pred-Lindeberg},
\eqref{eq:appB-pred-delta}
and
\eqref{eq:appB-pred-Sobolev-r},
the predictive log-likelihood process satisfies, in $C(K)$,
\begin{equation}
\begin{split}
  \Lambda_n(\zeta)
  ={}&
  \delta_n
  \sum_{k=1}^n
  H_{n,k}(\zeta)
  \\
  &-
  \frac{\delta_n^2}{2}
  \sum_{k=1}^n
  H_{n,k}(\zeta)^2
  +
  o_P(1),
  \qquad
  \zeta\in K.
  \label{eq:appB-pred-log-expansion}
\end{split}
\end{equation}
Moreover,
\begin{equation}
  \Lambda_n(\zeta)
  \Longrightarrow
  \tau\,b(\zeta)^\top Z_{\rm pred}
  -
  \frac{\tau^2}{2}
  b(\zeta)^\top
  G_{\rm pred}
  b(\zeta)
  \label{eq:appB-pred-process-limit}
\end{equation}
in $C(K)$, where
\(
Z_{\rm pred}\sim N(0,G_{\rm pred})
\).
\end{proposition}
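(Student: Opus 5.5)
The plan is to take logarithms of the predictive factorization, so that $\Lambda_n(\zeta)=\sum_{k=1}^n\log R_{n,k}(\zeta)$. Then write $R_{n,k}(\zeta)=1+x_{n,k}(\zeta)$ with $x_{n,k}=\delta_n[H_{n,k}(\zeta)+r_{n,k}(\zeta)]$, and expand $\log(1+x)=x-x^2/2+\mathcal R(x)$ with $|\mathcal R(x)|\le C|x|^3$ on $|x|\le1/2$. Unlike the Hellinger setting of Lemma~\ref{lem:appB-quadratic-LR}, the expansion here is of the density ratio itself, so there is no hidden half-drift in a remainder mean. The quadratic term comes directly from $-x^2/2$, and the remainder is exactly centred by \eqref{eq:appB-pred-r-centered}.

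\textbf{Uniform smallness.} Combine \eqref{eq:appB-pred-max-small} with the Sobolev bound on $r_{n,k}$. Using $\max_k\sup_K|\delta_n r_{n,k}|^2\le C_K\delta_n^2\sum_k\|r_{n,k}\|_{W^{1,2}(K)}^2$, whose expectation is $n\delta_n^2\cdot o(1)=o(1)$ by \eqref{eq:appB-pred-Sobolev-r}, we get $\max_k\sup_K|x_{n,k}|\to_P0$. The Taylor expansion therefore applies on an event of probability tending to one.

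\textbf{Linear term.} The linear term is $\delta_n\sum_kH_{n,k}+\delta_n\sum_kr_{n,k}$. The second sum is uniformly $o_P(1)$ by Lemma~\ref{lem:appB-pred-remainder}.

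\textbf{Quadratic term.} The quadratic term is $\frac{\delta_n^2}{2}\sum_k(H_{n,k}+r_{n,k})^2$. I would show the cross and $r^2$ parts are negligible uniformly in $\zeta$ by Cauchy--Schwarz. Here $\sup_K|H_{n,k}|\le C_K\|V_{n,k}\|$ and $\delta_n^2\sum_k\|V_{n,k}\|^2=O_P(1)$, since $G_n^{\rm pred}$ converges and conditional Lindeberg gives $\frac1n\sum_k\|V_{n,k}\|^2-\operatorname{tr}G_n^{\rm pred}\to_P0$ by the martingale weak law (Hall--Heyde). Likewise $\delta_n^2\sum_k\sup_K r_{n,k}^2=o_P(1)$ by the Sobolev envelope.

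\textbf{Cubic remainder.} The cubic remainder is bounded by $C\max_{k}\sup_K|x_{n,k}|\cdot\sup_K\sum_kx_{n,k}^2=o_P(1)$. This proves \eqref{eq:appB-pred-log-expansion} in $C(K)$.

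\textbf{Limit.} Since $H_{n,k}(\zeta)=b(\zeta)^\top V_{n,k}$, the expansion becomes $(\sqrt n\delta_n)b(\zeta)^\top\Delta_n^{\rm pred}-\frac{n\delta_n^2}{2}b(\zeta)^\top\widehat G_nb(\zeta)+o_P(1)$, where $\widehat G_n=\frac1n\sum_kV_{n,k}V_{n,k}^\top$. The main obstacle is replacing $\widehat G_n$ by the predictable Gram matrix $G_n^{\rm pred}$. Here I would invoke the martingale law of large numbers: under conditional Lindeberg \eqref{eq:appB-pred-Lindeberg}, $\widehat G_n-G_n^{\rm pred}\to_P0$ (truncate at $\eta\sqrt n$, control the truncated martingale differences in $L^2$, and control the tails by Lindeberg). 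Then \eqref{eq:appB-pred-Gram-limit} gives $\widehat G_n\to_PG_{\rm pred}$.

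\textbf{Conclusion.} With $\sqrt n\delta_n\to\tau$ and the martingale CLT \eqref{eq:appB-pred-CLT}, the log-likelihood process is, up to $o_P(1)$ in $C(K)$, a fixed continuous polynomial map of the finite-dimensional random pair $(\Delta_n^{\rm pred},\widehat G_n)$. Joint convergence holds since the second coordinate converges to a constant (Slutsky). The continuous mapping theorem then yields \eqref{eq:appB-pred-process-limit} in $C(K)$.
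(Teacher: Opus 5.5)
Your proposal is correct and follows essentially the same route as the paper's proof. The paper also uses a second-order expansion of $\log(1+\delta_n X_{n,k})$ on a high-probability event, Lemma~\ref{lem:appB-pred-remainder} for the linear remainder, Cauchy--Schwarz for the cross and $r^2$ terms, a max-times-sum bound for the cubic remainder, and finally the martingale CLT together with $\widehat G_n-G_n^{\rm pred}\to_P0$ and continuous mapping in $C(K)$. Your version is slightly more explicit than the paper's on the uniform smallness of the $r$-part and the tightness of $\delta_n^2\sum_k\|V_{n,k}\|^2$. One step needs tightening: the conditional Lindeberg and Gram hypotheses hold only in probability, so the martingale law of large numbers should use a Lenglart or stopping-time localization (as in Hall--Heyde) rather than plain $L^2$ bounds.
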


\begin{proof}
Write
\(X_{n,k}(\zeta)=H_{n,k}(\zeta)+r_{n,k}(\zeta)\).
Then
\(
R_{n,k}(\zeta)=1+\delta_nX_{n,k}(\zeta)
\).
On an event whose probability tends to one,
\(
\max_k\sup_{\zeta\in K}
|\delta_nX_{n,k}(\zeta)|
\leq1/2
\),
and
\[
  \log R_{n,k}
  =
  \delta_nX_{n,k}
  -
  \frac{\delta_n^2}{2}X_{n,k}^2
  +
  \mathcal R_{n,k},
\]
with
\(|\mathcal R_{n,k}|\leq C|\delta_nX_{n,k}|^3\).
A Lindeberg truncation yields
\(\sup_{\zeta\in K}\sum_{k=1}^n|\mathcal R_{n,k}(\zeta)|=o_P(1)\);
equivalently one may use
\(
\max_k\sup_K|\delta_nX_{n,k}|
\)
times
\(
\delta_n^2\sum_k\sup_K X_{n,k}^2
\),
which is tight under the stated moment and Sobolev-envelope conditions.

The linear contribution of $r_{n,k}$ is
$o_P(1)$ uniformly by
Lemma~\ref{lem:appB-pred-remainder}.
For the quadratic term,
Cauchy--Schwarz and
\eqref{eq:appB-pred-Sobolev-r}
give uniformly on $K$
\[
  \delta_n^2
  \sum_{k=1}^n
  \left(
    2H_{n,k}r_{n,k}
    +
    r_{n,k}^2
  \right)
  =
  o_P(1).
\]
This proves
\eqref{eq:appB-pred-log-expansion}.

For the first leading term,
\(\delta_n\sum_{k=1}^nH_{n,k}(\zeta)=(\sqrt n\,\delta_n)b(\zeta)^\top\Delta_n^{\rm pred}\),
which converges in $C(K)$ to
\(
\tau b(\zeta)^\top Z_{\rm pred}
\)
by
\eqref{eq:appB-pred-CLT}.
For the quadratic term, write
\[
  \frac1n
  \sum_{k=1}^n
  H_{n,k}(\zeta)^2
  =
  b(\zeta)^\top
  \widehat G_n^{\rm pred}
  b(\zeta),
\]
where
\(
\widehat G_n^{\rm pred}
=
n^{-1}\sum_kV_{n,k}V_{n,k}^\top
\).
The difference
\(\widehat G_n^{\rm pred}-G_n^{\rm pred}\)
is a matrix-valued martingale average and converges to zero in probability
under the conditional Lindeberg/moment assumptions.
Together with
\eqref{eq:appB-pred-Gram-limit}
and
\(
n\delta_n^2\to\tau^2
\),
this gives the second term in
\eqref{eq:appB-pred-process-limit}.
\end{proof}

Taking
\(V_{n,k}=(J_{-,n,k}^\perp,J_{+,n,k}^\perp)^\top\)
and
\(
G_{\rm pred}=G_{\rm pred}^{\rm eff}
\)
gives Theorem~\ref{thm:predictive-curved-LAN}.

\paragraph{Nuisance reduction and regime recovery.}
The finite-dimensional efficient reduction is the usual Gaussian quotient by
the nuisance tangent space and yields the Schur complement used in the main
text.  The corresponding predictive reduction and the algebraic recovery of
the three scaling regimes are collected in Supplementary Appendix~S2.

\section{Exact calculations for the CIR--OU benchmark}
\label{app:cir-ou-formulas}

This appendix collects the model-specific calculations used in the main
text.
All formulas are derived from the stationary Gaussian fast transition and
from the weak-texture limit of the CIR component.
The organization follows the principle used throughout the paper:
first identify the statistical structure, then evaluate the coefficients.

\subsection{Gaussian fast transition and an exponential-mode basis}
\label{appC:gaussian-fast-pair}

At the symmetric point $\lambda=0$, the observed pair in the strict
Gaussian-texture limit is a stationary complex OU pair
\begin{equation}
  (Y_0,Y_1)
  \sim
  \mathcal{CN}
  \left(
    0,
    \begin{pmatrix}
      1 & \rho\\
      \rho & 1
    \end{pmatrix}
  \right),
  \qquad
  \rho=e^{-c}\in(0,1),
  \label{eq:appC-complex-pair}
\end{equation}
where the convention is
\(
\mathbb E|Y_j|^2=1
\).
Diagonalize the covariance by
\begin{equation}
  W_+
  =
  \frac{Y_0+Y_1}{\sqrt{2(1+\rho)}},
  \qquad
  W_-
  =
  \frac{Y_0-Y_1}{\sqrt{2(1-\rho)}}.
  \label{eq:appC-Wpm}
\end{equation}
Then $W_+$ and $W_-$ are independent standard circular complex Gaussians.
Consequently,
\begin{equation}
  X_\pm=|W_\pm|^2
  \stackrel{\mathrm{iid}}{\sim}\operatorname{Exp}(1),
  \qquad
  U_\pm=X_\pm-1.
  \label{eq:appC-Upm}
\end{equation}
For a centered unit exponential variable $U=X-1$,
\begin{equation}
  \mathbb E[U^m]
  =
  \sum_{j=0}^m
  \binom mj
  (-1)^{m-j}j!,
  \label{eq:appC-centered-exp-moments}
\end{equation}
so in particular
\begin{equation}
  \mathbb E U=0,
  \quad
  \mathbb E U^2=1,
  \quad
  \mathbb E U^3=2,
  \quad
  \mathbb E U^4=9.
  \label{eq:appC-low-exp-moments}
\end{equation}
All moment identities below follow from
\eqref{eq:appC-centered-exp-moments} and independence of $U_+$ and $U_-$.

Let
\(
D=\partial_\beta
\)
with
\(
\beta=\log B_0
\).
Since
\(
c=B_0\Delta t/2
\),
\begin{equation}
  D\rho=-c\rho.
  \label{eq:appC-Drho}
\end{equation}
The two eigenvalue logarithmic derivatives are
\begin{equation}
  a_+
  =
  -\frac{c\rho}{1+\rho},
  \qquad
  a_-
  =
  \frac{c\rho}{1-\rho}.
  \label{eq:appC-apm}
\end{equation}
The fast-rate score is therefore
\begin{equation}
  q
  =
  a_+U_+
  +
  a_-U_-.
  \label{eq:appC-q}
\end{equation}
This representation immediately gives
\begin{equation}
  I(c)
  =
  \mathbb E[q^2]
  =
  a_+^2+a_-^2
  =
  \frac{
    2c^2\rho^2(1+\rho^2)
  }{
    (1-\rho^2)^2
  }.
  \label{eq:appC-I}
\end{equation}

To compute the first invariant quotient jet, differentiate the score at fixed
data.
Because
\(
D X_\pm=-a_\pm X_\pm
\),
one has
\begin{equation}
  D U_\pm
  =
  -a_\pm(U_\pm+1).
  \label{eq:appC-DU}
\end{equation}
Writing
\begin{equation}
  \dot a_\pm=Da_\pm,
  \label{eq:appC-adot-def}
\end{equation}
gives
\begin{equation}
  h
  :=
  Dq
  =
  \sum_{\sigma\in\{+,-\}}
  \left[
    (\dot a_\sigma-a_\sigma^2)U_\sigma
    -
    a_\sigma^2
  \right].
  \label{eq:appC-h}
\end{equation}
The second density derivative in the quotient direction is encoded by
\begin{equation}
  k
  =
  h-q+q^2.
  \label{eq:appC-k}
\end{equation}
Indeed,
\begin{equation}
  \frac{(D^2-D)p}{p}
  =
  h-q+q^2.
  \label{eq:appC-D2D-score}
\end{equation}

Using
\eqref{eq:appC-centered-exp-moments},
one obtains
\begin{align}
  \mathbb E[qk]
  &=
  -c\,I(c),
  \label{eq:appC-qk}
  \\
  M_2(c)
  :=
  \mathbb E[k^2]
  &=
  \frac{
    2c^4\rho^2
    \left(
      7\rho^6+19\rho^4+5\rho^2+1
    \right)
  }{
    (1-\rho^2)^4
  }.
  \label{eq:appC-M2}
\end{align}
Consequently,
\begin{equation}
  M_2(c)-c^2I(c)
  =
  \frac{
    4c^4\rho^4
    (\rho^2+3)(3\rho^2+1)
  }{
    (1-\rho^2)^4
  }
  >0.
  \label{eq:appC-quotient-Schur}
\end{equation}
Equations
\eqref{eq:appC-I}--\eqref{eq:appC-quotient-Schur}
prove the quotient Fisher formulas used in
Section~\ref{subsec:quotient-coordinate}.

\subsection{Weak-texture asymmetry and the odd observed jet}
\label{appC:weak-texture-odd-jet}

Set
\(
\varepsilon=\alpha^{-1/2}
\).
The stationary CIR variable satisfies
\(
x\sim\Gamma(\alpha,\text{rate }\alpha)
\).
It is useful first to consider
\begin{equation}
  Z=\frac{x-1}{\varepsilon}.
  \label{eq:appC-Z-standardized}
\end{equation}
A direct expansion of its stationary density around the standard Gaussian
density $\phi$ gives
\begin{equation}
  \frac{d\nu_\varepsilon^Z}{d\phi}(z)
  =
  1
  +
  \varepsilon
  \left(
    \frac{z^3}{3}-z
  \right)
  +
  O_K(\varepsilon^2)
  \label{eq:appC-Z-Edgeworth}
\end{equation}
uniformly for $z$ in every fixed compact set $K$.
The first local Edgeworth correction is odd.
No global $L^p(\phi)$ Radon--Nikodym claim is made here; the finite-texture
Gamma tails and the limiting Gaussian tails are not uniformly comparable in
that topology.

The coupling in Section~\ref{subsec:benchmark-dynamics} is written in the
standardized Lamperti coordinate, which is equivalent to the standardized
variable
\(
\sqrt{x}
\)
up to an affine factor.
Using
\begin{equation}
  \sqrt{x}
  =
  1
  +
  \frac{\varepsilon Z}{2}
  -
  \frac{\varepsilon^2Z^2}{8}
  +
  O(\varepsilon^3|Z|^3)
  \label{eq:appC-sqrtx-expansion}
\end{equation}
and standardizing its mean and variance gives
\begin{equation}
  z_\alpha
  =
  Z
  +
  \frac{\varepsilon}{4}
  (1-Z^2)
  +
  O_{L^p}(\varepsilon^2).
  \label{eq:appC-lamperti-standardized}
\end{equation}
After the change of variables
\eqref{eq:appC-lamperti-standardized},
the stationary density of $z_\alpha$ satisfies
\begin{equation}
  \frac{d\nu_\varepsilon}{d\phi}(z)
  =
  1
  +
  \frac{\varepsilon}{12}
  H_3(z)
  +
  O_K(\varepsilon^2),
  \qquad
  H_3(z)=z^3-3z,
  \label{eq:appC-lamperti-Edgeworth}
\end{equation}
uniformly on compact $z$-sets.
Thus the first local stationary correction is explicitly anti-invariant
under $z\mapsto-z$.
This provides a parity diagnostic consistent with the hidden-involution
mechanism of Section~\ref{subsec:anti-invariant-perturbation}, but it is not
used as a global $L^2$ Radon--Nikodym verification of that sufficient
criterion.

We now return to one stationary observed transition.
Define
\begin{equation}
  H
  =
  U_++U_-,
  \qquad
  t
  =
  a_++a_-.
  \label{eq:appC-H-t}
\end{equation}
Expanding the marginal observed likelihood jointly in the amplitude
perturbation
\(
\sqrt{x}=1+O(\varepsilon)
\)
and in the state-dependent fast-rate perturbation gives
\begin{equation}
  S_\lambda^Y(\varepsilon,0)
  =
  \varepsilon u(d)\,\mathfrak r
  +
  O_{L^2}(\varepsilon^2),
  \qquad
  u(d)=\frac{1-e^{-d}}{d},
  \label{eq:appC-odd-score}
\end{equation}
with the explicit odd observed direction
\begin{equation}
  \mathfrak r
  =
  qH-q-t.
  \label{eq:appC-r}
\end{equation}
One way to obtain \eqref{eq:appC-r} is to differentiate the complete
texture--fast likelihood first and then take its conditional expectation
given the observed pair at $\varepsilon=0$.
The conditional Gaussian contraction of the interval-averaged texture
produces the factor $u(d)$, while the remaining fast variables reduce to the
two independent exponential modes in
\eqref{eq:appC-Upm}.

The moment algebra is again finite.
From
\eqref{eq:appC-q},
\eqref{eq:appC-r} and
\eqref{eq:appC-centered-exp-moments},
\begin{align}
  \mathbb E[q\mathfrak r]
  &=
  I(c),
  \label{eq:appC-qr}
  \\
  \mathbb E[\mathfrak r^2]
  &=
  \frac{
    2c^2\rho^2(7\rho^2+5)
  }{
    (1-\rho^2)^2
  }.
  \label{eq:appC-r2}
\end{align}
Since the nuisance score is $q$, the efficient odd direction is
\begin{equation}
  \mathfrak r^\perp
  =
  \mathfrak r-q.
  \label{eq:appC-rperp}
\end{equation}
Similarly,
\eqref{eq:appC-qk} implies that the efficient even direction is
\begin{equation}
  k^\perp
  =
  k+cq.
  \label{eq:appC-kperp}
\end{equation}

Their exact Gram coefficients are
\begin{align}
  R(c)
  &:=
  \mathbb E[
    (\mathfrak r^\perp)^2
  ]
  =
  \frac{
    4c^2\rho^2(3\rho^2+2)
  }{
    (1-\rho^2)^2
  },
  \label{eq:appC-R}
  \\
  K(c)
  &:=
  \mathbb E[
    (k^\perp)^2
  ]
  =
  \frac{
    4c^4\rho^4
    (\rho^2+3)(3\rho^2+1)
  }{
    (1-\rho^2)^4
  },
  \label{eq:appC-K}
  \\
  C(c)
  &:=
  \mathbb E[
    \mathfrak r^\perp k^\perp
  ]
  =
  \frac{
    4c^3\rho^4(3\rho^2+5)
  }{
    (1-\rho^2)^3
  }.
  \label{eq:appC-C}
\end{align}
Therefore
\begin{equation}
  R(c)K(c)-C(c)^2
  =
  \frac{
    32c^6\rho^6
    (3\rho^4+2\rho^2+3)
  }{
    (1-\rho^2)^6
  }
  >0.
  \label{eq:appC-Gram-det}
\end{equation}
This proves
Proposition~\ref{prop:benchmark-gram-positive}.

The same calculation gives the first Fisher block at finite texture:
\begin{equation}
  g_F^{(\varepsilon)}
  =
  \begin{pmatrix}
    I(c)+O(\varepsilon^2)
    &
    \varepsilon u(d)I(c)+O(\varepsilon^2)
    \\
    \varepsilon u(d)I(c)+O(\varepsilon^2)
    &
    \varepsilon^2u(d)^2
    \mathbb E[\mathfrak r^2]
    +
    O(\varepsilon^3)
  \end{pmatrix}.
  \label{eq:appC-finite-epsilon-Fisher}
\end{equation}
Its efficient $\lambda$ information is
\begin{equation}
  I_{\lambda\lambda}^{\mathrm{eff}}
  =
  \varepsilon^2u(d)^2R(c)
  +
  o(\varepsilon^2),
  \label{eq:appC-finite-epsilon-efficient}
\end{equation}
which is strictly positive for sufficiently small nonzero $\varepsilon$.

Finally, combining the odd and even scores yields
\begin{equation}
  S_{\lambda,\mathrm{eff}}
  =
  \varepsilon u(d)\mathfrak r^\perp
  +
  \lambda V(d)k^\perp
  +
  o_{L^2}(\varepsilon+|\lambda|),
  \label{eq:appC-score-two-jets}
\end{equation}
where
\begin{equation}
  V(d)
  =
  \frac{2(d-1+e^{-d})}{d^2}.
  \label{eq:appC-Vd}
\end{equation}
Thus the benchmark identifications in
\eqref{eq:benchmark-Jpm-identification}
are
\begin{equation}
  J_-^\perp
  =
  u(d)\mathfrak r^\perp,
  \qquad
  J_+^\perp
  =
  V(d)k^\perp.
  \label{eq:appC-Jpm-benchmark}
\end{equation}

\subsection{The fourth-order even jet}
\label{appC:fourth-order-jet}

We now work in the strict Gaussian-texture limit.
Let $Z_t$ be stationary OU with
\begin{equation}
  \mathbb E[Z_sZ_t]
  =
  e^{-A|t-s|}.
  \label{eq:appC-Z-covariance}
\end{equation}
The normalized coupling is \(B_\lambda(Z_t)=B_0e^{\lambda Z_t-\lambda^2/2}\).
Conditioned on the texture path over one sampling interval, the fast
transition depends on the path only through
\begin{equation}
  A_\lambda
  =
  \frac1{\Delta t}
  \int_0^{\Delta t}
  e^{\lambda Z_s-\lambda^2/2}\,ds,
  \qquad
  \delta_\lambda
  =
  \log A_\lambda.
  \label{eq:appC-Alambda-delta}
\end{equation}
Hence
\begin{equation}
  p_{\beta,\lambda}
  =
  \mathbb E_Z
  \left[
    p_{\beta+\delta_\lambda}
  \right].
  \label{eq:appC-mixture-shift}
\end{equation}

Let $H_m$ denote the probabilists' Hermite polynomials and define the interval
averages
\begin{equation}
  M_m
  =
  \frac1{\Delta t}
  \int_0^{\Delta t}
  H_m(Z_s)\,ds.
  \label{eq:appC-Hermite-averages}
\end{equation}
Since \(e^{\lambda z-\lambda^2/2}=\sum_{m\geq0}\lambda^mH_m(z)/m!\),
a direct logarithmic expansion gives
\begin{align}
  \delta_\lambda
  ={}&
  \lambda M_1
  +
  \frac{\lambda^2}{2}
  (M_2-M_1^2)
  \notag\\
  &+
  \lambda^3
  \left(
    \frac{M_3}{6}
    -
    \frac{M_1M_2}{2}
    +
    \frac{M_1^3}{3}
  \right)
  \notag\\
  &+
  \lambda^4
  \left(
    \frac{M_4}{24}
    -
    \frac{M_1M_3}{6}
    -
    \frac{M_2^2}{8}
    +
    \frac{M_1^2M_2}{2}
    -
    \frac{M_1^4}{4}
  \right)
  +
  O_{L^p}(\lambda^5).
  \label{eq:appC-delta-expansion}
\end{align}

After rescaling time to $[0,1]$, let
\begin{equation}
  C_d(s,t)
  =
  e^{-d|s-t|},
  \qquad
  h_d(s)
  =
  \int_0^1
  C_d(s,t)\,dt.
  \label{eq:appC-C-h}
\end{equation}
The two basic Wick contractions are
\begin{align}
  V(d)
  &=
  \mathbb E[M_1^2]
  =
  \int_0^1h_d(s)\,ds
  =
  \frac{2(d-1+e^{-d})}{d^2},
  \label{eq:appC-V-integral}
  \\
  K_{112}(d)
  &:=
  \mathbb E[M_1^2M_2]
  =
  2
  \int_0^1
  h_d(s)^2\,ds
  \notag\\
  &=
  \frac{2}{d^3}
  \left(
    4d+2de^{-d}-7+8e^{-d}-e^{-2d}
  \right).
  \label{eq:appC-K112}
\end{align}
Moreover,
\begin{equation}
  \mathbb E[M_2^2]
  =
  2V(2d).
  \label{eq:appC-M2-Hermite}
\end{equation}
These identities follow immediately from Wick's formula and
\(
\mathbb E[H_m(Z_s)H_m(Z_t)]
=
m!\,C_d(s,t)^m
\).

For convenience set
\begin{equation}
  V=V(d),
  \qquad
  V_2=V(2d),
  \qquad
  K_{112}=K_{112}(d).
  \label{eq:appC-VV2K}
\end{equation}
Using
\eqref{eq:appC-delta-expansion}
and the Gaussian contractions gives
\begin{align}
  \mathbb E[\delta_\lambda]
  &=
  -\frac V2\lambda^2
  +
  a_1\lambda^4
  +
  O(\lambda^6),
  \label{eq:appC-Ed}
  \\
  \mathbb E[\delta_\lambda^2]
  &=
  V\lambda^2
  +
  a_2\lambda^4
  +
  O(\lambda^6),
  \label{eq:appC-Ed2}
  \\
  \mathbb E[\delta_\lambda^3]
  &=
  a_3\lambda^4
  +
  O(\lambda^6),
  \label{eq:appC-Ed3}
  \\
  \mathbb E[\delta_\lambda^4]
  &=
  3V^2\lambda^4
  +
  O(\lambda^6),
  \label{eq:appC-Ed4}
\end{align}
where
\begin{align}
  a_1
  &=
  -\frac{V_2}{4}
  +
  \frac{K_{112}}{2}
  -
  \frac{3V^2}{4},
  \label{eq:appC-a1}
  \\
  a_2
  &=
  \frac{V_2}{2}
  -
  \frac{3K_{112}}{2}
  +
  \frac{11V^2}{4},
  \label{eq:appC-a2}
  \\
  a_3
  &=
  \frac{3K_{112}}{2}
  -
  \frac{9V^2}{2}.
  \label{eq:appC-a3}
\end{align}

Expanding
\(
p_{\beta+\delta}
=
e^{\delta D}p_\beta
\)
inside
\eqref{eq:appC-mixture-shift}
gives
\begin{equation}
  p_{\beta,\lambda}
  =
  \left[
    1
    +
    \mathbb E[\delta_\lambda]D
    +
    \frac12
    \mathbb E[\delta_\lambda^2]D^2
    +
    \frac16
    \mathbb E[\delta_\lambda^3]D^3
    +
    \frac1{24}
    \mathbb E[\delta_\lambda^4]D^4
    +\cdots
  \right]
  p_\beta.
  \label{eq:appC-density-operator-expansion}
\end{equation}
Since
\(
\mu=\lambda^2/2
\),
the first quotient derivative is
\begin{equation}
  \left.
  \partial_\mu p
  \right|_{\mu=0}
  =
  V(D^2-D)p,
  \label{eq:appC-first-mu-jet}
\end{equation}
and the second quotient derivative is
\begin{equation}
  \left.
  \partial_{\mu\mu}^2p
  \right|_{\mu=0}
  =
  8
  \left[
    a_1D
    +
    \frac{a_2}{2}D^2
    +
    \frac{a_3}{6}D^3
    +
    \frac{V^2}{8}D^4
  \right]p.
  \label{eq:appC-second-mu-jet}
\end{equation}
In the limit $d\to0$,
\(
V\to1
\),
\(
V_2\to1
\)
and
\(
K_{112}\to2
\),
so
\begin{equation}
  \left.
  \partial_{\mu\mu}^2p
  \right|_{\mu=0}
  \longrightarrow
  (D^2-D)^2p.
  \label{eq:appC-short-time-check}
\end{equation}
This is the consistency check quoted in
Section~\ref{subsec:benchmark-higher-even-jet}.

\subsection{Hellinger immersion and exact quotient curvature}
\label{appC:quotient-curvature-calculation}

Set
\begin{equation}
  \nu=V(d)\mu.
  \label{eq:appC-nu}
\end{equation}
At the boundary $\nu=0$, the score vectors are
\begin{equation}
  S_\beta=q,
  \qquad
  S_\nu=k.
  \label{eq:appC-boundary-scores}
\end{equation}
Hence the boundary Fisher matrix in $(\beta,\nu)$ is
\begin{equation}
  g
  =
  \begin{pmatrix}
    I(c) & -cI(c)\\
    -cI(c) & M_2(c)
  \end{pmatrix}.
  \label{eq:appC-boundary-g}
\end{equation}

Use the Hellinger immersion
\begin{equation}
  \Phi(\theta)=2\sqrt{p_\theta}
  \label{eq:appC-Phi}
\end{equation}
into the real Hilbert space $L^2$.
Its first and second derivatives are
\begin{align}
  \partial_i\Phi
  &=
  \sqrt p\,S_i,
  \label{eq:appC-Phi-first}
  \\
  \partial_{ij}^2\Phi
  &=
  \sqrt p
  \left(
    \partial_jS_i
    +
    \frac12S_iS_j
  \right).
  \label{eq:appC-Phi-second}
\end{align}
Let
\(
\mathrm{II}_{ij}
\)
be the orthogonal projection of
\(
\partial_{ij}^2\Phi
\)
onto the normal complement of the tangent plane
\(
\operatorname{span}
\{\partial_\beta\Phi,\partial_\nu\Phi\}
\).
The Gauss equation in the ambient flat Hilbert space gives
\begin{equation}
  K_F
  =
  \frac{
    \langle
      \mathrm{II}_{\beta\beta},
      \mathrm{II}_{\nu\nu}
    \rangle
    -
    \|
      \mathrm{II}_{\beta\nu}
    \|^2
  }{
    \det g
  }.
  \label{eq:appC-Gauss}
\end{equation}

Equation
\eqref{eq:appC-second-mu-jet}
determines the only second derivative not already fixed by the ordinary
fast-rate family.
After the rescaling \eqref{eq:appC-nu}, the terms containing
\(
V(2d)
\)
and
\(
K_{112}(d)
\)
enter
\(
\partial_{\nu\nu}^2\Phi
\)
only through tangent terms and through a normal component generated by
\(
D(D^2-D)p
\).
The latter is orthogonal to the normal component of
\(
\partial_{\beta\beta}^2\Phi
\):
\begin{equation}
  \left\langle
    \mathrm{II}_{\beta\beta},
    \mathsf N
    \left(
      \sqrt p\,
      \frac{
        D(D^2-D)p
      }{p}
    \right)
  \right\rangle
  =
  0,
  \label{eq:appC-d-cancellation-identity}
\end{equation}
where $\mathsf N$ denotes normal projection.
Thus all $d$-dependence cancels from the numerator of
\eqref{eq:appC-Gauss}.

It remains to evaluate a finite set of moments of $U_+$ and $U_-$.
Using
\eqref{eq:appC-centered-exp-moments},
the reduction gives, with
\begin{equation}
  s=\rho^2=e^{-2c},
  \label{eq:appC-s}
\end{equation}
\begin{equation}
  K_F(s)
  =
  -
  \frac{
    45s^6
    +534s^5
    +1887s^4
    +3172s^3
    +2075s^2
    +422s
    +57
  }{
    4(1+s)^2(3+s)^2(1+3s)^2
  }.
  \label{eq:appC-KF}
\end{equation}
The denominator is positive and every coefficient in the numerator is
positive.
Therefore
\begin{equation}
  K_F(s)<0,
  \qquad
  0<s<1.
  \label{eq:appC-KF-negative}
\end{equation}
Moreover,
\begin{equation}
  \lim_{s\to1}K_F(s)=-2,
  \qquad
  \lim_{s\to0}K_F(s)=-\frac{19}{12}.
  \label{eq:appC-KF-limits}
\end{equation}
This proves
Theorem~\ref{thm:exact-quotient-curvature}.

\subsection{Predictive tangent recursions}
\label{app:predictive-CIROU}

We now derive the formulas used in the long-trajectory experiment.
Set
\begin{equation}
  \phi=e^{-d},
  \qquad
  u_d=\frac{1-\phi}{d},
  \qquad
  V_d=\frac{2(d-1+\phi)}{d^2},
  \label{eq:appC-pred-phi}
\end{equation}
and
\begin{equation}
  \rho=e^{-c},
  \qquad
  v=1-\rho^2,
  \qquad
  r_c=\frac{\rho^2}{v}.
  \label{eq:appC-pred-rho}
\end{equation}
Under the null fast model,
\begin{equation}
  \xi_k
  =
  \frac{
    Y_k-\rho Y_{k-1}
  }{
    \sqrt v
  }
  \label{eq:appC-pred-xi}
\end{equation}
is standard circular complex Gaussian and independent of
\(
\mathcal F^Y_{k-1}
\).
Define
\begin{equation}
  U_k=|\xi_k|^2-1,
  \qquad
  T_k
  =
  \frac{\rho}{\sqrt v}
  \operatorname{Re}
  (\xi_k^*Y_{k-1}).
  \label{eq:appC-pred-UT}
\end{equation}
The nuisance score is
\begin{equation}
  q_k
  =
  2c(r_cU_k-T_k).
  \label{eq:appC-pred-q}
\end{equation}
At fixed observations,
\begin{equation}
  \dot U_k
  :=
  \partial_\beta U_k
  =
  2cT_k
  -
  2cr_c(U_k+1).
  \label{eq:appC-pred-Udot}
\end{equation}

The first-order filter perturbations generated by the weak texture amplitude
and by the odd coupling response are represented by two scalar tangent states.
Differentiating the Gaussian prediction/update recursion at the symmetric
point gives
\begin{align}
  M_k
  &=
  \phi M_{k-1}
  +
  U_k
  +
  (1-\phi)T_k,
  \label{eq:appC-pred-M}
  \\
  N_k
  &=
  \phi N_{k-1}
  +
  u_dq_k.
  \label{eq:appC-pred-N}
\end{align}
For $d>0$, $|\phi|<1$, so these equations have unique stationary solutions
in every fixed $L^p$.
Explicitly,
\begin{align}
  M_k
  &=
  \sum_{j\geq0}
  \phi^j
  \left[
    U_{k-j}
    +
    (1-\phi)T_{k-j}
  \right],
  \label{eq:appC-pred-M-series}
  \\
  N_k
  &=
  u_d
  \sum_{j\geq0}
  \phi^jq_{k-j}.
  \label{eq:appC-pred-N-series}
\end{align}

Let
\begin{equation}
  h_k=\partial_\beta q_k,
  \qquad
  k_k=h_k-q_k+q_k^2.
  \label{eq:appC-pred-kk}
\end{equation}
A second differentiation of the predictive recursion gives the two singular
jets
\begin{align}
  J_{-,k}
  ={}&
  u_dq_kM_{k-1}
  +
  \left[
    \phi U_k-(1-\phi)T_k
  \right]N_{k-1}
  \notag\\
  &+
  u_d
  \left(
    q_kU_k+\dot U_k
  \right),
  \label{eq:appC-pred-Jminus}
  \\
  J_{+,k}
  ={}&
  V_dk_k
  +
  2u_dq_kN_{k-1}.
  \label{eq:appC-pred-Jplus}
\end{align}
Because these are derivatives of normalized conditional densities,
\begin{equation}
  \mathbb E[
    J_{-,k}
    \mid
    \mathcal F^Y_{k-1}
  ]
  =
  0,
  \qquad
  \mathbb E[
    J_{+,k}
    \mid
    \mathcal F^Y_{k-1}
  ]
  =
  0.
  \label{eq:appC-pred-centered}
\end{equation}

\subsection{Predictive non-degeneracy}
\label{appC:predictive-chaos}

Conditioning on $\mathcal F^Y_{k-1}$ and projecting the two predictive jets on
the third and fourth Gaussian innovation chaoses gives a $2\times2$
coefficient matrix with determinant
\begin{equation}
  -8c^3r_c^2u_dV_d,
  \label{eq:appC-chaos-det}
\end{equation}
which is nonzero for $d,c>0$.
Since the nuisance score contains only chaos orders one and two, this proves
that $q_k,J_{-,k},J_{+,k}$ are linearly independent in $L^2$ and hence that
\begin{equation}
  \det G_{\rm pred}^{\rm eff}>0.
  \label{eq:appC-pred-Gram-positive}
\end{equation}
This proves Proposition~\ref{prop:predictive-CIROU-nondegenerate}.
The explicit chaos projections and the two-equation elimination leading to
\eqref{eq:appC-chaos-det} are recorded in Supplementary Section~S3.

\subsection{Regularity and predictive remainder: statement}
\label{appC:regularity-summary}

For fixed $d,c>0$, the standardized CIR tangent hierarchy has uniformly bounded
moments at every finite order required above, and the stationary predictive
ratio satisfies, for every compact $K\subset\mathbb R$,
\begin{equation}
  \mathbb E\!\left[
    \left\|r_{k,\varepsilon,\cdot}\right\|_{W^{1,2}(K)}^2
  \right]
  \leq C_{K,d,c}\varepsilon^2.
  \label{eq:appC-predictive-W12-summary}
\end{equation}
The same argument gives the uniform $L^{2+\eta}$ bound used in the martingale
Lindeberg condition.  The proof combines fixed-$d$ spectral contraction,
stationary polynomial and inverse-moment bounds, exponentially small CIR
boundary tails, and a third-order Taylor expansion along
$\lambda=\varepsilon\zeta$; full estimates are given in Supplementary
Appendix~S3.  The constants are not uniform as $d\to0$, $c\to0$, or in the
layer $\alpha d=O(1)$.

\end{document}